\newcommand{\bbold}{\mathbb}
\def\R { {\bbold R} }
\def\Q { {\bbold Q} }
\def\Z { {\bbold Z} }
\def\N { {\bbold N} }
\def\T { {\bbold T} }
\def \ex{\operatorname{e}}
\renewcommand\epsilon{\varepsilon}
\def \<{\langle}
\def \>{\rangle}
\def \((  {(\!(}
\def \)) {)\!)}
\def \k {{{\boldsymbol{k}}}}
\DeclareMathSymbol{\precequ}{\mathrel}{symbols}{"16}
\DeclareMathSymbol{\succequ}{\mathrel}{symbols}{"17}
\newtheorem{theorem}{Theorem}[section]
\newtheorem{lemma}[theorem]{Lemma}
\newtheorem{prop}[theorem]{Proposition}
\newtheorem{cor}[theorem]{Corollary}
\theoremstyle{definition}
\theoremstyle{remark}
\newtheorem*{examples}{Examples}
\newtheorem*{notations}{Notations and conventions}
\newcommand{\abs}[1]{\lvert#1\rvert}
\def \Ga{\boldsymbol{\Gamma}}
\def \sc{\operatorname{sc}}
\let\oldi\i
\let\oldj\j
\renewcommand\i{\relax\ifmmode{\boldsymbol{i}}\else\oldi\fi}
\renewcommand\j{\relax\ifmmode{\boldsymbol{j}}\else\oldj\fi}
\renewcommand\leq{\leqslant}
\renewcommand\geq{\geqslant}
\renewcommand\preceq{\preccurlyeq}
\renewcommand\le{\leq}
\renewcommand\ge{\geq}
\DeclareMathAlphabet{\mathbf}{OML}{cmm}{b}{it}
\DeclareFontFamily{U}{fsy}{}
\DeclareFontShape{U}{fsy}{m}{n}{<->s*[.9]psyr}{}
\DeclareSymbolFont{der@m}{U}{fsy}{m}{n}
\DeclareMathSymbol{\der}{\mathord}{der@m}{182}
\DeclareSymbolFont{der@m}{U}{fsy}{m}{n}
\DeclareMathSymbol{\derdelta}{\mathord}{der@m}{100}
\DeclareSymbolFont{imag@m}{OT1}{cmr}{m}{ui}
\DeclareMathSymbol{\imag}{\mathord}{imag@m}{105}
\DeclareFontFamily{OMS}{smallo}{}
\DeclareFontShape{OMS}{smallo}{m}{n}{<->s*[.65]cmsy10}{}
\DeclareSymbolFont{smallo@m}{OMS}{smallo}{m}{n}
\DeclareMathSymbol{\smallo}{\mathord}{smallo@m}{79}
\DeclareFontFamily{OMS}{largerdot}{}
\DeclareFontShape{OMS}{largerdot}{m}{n}{<->s*[.8]cmsy10}{}
\DeclareSymbolFont{largerdot@m}{OMS}{largerdot}{m}{n}
\DeclareMathSymbol{\largerdot}{\mathord}{largerdot@m}{15}
\DeclareMathSymbol{\llambda}{\mathord}{der@m}{108}
\DeclareMathSymbol{\rrho}{\mathord}{der@m}{114}
\newcommand{\equationqed}[1]{\[\pushQED{\qed}#1 \qedhere\popQED\]\let\qed\relax}
\newcommand{\alignqed}[1]{\begin{align*}\pushQED{\qed} #1 \qedhere\popQED\end{align*}\let\qed\relax}
\newcommand{\dminus}{\mathbin{\text{\@dminus}}}
\newcommand{\@dminus}{%
  \ooalign{\hidewidth\raise1ex\hbox{\bf.}\hidewidth\cr$\m@th-$\cr}%
}
\begin{document}

\title{Revisiting Closed Asymptotic Couples}

\author[Aschenbrenner]{Matthias Aschenbrenner}
\address{Kurt G\"odel Research Center for Mathematical Logic\\
Universit\"at Wien\\
1090 Wien\\ Austria}
\email{matthias.aschenbrenner@univie.ac.at}

\author[van den Dries]{Lou van den Dries}
\address{Department of Mathematics\\
University of Illinois at Urbana-Cham\-paign\\
Urbana, IL 61801\\
U.S.A.}
\email{vddries@illinois.edu}

\author[van der Hoeven]{Joris van der Hoeven}
\address{CNRS, LIX, \'Ecole Polytechnique\\
91128 Palaiseau Cedex\\
France}
\email{vdhoeven@lix.polytechnique.fr}

\begin{abstract} Every discrete definable subset of a closed asymptotic couple with ordered scalar field $\k$ is shown to be contained in a finite-dimensional $\k$-linear subspace of that couple.  It follows that the differential-valued field $\T$ of transseries induces more structure on its value group
than what is definable in its asymptotic couple equipped with its
scalar multiplication by real numbers, where this asymptotic couple is construed as a two-sorted structure 
with $\R$ as the underlying set for the second sort. 
\end{abstract}

\date{March 2022. {\it Competing interests}\/: The authors declare none}

\subjclass[2020]{Primary 03C10, 03C64, 06F20; Secondary 34E05, 12H05, 12J25}

\maketitle

\section*{Introduction}\label{intro}

\noindent
The field of Laurent series with real coefficients comes with a natural derivation but is too small to be closed under integration and exponentiation. These defects are cured by passing to a certain canonical extension, the ordered differential field~$\T$ of transseries. Transseries are formal
series in an indeterminate $x>\R$, such as
\begin{align*}
   &   - 3 \ex^{\ex^x} + \ex^{\textstyle\frac{\ex^x}{\log x} +
  \frac{\ex^x}{\log^2 x} + \frac{\ex^x}{\log^3 x} + \cdots} - x^{11} + 7\\
  &   \hspace{2em} + \frac{\pi}{x} + \frac{1}{x \log x} + \frac{1}{x \log^2
  x} + \frac{1}{x \log^3 x} + \cdots \nonumber\\
  &   \hspace{2em} + \frac{2}{x^2} + \frac{6}{x^3} + \frac{24}{x^4} +
  \frac{120}{x^5} + \frac{720}{x^6} + \cdots \nonumber\\
  &   \hspace{2em} + \ex^{- x} + 2 \ex^{- x^2} + 3 \ex^{- x^3} + 4 \ex^{-
  x^4} + \cdots, \nonumber
\end{align*}
where $\log^2 x := (\log x)^2$, etc.~Transseries, that is, 
elements of $\T$,  
are also the {\em lo\-ga\-rith\-mic-exponential series\/}  ($\operatorname{LE}$-series, for short)
from \cite{DMM}; we refer to that paper, or to Appendix~A of our book~\cite{ADH}, for a detailed construction of $\T$.

What we need for now is that $\T$ is a real closed field extension
of the field $\R$ of real numbers and that $\T$ comes equipped with 
a distinguished element $x>\R$, an exponential operation $\exp\colon \T \to \T$ and a distinguished derivation $\der\colon \T\to \T$. The exponentiation here is an isomorphism of the ordered additive group of $\T$ onto the ordered multiplicative group
$\T^{>}$ of positive elements of $\T$; we set $\ex^f:=\exp(f)$
for $f\in\T$. The derivation~$\der$
comes from
differentiating a transseries termwise with respect to $x$, and
we set $f':= \der(f)$, $f'':= \der^2(f)$, and so on, 
for $f\in \T$;
thus $x'=1$, and $\der$
is compatible with exponentiation: $(\ex^f)'=f'\ex^f$ for $f\in \T$. Moreover, the constant field of $\T$ is $\R$, that is,
$\{f\in \T:\,f'=0\}=\R$; see again \cite{ADH} for details. 
Before stating our new results we introduce some conventions: 

\begin{notations}
Throughout, $m$,~$n$ range over $\N=\{0,1,2,\dots\}$. Ordered sets, ordered abelian groups, and ordered fields are totally ordered, by convention. Given an ambient ordered set $S$, a {\em downward closed subset of $S$}, also called a
{\em cut in $S$}, is a set $D\subseteq S$ such that for all
$a,b\in S$ with $a < b\in D$ we have~$a\in D$.
For an (additively written) ordered abelian group $\Gamma$ we set $$\Gamma^{\ne}\ :=\ \Gamma\setminus \{0\}, \qquad
 \Gamma^{<}\ :=\ \{\gamma\in \Gamma:\,\gamma<0\}, \qquad
 \Gamma^{>}\ :=\ \{\gamma\in \Gamma:\,\gamma>0\}.$$
For any field $K$ we let $K^\times=K\setminus \{0\}$ be its multiplicative group.
A {\em differential field\/} is
a field $K$ of characteristic $0$ with a derivation 
$\der\colon K \to K$, and we set $a':= \der(a)$ for $a\in K$, and
let $b^\dagger:=b'/b$ be the logarithmic derivative
of $b\in K^\times$ when the ambient differential field $K$ with its derivation $\der$ is
clear from the context; note that then $(ab)^\dagger=a^\dagger + b^\dagger$ for $a,b\in K^\times$. 
\end{notations}

\noindent
Our book~\cite{ADH} culminated in an elimination theory for the differential field $\T$ of transseries. As a consequence
we found that the induced structure on its constant field $\R$
is just its semialgebraic structure: if $X\subseteq \R^n$ is definable in $\T$, then $X$ is semialgebraic 
(in the sense of $\R$). (Here and throughout ``definable in $\mathbf{M}$'' means ``definable in $\mathbf{M}$ with parameters from $\mathbf{M}$''.) 

The story is more complicated for the structure induced by $\T$ on its value group. To explain this, we recall that
the natural valuation ring 
$$\mathcal{O}_{\T}\ =\ 
\big\{f\in \T:\, \text{$|f|\le r$ for some real $r>0$}\big\}$$
of the real closed field $\T$ is clearly $0$-definable in $\T$ {\em as a differential field}, which is how we construe $\T$ in the rest of
this paper. Let
$v\colon \T^\times \to \Gamma_{\T}$ be the corresponding valuation
on the field $\T$. We may consider $\Gamma_{\T}$ as the quotient
${\T^\times}\!/\negmedspace \asymp$ and $v$ as the natural map to this quotient
where $\asymp$ is a $0$-definable equivalence relation
on $\T^\times$. 

Thus $\Gamma_{\T}$ is part of $\T^{\operatorname{eq}}$.
What is the structure induced by $\T$ on $\Gamma_{\T}$? It includes the structure of $\Gamma_{\T}$ as an ordered (by convention, additively written) abelian group.
Moreover, the derivation of $\T$ induces a function
$\psi\colon \Gamma_{\T}^{\ne} \to \Gamma_{\T}$ by
$\psi(vf)=v(f^\dagger)$ for $f\in \T^\times$ with $vf\ne 0$.
The structure $(\Gamma_{\T}, \psi)$ consisting of the ordered abelian group $\Gamma_{\T}$ with the function $\psi$ is the {\em asymptotic couple\/} of $\T$, a notion introduced for
differential-valued fields---among which is $\T$---by Rosenlicht~\cite{R}. There is also a natural
$0$-definable scalar multiplication 
$$(r,\gamma)\mapsto r\gamma:\,\R\times \Gamma_{\T}\to \Gamma_{\T}$$
that makes $\Gamma_{\T}$ into a vector space over $\R$; it is given by $rv(f)=v(f^r)$ for $f\in \T^{>}$, and the reason it is $0$-definable 
(in $\T^{\operatorname{eq}}$) is that $r\alpha=\beta$ (for $r\in \R$ and
$\alpha, \beta\in \Gamma_{\T}$) iff there are $f,g\in \T^{\times}$ such that $\alpha=vf$, $\beta=vg$ and $rf^\dagger=g^\dagger$. For this reason we consider the $2$-sorted structure 
$\Ga_{\T}=\big((\Gamma_{\T},\psi), \R; \sc\!\big)$
consisting of the asymptotic couple $(\Gamma_{\T},\psi)$, 
the field $\R$, and the above scalar multiplication
$$\sc\colon \R\times \Gamma_{\T}\to \Gamma_{\T}, \quad 
\sc(r,\gamma)=r\gamma.$$ 
The basic elementary properties of this $2$-sorted structure
were determined in~\cite{AvdD}. This structure encodes 
important features of $\T$, and in this paper we prove
 a new result about it in Section~\ref{chha}:

\begin{theorem}\label{thm} Let $\Gamma_{\T}$ be equipped with its order topology, and let $X\subseteq \Gamma_{\T}$ be definable in 
$\Ga_{\T}$. Then the following are equivalent:\begin{enumerate}
\item[(i)] $X$ is contained in a finite-dimensional $\R$-linear subspace of $\Gamma_{\T}$;
\item[(ii)] $X$ is discrete;
\item[(iii)] $X$ has empty interior in $\Gamma_{\T}$.
\end{enumerate}
\end{theorem}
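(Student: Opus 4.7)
The plan is to invoke the quantifier elimination for the two-sorted structure $\Ga_{\T}$ established in \cite{AvdD}: every subset of $\Gamma_{\T}$ definable in $\Ga_{\T}$ is already defined by a quantifier-free formula in the language with $+$, $<$, $\psi$, and $\R$-scalar multiplication, using parameters from $\Gamma_{\T}$ and $\R$. With this in hand, the theorem is proved cyclically, with (ii)$\Rightarrow$(i) the main implication.

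For \textbf{(i)$\Rightarrow$(iii)}, it suffices to show that every finite-dimensional $\R$-subspace $V \subseteq \Gamma_{\T}$ has empty interior. Given $\alpha < \beta$ in $V$ and supposing $(\alpha,\beta) \subseteq V$, pick $\zeta \in \Gamma_{\T}$ whose archimedean class is absent from $V$ (possible since $V$ is finite-dimensional while $\Gamma_{\T}$ is not); then $\tfrac{\alpha+\beta}{2} + t\zeta$ lies in $(\alpha,\beta) \setminus V$ for all sufficiently small $t > 0$ in $\R$, a contradiction.

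For \textbf{(iii)$\Rightarrow$(ii)}, use the QE to decompose $X$ into finitely many cells defined by Boolean combinations of atomic formulas. A cell with empty interior is, by inspection of the allowed terms, a union of solution sets to $\R$-linear equations in $\psi$-iterates of a bounded finite collection of elements; any such set is readily seen to be discrete.

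The heart of the theorem is \textbf{(ii)$\Rightarrow$(i)}. Suppose $X$ is defined by a quantifier-free formula $\varphi(\gamma)$ over parameters $\gamma_1,\ldots,\gamma_n \in \Gamma_{\T}$ and reals. Let $V \subseteq \Gamma_{\T}$ be the $\R$-linear span of the $\gamma_i$ together with the $\psi$-values of the finitely many terms occurring as subterms of $\varphi$; this $V$ is finite-dimensional, because $\psi(r\gamma)=\psi(\gamma)$ for $r > 0$ ensures that $\R$-scaling introduces no new $\psi$-values. I claim $X \subseteq V$. Otherwise, pick $\eta \in X \setminus V$ and choose $\zeta \in \Gamma_{\T}$ whose archimedean class is strictly below that of every nonzero element of $V + \R\eta$. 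Then $\psi(\eta+t\zeta) = \psi(\eta)$ for all $t \in \R$, and by induction on term complexity every subterm of $\varphi$ evaluated at $\eta+t\zeta$ depends $\R$-affinely (and in fact continuously) on $t$. Hence $\{t \in \R : \varphi(\eta+t\zeta)\}$ is a subset of $\R$ definable in the ordered field of reals and contains $t=0$; since $\eta \notin V$, none of the atomic subformulas can collapse to an equation pinning $t$, so this set contains an open interval around $0$, producing a continuum of elements of $X$ clustering at $\eta$, contradicting discreteness. The main obstacle is this last step: one must rigorously control how the perturbation $\eta \mapsto \eta + t\zeta$ propagates through iterated applications of $\psi$ and $+$ in the QE normal form of $\varphi$, and verify that no atomic subformula degenerates into a condition singling out $t=0$. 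This requires careful bookkeeping of archimedean classes together with the defining axioms of a closed asymptotic couple with ordered scalar field $\R$.
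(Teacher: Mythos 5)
Your argument for (ii)$\Rightarrow$(i) has a genuine gap, and it sits exactly at the step you flag as ``the main obstacle'' --- but it is not a bookkeeping issue; the asserted claim is false. You say ``since $\eta\notin V$, none of the atomic subformulas can collapse to an equation pinning $t$.'' Take the formula $\varphi(y)\colon\ y+\psi(y)=\gamma_1$. Since $(\Gamma_{\T},\psi)$ has asymptotic integration, this defines a singleton $X=\{\eta\}$, hence discrete; but $\eta$ is typically not in $\R\gamma_1$. Choosing $\zeta$ with $[\zeta]_\R$ below every class in $[(\R\gamma_1+\R\eta)^{\ne}]_\R$ gives $\psi(\eta+t\zeta)=\psi(\eta)$ for all $t\in\R$, so $\varphi(\eta+t\zeta)$ reduces to $t\zeta=0$, i.e.\ $t=0$: the atomic subformula \emph{does} pin $t$, because $y$ can occur linearly outside any $\psi$. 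There is also a prior defect: your $V$ is not well defined, since ``the $\psi$-values of the subterms'' depend on the point at which they are evaluated. If you evaluate at $\eta$, then $V$ varies with $\eta\in X$; a formula like $(y+\psi(y)=\gamma_1)\vee(y+\psi(y)=\gamma_2)$ yields a two-point discrete $X$ contained in no single such $V_\eta$. So no finite-dimensional subspace can be read off from the formula in the way you propose.

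The paper proves (iii)$\Rightarrow$(i), and does so by an argument with a different global shape. It takes a $|\Gamma_{\T}|^{+}$-saturated elementary extension $\Ga^{*}$ (with scalar field $\k^{*}$) and shows $X^{*}\subseteq\Gamma_{\k^{*}}$; Lemma~\ref{ovsdis2} then converts this into (i) without ever exhibiting the subspace explicitly. The key tool is Lemma~\ref{keyinterval}, which rests on the detailed case analysis of simple extensions in Proposition~\ref{cases} and crucially uses saturation so that $\Gamma_{\k^{*}}^{<}$ is not cofinal in $(\Gamma^{*})^{<}$: any $\gamma^{*}\in X^{*}\setminus\Gamma_{\k^{*}}$ then comes with a whole open interval inside $X^{*}$, contradicting empty interior. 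That structural analysis of $\Gamma\langle\beta\rangle$ --- including the possibility of infinitely many nested $\psi$-iterates (case (b) of Proposition~\ref{cases}) --- is precisely what your inductive perturbation through subterms of $\varphi$ does not control. As a minor additional point, the elimination result (Theorem~\ref{theq}) is not a plain quantifier elimination in the language you describe: it reduces formulas to boolean combinations of atomic formulas and \emph{scalar formulas}, and the language carries an $H$-cut predicate $P$ which is essential.
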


\noindent
We also know from~\cite[Corollaries~14.3.10, 14.3.11]{ADH} that for any nonzero differential polynomial
$G(Y)\in \T\{Y\}$ the subset $\big\{vy:\,y\in \T^\times,\ G(y)=0\big\}$
of $\Gamma_{\T}$ is discrete. The set of zeros of $$G(Y):= Y^2Y'Y^{(3)}-Y^2(Y^{(2)})^2-Y(Y')^2Y^{(2)}+(Y')^4$$ in $\T$ is
$$\big\{a\ex^{b\ex^{cx}}:\,a,b,c\in \R\big\}\cup \big\{a\ex^{bx}:\,a,b\in \R\big\}.$$
For this $G$ the set $\big\{vy:\,y\in \T^\times,\ G(y)=0\big\}$ is not contained in a finite-dimensional $\R$-linear subspace of $\Gamma_{\T}$ and thus {\em not\/} 
definable in the $2$-sorted structure $\Ga_{\T}$
by the theorem above. We treat this example in more detail at the end of Section~\ref{prelim}.  

The authors of \cite{AvdD} had speculated that the subsets of 
$\Gamma_{\T}$ definable in $\T^{\operatorname{eq}}$
might be just those that are definable in the $2$-sorted
structure $\Ga_{\T}$. The above is a counter example,
but leaves open the possibility that
$\Gamma_{\T}$ is {\em stably embedded\/} in $\T^{\operatorname{eq}}$. In this connection we note that for all intents and purposes
we can replace the $2$-sorted structure
$\Ga_{\T}$ by the $1$-sorted structure $(\Gamma_{\T};\psi,\R 1,\sc)$
consisting of the asymptotic couple
$(\Gamma_{\T}; \psi)$ expanded by the set
$\R 1\subseteq \Gamma_{\T}$, where $1=v(x^{-1})\in \Gamma_{\T}^{>}$ is the unique fixed point of $\psi$, and by the function
$$\sc\ :\,(\R 1)\times \Gamma_{\T}\to \Gamma_{\T}, \quad \sc(r1,\gamma) := r\gamma.$$

\subsection*{Why revisit closed asymptotic couples?} The proof of Theorem~\ref{thm}  requires the results of \cite{AvdD}, suitably extended. This was our original motive for revisiting the subject of closed asymptotic couples. 
The theorem itself is of interest, but is also needed for its application to the
induced structure on the value group of $\T$. 

The quantifier elimination (QE) for closed asymptotic couples in \cite{AvdD} was expected to help in obtaining a
QE for $\T$. The latter is achieved in
\cite[Chapter 16]{ADH}, but there we needed only a key lemma from \cite{AvdD}, not its QE for closed asymptotic couples. 
That key lemma is \cite[Property B]{AvdD}, and is given a self-contained proof of five dense pages in \cite[Section~9.9]{ADH}. Since then we found a 
simpler way to obtain the QE in~\cite{AvdD} that does not use the key lemma alluded to, but depends on some easier-to-prove
new lemmas that have also other applications; see Section~\ref{ext}. This new proof of QE, given in Section~\ref{qe},
is another reason for revisiting the subject of closed asymptotic couples.
(We derive the ``key lemma'' itself as a routine consequence of the QE for closed asymptotic couples: Proposition~\ref{cac} below.) 

For his study of transexponential pre-$H$-fields in~\cite[Chapter~6]{NPC1} and~\cite{NPC2}, Nigel Pynn-Coates introduced a modified version of ``closed asymptotic couple''  and adapted accordingly some material from our 
(unpublished) 2017 version of this paper.  Getting the paper published is also more urgent now because in our recent proof that maximal Hardy fields are $\eta_1$  we use results from Section~\ref{sec:simple} below.

Finally, this paper gives us an opportunity to enhance and better organize parts of~\cite{AvdD}, and acknowledge gaps in some proofs there; we intend to close these gaps in a follow-up to the present paper. No familiarity with \cite{AvdD} is needed, but we do assume as background some 20 pages (mainly on asymptotic couples) 
from \cite{ADH}, namely parts of Section~2.4 on ordered abelian groups, Sections~6.5,~9.1~(subsection on asymptotic couples), 9.2 (first four pages), and 9.8. For the reader's convenience we also repeat definitions of key notions concerning asymptotic couples and $H$-fields.

\medskip\noindent
We thank Nigel Pynn-Coates for his careful reading of this paper, and corrections, and the referee for helpful comments.

\section{Preliminaries}\label{prelim}

\noindent
We only consider asymptotic couples of $H$-type, calling them
{\em $H$-couples\/} for brevity. Thus an $H$-couple is a pair
$(\Gamma,\psi)$ consisting of an ordered abelian group $\Gamma$
with a map $\psi\colon \Gamma^{\ne}\to \Gamma$, such that for all $\alpha,\beta\in \Gamma^{\ne}$, \begin{enumerate}
\item[(AC1)] $\alpha+\beta\ne 0\ \Longrightarrow\ \psi(\alpha+\beta)\ge \min\!\big(\psi(\alpha),\psi(\beta)\big)$;
\item[(AC2)] $\psi(k\alpha)=\psi(\alpha)$ for all $k\in \Z^{\ne}$;
\item[(AC3)] $\alpha>0\ \Longrightarrow 
\alpha+\psi(\alpha)> \psi(\beta)$;
\item [(HC)] $0<\alpha\le \beta\ \Longrightarrow\ \psi(\alpha)\ge \psi(\beta)$.
\end{enumerate}
(As an aside, note that (AC2) and (HC) together imply (AC1); had we observed this earlier, it would have shortened some arguments in \cite[Section~9.8]{ADH}; the reader can use it to the same effect in Section~\ref{ext} of the present paper.) 
Let $(\Gamma,\psi)$ be an $H$-couple. By (AC1) and (AC2) the function $\psi$ is a valuation on the
abelian group~$\Gamma$; as usual we extend $\psi$ to 
$\psi\colon \Gamma\to \Gamma_{\infty}:=\Gamma\cup \{\infty\}$ by $\psi(0):= \infty$; we use $\alpha^\dagger$ as an alternative notation for $\psi(\alpha)$ and set $\alpha':=\alpha+\alpha^\dagger$ 
for $\alpha\in \Gamma$. Also $\Psi:= \psi(\Gamma^{\ne})$. We recall from \cite[Corollary 9.2.16]{ADH} a basic trichotomy for $H$-couples which says that we are in
exactly one of the following three cases: \begin{itemize}
\item $(\Gamma,\psi)$ has a (necessarily unique) {\em gap}, that is, an element $\gamma\in \Gamma$
such that~$\Psi < \gamma< (\Gamma^{>})'$;
\item $(\Gamma,\psi)$ is {\em grounded}, that is, $\Psi$ has a largest element;
\item $(\Gamma,\psi)$ has {\em asymptotic integration}, that is,
$\Gamma=(\Gamma^{\ne})'$.
\end{itemize}
We say that $(\Gamma,\psi)$ is {\em closed\/} if $\Gamma$ is divisible, $\Psi\subseteq \Gamma$ is downward closed, and $(\Gamma,\psi)$ has asymptotic integration. We also use the qualifiers {\em having a gap}, {\em grounded}, {\em having asymptotic integration}, and {\em closed\/} for $H$-couples with extra structure. 

An {\bf $H$-cut\/} in $(\Gamma,\psi)$ is a downward closed set 
$P\subseteq \Gamma$ such that $\Psi\subseteq P<(\Gamma^{>})'$. 
The set $\Psi^{\downarrow}:=\{\alpha\in \Gamma:\, \text{$\alpha\le\beta$ for some $\beta\in \Psi$}\}$ is an $H$-cut in $(\Gamma,\psi)$, and
if $(\Gamma,\psi)$ is grounded or has asymptotic integration, this
is the only $H$-cut in $(\Gamma,\psi)$. If $(\Gamma,\psi)$
has a gap $\beta$, then $\Psi^{\downarrow}\cup \{\beta\}$ is the only other $H$-cut in $(\Gamma,\psi)$. 
 
In particular, if $(\Gamma,\psi)$ is closed, then $\Psi$ is the only $H$-cut in $(\Gamma,\psi)$, but in eliminating quantifiers for closed $H$-couples in Section~\ref{qe} it is essential to have
a predicate for this $H$-cut in our language.

\subsection*{Where do closed $H$-couples come from?}
We recall from \cite[Chapter~10]{ADH} that an {\it $H$-field}\/ is an ordered differential field $K$ with constant field~$C$ such that: \begin{enumerate}
\item[(H1)] $a'>0$ for all $a\in K$ with $a>C$;
\item[(H2)] $\mathcal{O}=C+\smallo$, where $\mathcal{O}$ is the
convex hull of $C$ in the ordered field $K$, and $\smallo$ is the maximal ideal of the valuation ring $\mathcal{O}$.
\end{enumerate}
Let $K$ be an $H$-field, and let $\mathcal{O}$ and $\smallo$ be as in (H2). Thus $K$ is a valued field with valuation ring $\mathcal{O}$. Let $v\colon K^\times \to \Gamma$ be the associated valuation. The value group~$\Gamma=v(K^\times)$ is made into an
$H$-couple $(\Gamma,\psi)$---the $H$-couple of $K$---by $\psi(vf):=v(f^\dagger)$ for $f\in K^\times$ with $vf\ne 0$. We call $K$ {\it Liouville closed}\/ if it is real closed and for all~$a\in K$ there exists $b\in K$ with $a=b'$ and also a $b\in K^\times$ such that $a=b^\dagger$. 

{\it If $K$ is Liouville closed, its $H$-couple is closed\/} as is easily verified. We recall from \cite{ADH} that 
$\T$ is a Liouville closed $H$-field.

\subsection*{Ordered vector spaces}
Throughout we let $\k$, $\k_0$, and $\k^*$ be ordered fields.
Recall that an {\it ordered vector space over $\k$\/} is an ordered 
abelian group
$\Gamma$ with a scalar multiplication $\k\times \Gamma\to \Gamma$
that makes $\Gamma$ into a vector space over $\k$ such that~$c\gamma>0$ for all $c\in \k^{>}$ and $\gamma\in \Gamma^{>}$.
Let $\Gamma$ be an ordered vector space over $\k$. Then any $\k$-linear subspace of $\Gamma$ is considered as an ordered vector space over $\k$ in the obvious way. We shall need the following easy result about $\Gamma$: 

\begin{lemma}\label{clovs} Let $\Gamma_0$ be a $\k$-linear subspace of $\Gamma$. Suppose $\Gamma$ contains an element $\varepsilon$ with $0 < \varepsilon < \Gamma_0^{>}$. Then $\Gamma_0$ is closed in $\Gamma$ with respect to the order topology on $\Gamma$.
\end{lemma}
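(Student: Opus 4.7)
\smallskip

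To show $\Gamma_0$ is closed, I will prove its complement is open. Fix $\gamma\in\Gamma\setminus\Gamma_0$; the goal is to produce an open interval around $\gamma$ disjoint from $\Gamma_0$.

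The key observation is that $\Gamma_0$ is ``uniformly discrete'' in $\Gamma$ at scale $\varepsilon$: if $\delta_1,\delta_2\in\Gamma_0$ are distinct, then $\delta_1-\delta_2\in\Gamma_0^{\ne}$, so $|\delta_1-\delta_2|\in\Gamma_0^{>}$, hence $|\delta_1-\delta_2|>\varepsilon$ by assumption on $\varepsilon$. In particular, the interval $I:=(\gamma-\varepsilon/2,\,\gamma+\varepsilon/2)$ meets $\Gamma_0$ in at most one point, since any two points of $\Gamma_0\cap I$ would lie within $\varepsilon$ of each other. (Note that $\varepsilon/2\in\Gamma$ exists since $\k$ has characteristic $0$ and $\Gamma$ is a $\k$-vector space.)

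Now split into two cases. If $\Gamma_0\cap I=\emptyset$, then $I$ itself is an open neighborhood of $\gamma$ disjoint from $\Gamma_0$, and we are done. Otherwise $\Gamma_0\cap I=\{\delta\}$ for a unique $\delta\in\Gamma_0$; since $\gamma\notin\Gamma_0$ we have $\gamma\ne\delta$, so $\eta:=|\gamma-\delta|>0$ and $\eta<\varepsilon/2$. Consider the smaller open interval $J:=(\gamma-\eta/2,\,\gamma+\eta/2)$. Then $J\subseteq I$, so any element of $\Gamma_0\cap J$ must equal $\delta$; but $|\delta-\gamma|=\eta>\eta/2$, so $\delta\notin J$. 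Hence $J\cap\Gamma_0=\emptyset$, and $J$ is the desired open neighborhood of $\gamma$.

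I do not foresee a significant obstacle here; the only subtlety is recognizing that the hypothesis $\varepsilon<\Gamma_0^{>}$ forces the points of $\Gamma_0$ to be separated by more than $\varepsilon$, which immediately confines $\Gamma_0\cap I$ to a single point and lets the two-step shrinking argument finish the job.
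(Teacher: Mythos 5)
Your proof is correct and takes essentially the same route as the paper's: observe that the interval around $\gamma$ can meet $\Gamma_0$ in at most one point (since distinct points of $\Gamma_0$ differ by more than $\varepsilon$), then shrink the interval to exclude that one point. The only difference is that you use radius $\varepsilon/2$ to make the ``at most one point'' step immediate, whereas the paper uses radius $\varepsilon$ and implicitly relies on $\Gamma_0$ being a $\k$-linear (hence $2$-divisible) subspace; both variants are fine.
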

\begin{proof} Let $\gamma\in \Gamma\setminus \Gamma_0$.
With $\varepsilon$ as in the hypothesis we observe that the in\-ter\-val~${(\gamma-\varepsilon, \gamma+\varepsilon)}$ can have at most one point in it from $\Gamma_0$, and so by decreasing
$\varepsilon$ we can arrange that $(\gamma-\varepsilon, \gamma+\varepsilon)\cap \Gamma_0=\emptyset$.
\end{proof}

\noindent
The {\bf $\k$-archi\-mede\-an class\/} of $\alpha\in \Gamma$ is
$$[\alpha]_{\k}\ :=\ \big\{\gamma\in \Gamma:\,
\text{$\abs{\gamma} \leq c\abs{\alpha}$ and 
$\abs{\alpha} \leq c\abs{\gamma}$ for some $c\in \k^{>}$}\big\}.$$ 
Let $[\Gamma]_{\k}$ be the set of $\k$-archimedean classes. Then $[\Gamma]_{\k}$ is a partition
of $\Gamma$, and we linearly order $[\Gamma]_{\k}$ by
\begin{align*}
[\alpha]_{\k}<[\beta]_{\k} &\quad:\Longleftrightarrow\quad 
\text{$c\abs{\alpha}<\abs{\beta}$ for all $c\in \k^{>}$} \\
&\quad\hskip0.3em \Longleftrightarrow\quad \text{$[\alpha]_{\k}\neq [\beta]_{\k}$ and $\abs{\alpha}<\abs{\beta}$.}
\end{align*} 
Thus $[0]_{\k}=\{0\}$ is the smallest $\k$-archimedean class.
For $\alpha,\beta\in\Gamma$, $c\in\k^\times$ we have~$[c\alpha]_{\k}=[\alpha]_{\k}$ and
$[\alpha+\beta]_{\k} \leq \max\big([\alpha]_{\k},[\beta]_{\k}\big)$, with equality if
$[\alpha]_{\k}\neq[\beta]_{\k}$.

\begin{lemma}\label{ovsdis1} Let $\Gamma\ne \{0\}$ be an ordered vector space over $\k$ such that $[\Gamma^{\ne}]_{\k}$ has no least element. Then every finite-dimensional $\k$-linear subspace of $\Gamma$ is discrete with respect to the order topology on $\Gamma$.
\end{lemma}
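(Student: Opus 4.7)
The plan is to reduce to showing that $0$ is isolated in $V$: since the order topology on $\Gamma$ is translation invariant and $V$ is a subgroup, it suffices to exhibit some $\varepsilon \in \Gamma^{>}$ with $(-\varepsilon, \varepsilon) \cap V = \{0\}$; translating by any $v \in V$ then gives $(v - \varepsilon, v + \varepsilon) \cap V = \{v\}$, showing that $V$ is discrete as a subset of $\Gamma$ with its order topology.

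The key combinatorial input is that for every finite-dimensional $\k$-subspace $V$ of $\Gamma$ the set $[V^{\ne}]_{\k}$ of $\k$-archimedean classes of nonzero elements of $V$ is \emph{finite}. I would prove this by induction on $n = \dim_{\k} V$, with $n = 0$ trivial. For the inductive step, pick a basis $e_1, \ldots, e_n$ of $V$ ordered so that $[e_1]_{\k} \le \cdots \le [e_n]_{\k}$. The properties of $[\cdot]_{\k}$ recalled just before the lemma give $[c_1 e_1 + \cdots + c_n e_n]_{\k} \le \max_{c_i \ne 0}[e_i]_{\k} \le [e_n]_{\k}$ for any $v = c_1 e_1 + \cdots + c_n e_n \in V^{\ne}$. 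Moreover the set
$$W\ :=\ \{v \in V : v = 0 \text{ or } [v]_{\k} < [e_n]_{\k}\}$$
is a $\k$-subspace of $V$ (closed under sum and scalar multiplication by the same properties) that does not contain $e_n$, so $\dim_{\k} W < n$. By induction $[W^{\ne}]_{\k}$ is finite, and hence so is $[V^{\ne}]_{\k} = [W^{\ne}]_{\k} \cup \{[e_n]_{\k}\}$.

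With finiteness in hand, let $[\alpha_0]_{\k}$ denote the least element of $[V^{\ne}]_{\k}$. The hypothesis that $[\Gamma^{\ne}]_{\k}$ has no least element supplies some $\varepsilon \in \Gamma^{>}$ with $[\varepsilon]_{\k} < [\alpha_0]_{\k}$. For any $w \in V^{\ne}$ we then have $[w]_{\k} \ge [\alpha_0]_{\k} > [\varepsilon]_{\k}$, which unfolds by the very definition of the ordering on $\k$-archimedean classes to $c\varepsilon < |w|$ for every $c \in \k^{>}$; specializing to $c = 1$ gives $|w| > \varepsilon$, so $w \notin (-\varepsilon, \varepsilon)$. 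Thus $(-\varepsilon, \varepsilon) \cap V = \{0\}$, which by the reduction above completes the proof. I do not expect any serious obstacle here: the only step with real content is the finiteness of $[V^{\ne}]_{\k}$, and even that is a short induction relying only on the basic algebraic behaviour of $\k$-archimedean classes.
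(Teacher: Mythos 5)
Your proof is correct and follows essentially the same route as the paper's. Both arguments hinge on the single key fact that a finite-dimensional $\k$-subspace can contain only finitely many $\k$-archimedean classes of nonzero elements, hence $[V^{\ne}]_{\k}$ has a least element, and then the hypothesis that $[\Gamma^{\ne}]_{\k}$ has no least element supplies the separating $\varepsilon$. The only differences are cosmetic: the paper obtains finiteness from the observation that elements with pairwise distinct archimedean classes are $\k$-linearly independent (so at most $\dim_\k V$ distinct classes can occur), which is shorter than your induction on dimension, and the paper argues directly at an arbitrary $\alpha\in\Delta$ rather than first reducing to isolating $0$ by translation invariance — but these are just two presentations of the same idea.
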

\begin{proof} First note that if $\gamma_1,\dots,\gamma_n\in \Gamma^{\ne}$ and $[\gamma_1]_{\k},\dots,[\gamma_n]_{\k}$ are distinct, then $\gamma_1,\dots,\gamma_n$ are $\k$-linearly independent. 
Thus for a finite-dimensional $\k$-linear subspace $\Delta\ne \{0\}$ of $\Gamma$ we can take $\delta\in \Delta^{\ne}$ such that $[\delta]_{\k}$ is minimal in $[\Delta^{\ne}]_{\k}$. Then
for any $\alpha\in \Delta$ and $\beta\in \Gamma^{\ne}$ with
$[\beta]_{\k}<[\delta]_{\k}$ we have $\alpha+\beta\notin \Delta$. 
\end{proof}

\noindent
Lemma~\ref{ovsdis1} takes care of the easy direction (i)~$\Rightarrow$~(ii) in Theorem~\ref{thm}. The direction~(ii)~$\Rightarrow$~(iii)
is trivial. The harder direction (iii)~$\Rightarrow$~(i) uses a generality on expanded vector spaces, to which we now turn. 

Let
$V$ be a vector space over a field $C$. We consider
the two-sorted structure~$(V,C;\sc)$ consisting of the abelian
group $V$, the field $C$, and the scalar multiplication 
$\sc\colon C\times V \to V$ of the vector space $V$. Let $X\subseteq V$.
Then we have the expansion $\mathbf{V}=\big((V,X),C;\sc\!\big)$ of 
$(V,C;\sc)$. Let $\mathbf{V}^*=\big((V^*, X^*), C^*;\sc\!\big)$ be an elementary extension of  $\mathbf{V}$. Let $C^* V$ be the
$C^*$-linear subspace of $V^*$ spanned by~$V$.

\begin{lemma}\label{ovsdis2} Assume $\mathbf{V}^*$ is $|V|^+$-saturated. Then $X$ is contained in a finite-dimensional $C$-linear subspace of $V$ if and only if $X^*\subseteq C^*V$.
\end{lemma}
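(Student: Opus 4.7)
The plan is to handle the two directions with standard model-theoretic arguments: the forward direction is pure elementarity, and the converse is $|V|^+$-saturation applied to an appropriate partial type over $V$.

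For the forward direction, suppose $X\subseteq Cv_1+\cdots+Cv_n$ for some $v_1,\dots,v_n\in V$. The sentence
\[ \forall y\,\bigl(y\in X\ \rightarrow\ \exists z_1,\dots,z_n\ y=\sc(z_1,v_1)+\cdots+\sc(z_n,v_n)\bigr), \]
with the $z_i$ ranging over the second sort, is true in $\mathbf{V}$ with parameters $v_1,\dots,v_n$, so by elementarity it holds in $\mathbf{V}^*$, where its second-sort quantifiers range over $C^*$. Hence $X^*\subseteq C^*v_1+\cdots+C^*v_n\subseteq C^*V$.

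For the converse I argue by contrapositive. Assume $X$ is not contained in any finite-dimensional $C$-linear subspace of $V$; I will produce an element of $X^*\setminus C^*V$. Consider the partial type $p(y)$ in a variable $y$ of sort $V$, with parameter set $V$, consisting of the formula $y\in X$ together with, for every $n\ge 1$ and every $v_1,\dots,v_n\in V$, the formula
\[ \neg\,\exists z_1,\dots,z_n\ y=\sc(z_1,v_1)+\cdots+\sc(z_n,v_n) \]
where the $z_i$ are of sort $C$. A finite subset of $p(y)$ mentions only finitely many parameters $v_1,\dots,v_N\in V$, and by hypothesis the finite-dimensional subspace $Cv_1+\cdots+Cv_N$ fails to contain $X$, so any $x\in X$ outside this span realizes the finite subset. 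The parameter set $V$ has cardinality $|V|$, so $|V|^+$-saturation of $\mathbf{V}^*$ yields a realization $x^*\in X^*$ of $p(y)$; because $\exists z_i$ in $\mathbf{V}^*$ quantifies over $C^*$, the element $x^*$ avoids $C^*v_1+\cdots+C^*v_n$ for every finite tuple $v_1,\dots,v_n\in V$, which is exactly to say $x^*\notin C^*V$.

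The only delicate point---and the one place where the two-sorted setup genuinely helps---is that the scalar quantifier $\exists z_i$, being over the second sort in the language of $\mathbf{V}$, automatically becomes $\exists z_i\in C^*$ after passing to the elementary extension. This converts $C$-linear non-containments in $\mathbf{V}$ into $C^*$-linear non-containments in $\mathbf{V}^*$ at no extra cost, and it is what lets the saturation argument run with parameter set $V$ alone, rather than with $V\cup C^*$, whose cardinality could easily exceed $|V|$.
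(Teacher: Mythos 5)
Your proof is correct and is essentially the paper's own argument, just written out in more detail: elementarity for the forward direction, and for the converse a contrapositive plus a partial type over $V$ (of size $|V|$) witnessed as finitely satisfiable by the hypothesis, then realized using $|V|^+$-saturation. The observation at the end about the scalar quantifier ranging over $C^*$ is the same point the paper leaves implicit.
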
 
\begin{proof} If $X\subseteq Cv_1+\cdots + Cv_n$, $v_1,\dots, v_n\in V$, then $X^*\subseteq C^*v_1+\cdots + C^*v_n\subseteq C^*V$.
We prove the contrapositive of the other direction, so assume
$X\not\subseteq Cv_1+\cdots +Cv_n$ for all $v_1,\dots, v_n\in V$. Then $X^*\not\subseteq C^*v_1+\cdots +C^*v_n$ for all $v_1,\dots, v_n\in V$, and so by saturation we get an element of $X^*$ that does not lie in $C^*V$.
\end{proof}

\noindent
For certain $(V,C;\sc)$ this will be applied to sets $X\subseteq V$ that are definable
in a suitable expansion of $(V,C;\sc)$, with $X^*$ the corresponding set in an elementary extension of that expansion.

\subsection*{$H$-couples over ordered fields}
Ordered vector spaces come into play as follows. Let $K$ be a Liouville closed $H$-field. It has the (ordered) constant field 
$C$, and the $H$-couple $(\Gamma,\psi)$. We have a map 
$(c,\gamma)\mapsto c\gamma\colon C\times \Gamma\to \Gamma$ such that
$cvf=vg$ whenever $f,g\in K^\times$ and 
$cf^\dagger=g^\dagger$. This map makes $\Gamma$ into an ordered vector space over $C$, and $\psi(c\gamma)=\psi(\gamma)$
for all $c\in C^\times$ and $\gamma\in \Gamma^{\ne}$. 

Accordingly, we define an $H$-couple over $\k$ to be an $H$-couple
$(\Gamma,\psi)$ where the ordered abelian group $\Gamma$ is also
equipped with a map $\k\times \Gamma\to \Gamma$ making
$\Gamma$ into an ordered vector space over $\k$ such that
$\psi(c\gamma)=\psi(\gamma)$ for all $c\in \k^\times$ and $\gamma\in \Gamma^{\ne}$. Thus the $H$-couple of a Liouville closed
$H$-field is naturally an $H$-couple over its constant field.

\medskip\noindent
Let $(\Gamma,\psi)$ be an $H$-couple over $\k$. A basic fact is that for distinct
$\alpha,\beta\in \Gamma^{\ne}$ we have 
$\big[\psi(\alpha)-\psi(\beta)\big]_{\k} < [\alpha-\beta]_{\k}$,
since for all $c\in \k^{>}$ we have $\psi(\alpha)-\psi(\beta)=\psi(c\alpha)-\psi(c\beta)=o\big(c(\alpha-\beta)\big)$, by \cite[6.5.4(ii)]{ADH}. 
Note also that for all $\alpha,\beta\in \Gamma^{\ne}$, 
$$[\alpha]_{\k}=[\beta]_{\k}\ \Longrightarrow\ \psi(\alpha)=\psi(\beta).$$

\subsection*{Hahn spaces} These are the ordered Hahn spaces from
\cite[Section~2.4]{ADH}: a Hahn space $\Gamma$ over $\k$ is an ordered vector space over $\k$ such that for all~$\alpha,\beta\in \Gamma^{\neq}$ with~$[\alpha]_{\k}=[\beta]_{\k}$ there exists 
$c\in \k^\times$ such that $[\alpha-c\beta]_{\k}<[\alpha]_{\k}$. 

\begin{examples}
\mbox{}

\begin{enumerate}
\item Any $1$-dimensional ordered vector space over $\k$ is a Hahn space over $\k$.
\item Any $\k$-linear subspace of a Hahn space over $\k$ is a Hahn space over $\k$.
\item Any ordered vector space over the ordered field $\R$ is a Hahn space over
$\R$.
\item The ordered $\Q$-vector space $\Q+\Q\sqrt 2 \subseteq \R$ is not a
Hahn space over $\Q$.

\end{enumerate}
\end{examples}

\noindent
We say that an $H$-couple $(\Gamma,\psi)$ over $\k$ is of {\bf Hahn type\/} if for all  $\alpha,\beta\in \Gamma^{\ne}$ with~$\psi(\alpha)=\psi(\beta)$ there exists a scalar $c\in \k$ such that $\psi(\alpha-c\beta)> \psi(\alpha)$; equivalently, 
$\Gamma$ is a Hahn space over $\k$ and for all $\alpha,\beta\in \Gamma^{\ne}$, 
$$\psi(\alpha)=\psi(\beta)\ \Longrightarrow\ 
[\alpha]_\k=[\beta]_\k.$$ 
Let $K$ be a Liouville closed $H$-field. We made its $H$-couple $(\Gamma, \psi)$ into an $H$-couple over its constant field $C$, and
as such $(\Gamma,\psi)$ is of Hahn type.

\subsection*{Details on the example in the introduction} We consider the Liouville closed $H$-field $\T$ and its element 
$x$ with $x'=1$. For $z\in \T$ with $z'\notin \R$ we have
\begin{align*} zz''\ =\ (z')^2\ &\Longleftrightarrow\ z^\dagger\ =\ (z')^\dagger\ \Longleftrightarrow\ (z'/z)^\dagger\ =\ 0\ \Longleftrightarrow\ 
z'=tz \text{ for some }t\in \R^\times\\
&\Longleftrightarrow\ z=s\ex^{tx} \text{ for some }s,t\in \R^\times.
\end{align*}
Considering also the case where $z'\in \R$ we conclude that
$$\big\{z\in \T:\, zz''=(z')^2\big\}\ =\ \big\{s\ex^{tx}:\, s,t\in \R\big\}.$$
Next, let $y\in \T^\times$ and suppose $z:=y^\dagger$ satisfies
$zz''=(z')^2$. Then $y=r\ex^{u}$ for some~$r\in \R$ and $u\in \T$ with $u'=z$. For $z=s\ex^{tx}$ with $s,t\in \R$ and
$u\in \T$, $u'=z$ we get $u\in \R\ex^{tx}+\R$ if $t\ne 0$, and
$u\in \R x+\R$ if $t=0$. Hence $y=a\ex^{b\ex^{cx}}$ or $y=a\ex^{bx}$ for some $a,b,c\in \R$. From $zz''=(z')^2$ we get
$$y^2y'y^{(3)}-y^2(y^{(2)})^2-y(y')^2y^{(2)}+(y')^4=0.$$
In this way we get for $$G(Y):=Y^2Y'Y^{(3)}-Y^2(Y^{(2)})^2-Y(Y')^2Y^{(2)}+(Y')^4$$ that its set of zeros in $\T$ is
$$\big\{a\ex^{b\ex^{cx}}:\, a,b,c\in \R\big\}\cup 
\big\{a\ex^{bx}:\, a,b\in \R\big\}.$$ 
It is easy to see that
for $0<c < d$ in $\R$ we have $\big[v(\ex^{\ex^{cx}})\big]_{\R} < \big[v(\ex^{\ex^{dx}})\big]_{\R}$, so the set~$\big\{vy:\, y\in \T^\times,\ G(y)=0\big\}$ is not contained in a
finite-dimensional $\R$-linear subspace of $\Gamma_{\T}$.

\section{Extensions of $H$-couples}\label{ext} 

\noindent {\em In this section 
$(\Gamma,\psi)$  and $(\Gamma_1, \psi_1)$ are 
$H$-couples over $\k$}.  An
{\bf embedding} $$h \colon\ (\Gamma,\psi)
\to (\Gamma_1,\psi_1)$$ is an embedding $h \colon\Gamma\to\Gamma_1$
of ordered vector spaces over $\k$
such that $$ h\bigl(\psi(\gamma)\bigr)\ =\ \psi_1\bigl(h(\gamma)\bigr)\ 
\text{ for $\gamma\in\Gamma^{\ne}$.}$$
If $\Gamma\subseteq \Gamma_1$ and the inclusion $\Gamma\hookrightarrow\Gamma_1$ is 
an embedding $(\Gamma,\psi)
\to (\Gamma_1,\psi_1)$, then we call~$(\Gamma_1,\psi_1)$ an {\bf extension} of $(\Gamma,\psi)$. If $(\Gamma_1, \psi_1)$ is of Hahn type and extends 
$(\Gamma, \psi)$, then $(\Gamma, \psi)$ is of Hahn type.

\subsection*{Embedding lemmas} The lemmas in this subsection are the analogues for $H$-couples over $\k$ of similar lemmas for
$H$-couples 
in \cite[Section~9.8]{ADH}. The proofs are essentially the same, 
so we omit them.

\begin{lemma}\label{extension1}
Let $\beta$ be a gap in $(\Gamma, \psi)$. Then there is an 
$H$-couple ${(\Gamma+\k \alpha,
\psi^{\alpha})}$ over $\k$ that extends $(\Gamma,\psi)$ such that: \begin{enumerate}
\item[\textup{(i)}] $\alpha>0$ and $\alpha'=\beta$;
\item[\textup{(ii)}] if $i\colon (\Gamma,\psi)\to (\Gamma_1,\psi_1)$ is an embedding and $\alpha_1\in\Gamma_1$, $\alpha_1>0$, $\alpha_1'=i(\beta)$, then
$i$ extends uniquely to an embedding $j\colon\bigl(\Gamma+\k \alpha,
\psi^{\alpha}\bigr)\to (\Gamma_1,\psi_1)$ with~$j(\alpha)=\alpha_1$.
\end{enumerate}
\end{lemma}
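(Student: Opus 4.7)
The plan is to realize $(\Gamma+\k\alpha, \psi^{\alpha})$ concretely as $\Gamma_1 := \Gamma \oplus \k\alpha$ (abstract direct sum of $\k$-vector spaces with $\alpha$ a new formal basis vector), and to equip $\Gamma_1$ with the unique ordered $\k$-vector space structure extending that of $\Gamma$ in which $[\alpha]_\k$ is a fresh $\k$-archimedean class placed strictly below $[\gamma]_\k$ for every $\gamma \in \Gamma^{\ne}$. Concretely: $c\alpha + \gamma > 0$ iff $\gamma > 0$, or $\gamma = 0$ and $c > 0$. On the function side I would declare
\[
\psi^\alpha(c\alpha) \,=\, \beta - \alpha \quad (c \in \k^{\ne}), \qquad \psi^\alpha(c\alpha + \gamma) \,=\, \psi(\gamma) \quad (c \in \k,\ \gamma \in \Gamma^{\ne}).
\]
This choice is motivated by $\alpha + \psi^\alpha(\alpha) = \beta$ and by the fact that $c\alpha + \gamma$ and $\gamma$ lie in the same $\k$-archimedean class when $\gamma \ne 0$, together with the general principle that $\psi$ is constant on $\k$-archimedean classes.

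Next I would verify the axioms for $(\Gamma_1, \psi^\alpha)$. (AC2) is immediate from the definition. For (HC) and (AC3), the only genuinely new cases are those in which one or both arguments involve $\alpha$. In each such case I reduce to an inequality in $\Gamma$ using that $|c\alpha|$ is smaller than every positive element of $\Gamma$: for (HC) the $\alpha$-component is negligible, and for (AC3) the key inequality $c\alpha + \gamma + \psi(\gamma) > \psi^\alpha(\delta)$ ($\delta \in \Gamma_1^{\ne}$) follows from $\gamma + \psi(\gamma) > \beta > \psi(\delta')$ for $\delta' \in \Gamma^{\ne}$ together with archimedean smallness of $c\alpha$. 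I would invoke the observation recorded in the excerpt that (AC2) and (HC) together imply (AC1), sparing one case.

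The universal property (ii) is where the gap hypothesis enters in its sharpest form. Given an embedding $i\colon(\Gamma,\psi)\to(\Gamma_1',\psi_1')$ and $\alpha_1 \in \Gamma_1'$ with $\alpha_1 > 0$ and $\alpha_1' = i(\beta)$, there is a unique $\k$-linear extension $j$ of $i$ with $j(\alpha) = \alpha_1$. To see that $j$ is order-preserving (hence an embedding of ordered $\k$-vector spaces) I would show $[\alpha_1]_\k < [i(\gamma)]_\k$ for every $\gamma \in \Gamma^{\ne}$, equivalently $\alpha_1 < i(\gamma)$ for every $\gamma \in \Gamma^{>}$. The point is that the map $\delta \mapsto \delta'$ is strictly increasing on the positive cone of any $H$-couple over $\k$: for $0 < \delta_1 < \delta_2$, $\delta_2' - \delta_1' = (\delta_2 - \delta_1) + \bigl(\psi(\delta_2) - \psi(\delta_1)\bigr)$, and the second summand has strictly smaller $\k$-archimedean class than the first. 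Hence if $\alpha_1 \geq i(\gamma)$ for some $\gamma \in \Gamma^{>}$, then $i(\beta) = \alpha_1' \geq i(\gamma')$, forcing $\beta \geq \gamma'$ and contradicting $\beta < (\Gamma^{>})'$. A parallel case analysis then shows $j$ commutes with $\psi^\alpha$, and uniqueness of $j$ is built in to the $\k$-linearity together with $j(\alpha) = \alpha_1$.

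The main obstacle here is not any single clever step but patient bookkeeping in the case analysis for (HC), (AC3), and the verification that $j$ respects $\psi^\alpha$. No new idea beyond the arguments already carried out in the $\Z$-coefficient version in \cite[Section~9.8]{ADH} is needed: one simply replaces integer multiples by general $\k$-scalar multiples throughout and uses the $\k$-archimedean classes $[\,\cdot\,]_\k$ in place of the absolute-value comparisons from the integer case.
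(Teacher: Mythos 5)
Your proposal is correct and follows exactly the route the paper intends: the paper omits this proof, stating only that it is the analogue of the $\Z$-coefficient construction in \cite[Section~9.8]{ADH}, and your concrete realization (fresh minimal $\k$-archimedean class for $\alpha$, with $\psi^\alpha(c\alpha)=\beta-\alpha$ and $\psi^\alpha(c\alpha+\gamma)=\psi(\gamma)$ for $\gamma\neq 0$) agrees verbatim with the explicit formulas the paper records immediately after the lemma. The key steps you highlight — the strict monotonicity of $\delta\mapsto\delta'$ to get $\alpha_1<i(\Gamma^{>})$ from $\alpha_1'=i(\beta)<i\big((\Gamma^{>})'\big)$, and constancy of $\psi$ on $\k$-archimedean classes to check commutation with $\psi^\alpha$ — are precisely what the ADH argument uses, transposed from $\Z$ to $\k$.
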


\noindent
The universal property (ii) determines $(\Gamma+\k\alpha, \psi^{\alpha})$ 
up to isomorphism over $(\Gamma, \psi)$, and
$0<c\alpha < \Gamma^{>}$ for all $c\in \k^{>}$; moreover,
for all $\gamma\in \Gamma$ and $c\in \k$ with $\gamma+c\alpha\ne 0$,
\begin{equation}\label{eq:psialpha}
\psi^{\alpha}(\gamma+c \alpha)\ =\ \begin{cases}
\psi(\gamma), &\text{if $\gamma\neq 0$,}\\
\beta-\alpha, &\text{otherwise.}
\end{cases} 
\end{equation}
Note also that 
$[\Gamma+ \k\alpha]_{\k}=[\Gamma]_{\k} \cup \big\{[\alpha]_{\k}\big\}$, so for $\Psi^{\alpha}:=\psi^{\alpha}\big((\Gamma+\k\alpha)^{\ne}\big)$ we have:
\begin{equation}\label{eq:Psialpha} \Psi^{\alpha}\ =\ \Psi\cup\{\beta-\alpha\}, \qquad  \max\Psi^{\alpha}\ =\ \psi^{\alpha}(\alpha)\ =\ \beta-\alpha.
\end{equation}
Lemma~\ref{extension1} goes through with $\alpha < 0$ and $\alpha_1 <0$ in place of $\alpha >0$ and $\alpha_1 > 0$, respectively. In the setting of this modified lemma
we have $\Gamma^{<} < c\alpha < 0$
for all~$c\in \k^{>}$, \eqref{eq:psialpha} goes through for $\gamma\in \Gamma$ and $c\in \k$ with $\gamma + c\alpha \ne 0$, \eqref{eq:Psialpha} goes through. So we have two ways to
remove a gap. 
Removal of a gap as above leads by \eqref{eq:Psialpha} to a grounded $H$-couple over $\k$, and this is the situation we consider next. 

\begin{lemma}\label{extas2} Assume that $\Psi$ has a largest element $\beta$. 
Then there exists an 
$H$-couple~${(\Gamma+\k \alpha,
\psi^{\alpha})}$ over $\k$ that extends $(\Gamma,\psi)$ with $\alpha\ne 0$, $\alpha'=\beta$, such that for any embedding~$i\colon (\Gamma,\psi)\to (\Gamma_1,\psi_1)$ and any $\alpha_1\in\Gamma_1^{\ne}$ with $\alpha_1'=i(\beta)$ there is a unique extension of~$i$ to an embedding $j\colon(\Gamma+\k \alpha,
\psi^{\alpha})\to (\Gamma_1,\psi_1)$ with $j(\alpha)=\alpha_1$.
\end{lemma}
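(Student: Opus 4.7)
The plan is to imitate the construction of the analogous lemma for $H$-couples in \cite[Section~9.8]{ADH}: build $\Gamma+\k\alpha$ with a prescribed placement of the new element $\alpha$, define $\psi^\alpha$ by an explicit formula, and then verify both the $H$-couple axioms and the universal property.

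Before constructing anything, two features of $\alpha$ are already forced. Axiom (AC3) together with $\beta\in\Psi$ rules out $\alpha>0$ (else $\beta=\alpha'>\Psi$, contradicting $\beta\in\Psi$), so necessarily $\alpha<0$. The same axiom, applied to both $\gamma$ and $-\gamma$ in $\Gamma^{\ne}$, also shows that no element of $\Gamma^{\ne}$ can satisfy $\gamma'=\beta$: the positive case gives $\gamma'>\Psi\ni\beta$ directly, while for $\gamma<0$ one gets $\psi(\gamma)=\beta-\gamma>\beta$, contradicting $\beta=\max\Psi$. Hence $\alpha\notin\Gamma$.

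Next I would set up $\Gamma\oplus\k\alpha$ as a $\k$-vector space and order it so that $\alpha<0$ with $[\alpha]_\k<[\Gamma^{\ne}]_\k$; this is the natural placement, because any attempt to make $\alpha$ large in $\k$-archimedean class would force $\psi^\alpha(\alpha)=\beta-\alpha$ to violate (HC) against small positive elements of $\Gamma$. I then define
\[
\psi^\alpha(\gamma+c\alpha)\ :=\ \begin{cases}\psi(\gamma),&\gamma\ne 0,\\ \beta-\alpha,&\gamma=0,\ c\in\k^\times.\end{cases}
\]
The verifications of (AC1)--(AC3), (HC), and scalar compatibility reduce to routine case analysis, using $[\gamma+c\alpha]_\k=[\gamma]_\k$ whenever $\gamma\ne 0$, the inequality $\beta-\alpha>\beta\ge\psi(\gamma')$ for $\gamma'\in\Gamma^{\ne}$, and the axioms for $(\Gamma,\psi)$ itself.

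For the universal property, given $i\colon(\Gamma,\psi)\to(\Gamma_1,\psi_1)$ and $\alpha_1\in\Gamma_1^{\ne}$ with $\alpha_1'=i(\beta)$, the only candidate is the $\k$-linear map $j$ sending $\gamma+c\alpha$ to $i(\gamma)+c\alpha_1$, and uniqueness is automatic. The two forcing arguments above apply verbatim in $\Gamma_1$ to give $\alpha_1<0$ and $\alpha_1\notin i(\Gamma)$, so $j$ is injective. The main obstacle---and the only step in which the hypothesis $\alpha_1'=i(\beta)$ enters nontrivially---is to show that $[\alpha_1]_\k<[i(\gamma)]_\k$ in $\Gamma_1$ for every $\gamma\in\Gamma^{\ne}$; once this is in hand, sign-preservation of $j$ is immediate and $\psi$-preservation on mixed elements follows from the basic implication $[\delta_1]_\k=[\delta_2]_\k\Rightarrow\psi_1(\delta_1)=\psi_1(\delta_2)$ valid in every $H$-couple over $\k$. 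To prove the key inequality, suppose instead that $-\alpha_1\ge c\cdot i(\gamma)>0$ for some $c\in\k^{>}$ and $\gamma\in\Gamma^{>}$; then (HC) in $\Gamma_1$ yields $\psi_1(\alpha_1)=\psi_1(-\alpha_1)\le\psi_1(i(c\gamma))=i(\psi(\gamma))\le i(\beta)$, and combining with $\alpha_1+\psi_1(\alpha_1)=i(\beta)$ gives $\alpha_1\ge 0$, contradicting $\alpha_1<0$.
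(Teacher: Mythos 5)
Your proof is correct and follows essentially the same route as the paper intends: the paper omits the proof, referring to the analogous argument in \cite[Section~9.8]{ADH}, and your argument is the natural transcription of that construction to the setting over $\k$, with $\alpha<0$, $[\alpha]_\k<[\Gamma^{\ne}]_\k$, and $\psi^\alpha$ given by the displayed formula. The two forcing observations at the start and the key inequality $[\alpha_1]_\k < [i(\gamma)]_\k$ in the universal property are exactly where the substance lies, and you handle them correctly.
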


\medskip\noindent
Let $(\Gamma+\k\alpha, \psi^{\alpha})$ be as in Lemma~\ref{extas2}. Then $\Gamma^{<} < c\alpha < 0$
for all~$c\in \k^{>}$,
$[\Gamma+\k\alpha]_{\k}=[\Gamma]_{\k} \cup \big\{[\alpha]_{\k}\big\}$, so \eqref{eq:Psialpha}
holds for $\Psi^{\alpha}:=\psi^{\alpha}\big((\Gamma+\k\alpha)^{\ne}\big)$. 
Thus our new $\Psi$-set $\Psi^{\alpha}$ still has a maximum, but
this maximum is larger than the maximum $\beta$ of the original 
$\Psi$-set $\Psi$. By iterating this construction indefinitely
and taking a union, we obtain an
$H$-couple over $\k$ with asymptotic integration.

Once we have an $H$-couple over $\k$ with asymptotic integration, we can create an extension with a gap as follows:

\begin{lemma}\label{addgap} Suppose that $(\Gamma, \psi)$ has asymptotic 
integration. 
Then there is an $H$-couple 
$(\Gamma+ \k\beta, \psi_{\beta})$ over $\k$ extending $(\Gamma, \psi)$ such that: \begin{enumerate}
\item[\textup{(i)}] $\Psi< \beta < (\Gamma^{>})'$;
\item[\textup{(ii)}] for any $(\Gamma_1, \psi_1)$ extending 
$(\Gamma, \psi)$ and $\beta_1\in \Gamma_1$ with  $\Psi< \beta_1 < (\Gamma^{>})'$
there is a unique embedding 
$(\Gamma+ \k\beta, \psi_{\beta})\to (\Gamma_1, \psi_1)$ of 
$H$-couples over $\k$ that is the 
identity on $\Gamma$ and sends $\beta$ to $\beta_1$. 
\end{enumerate}
\end{lemma}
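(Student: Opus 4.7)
The plan is to construct $(\Gamma+\k\beta, \psi_\beta)$ explicitly, mirroring the constructions of Lemmas~\ref{extension1} and~\ref{extas2} but with $\beta$ placed in the $H$-cut rather than being determined by a $\psi$-equation.

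First I take $\Gamma_\beta := \Gamma \oplus \k\beta$ as a $\k$-vector space. The key preliminary observation is that both $P := \Psi^{\downarrow}$ and its complement $(\Gamma^{>})'$ are $\k^{>}$-invariant: $c\Psi = \Psi$ for $c \in \k^\times$ since $\psi$ is $\k^\times$-invariant, and up/downward closure is preserved by multiplication by positive scalars. Using this I extend the order to $\Gamma_\beta$ by declaring, for $\gamma \in \Gamma$ and $c \in \k^\times$, that $\gamma + c\beta > 0$ iff either $c > 0$ and $-\gamma/c \in P$, or $c < 0$ and $-\gamma/c \notin P$. A routine check yields an ordered $\k$-vector space extending $\Gamma$, with $\Psi < \beta < (\Gamma^{>})'$ by construction.

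Next I exploit asymptotic integration to show $\Gamma = \Psi^{\downarrow} \sqcup (\Gamma^{>})'$: every $\gamma \in \Gamma$ equals $\alpha'$ for some $\alpha \in \Gamma^{\ne}$; if $\alpha > 0$ then $\gamma \in (\Gamma^{>})'$, while if $\alpha < 0$ then $\gamma = \alpha + \psi(-\alpha) < \psi(-\alpha)$ puts $\gamma$ in $\Psi^{\downarrow}$ (disjointness follows from (AC3)). Combined with the $\k^{>}$-invariance of both pieces, this forces $[\beta]_\k$ to be a new $\k$-archimedean class: if $[\beta]_\k = [\gamma]_\k$ for some $\gamma \in \Gamma^{\ne}$, suitable positive scalar multiples of $\gamma$ would have to straddle the cut, while $\k^{>}$-invariance keeps them on the same side of the partition. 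Hence $[\Gamma_\beta]_\k = [\Gamma]_\k \cup \{[\beta]_\k\}$.

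I then define $\psi_\beta$ on $\Gamma_\beta^{\ne}$ by extending $\psi$ from $\Gamma^{\ne}$ and setting, for $\gamma \in \Gamma$ and $c \in \k^\times$,
\[
\psi_\beta(\gamma + c\beta)\ :=\
\begin{cases}
\psi(\gamma) & \text{if } \gamma \ne 0 \text{ and } [\gamma]_\k > [\beta]_\k, \\
\psi_\beta(\beta) & \text{otherwise,}
\end{cases}
\]
where $\psi_\beta(\beta)$ is a specific element to be chosen. The choice is forced by (ii): given $(\Gamma_1, \psi_1)$ and $\beta_1$ realizing the cut, the unique $\k$-linear extension $j\colon \Gamma_\beta \to \Gamma_1$ of the identity on $\Gamma$ with $j(\beta) = \beta_1$ must send $\psi_\beta(\beta)$ to $\psi_1(\beta_1)$, which pins $\psi_\beta(\beta)$ down to one canonical value. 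I then verify (AC1)--(AC3), (HC) and $\k^\times$-invariance for $(\Gamma_\beta, \psi_\beta)$ by a case analysis using the new $\k$-class of $\beta$, and derive the universal property: uniqueness of $j$ from $\k$-linearity, and existence by checking that $j$ preserves the order (via $\k^{>}$-invariance of $P$ and the fact that $\beta_1$ realizes the cut) as well as $\psi$ (via the choice of $\psi_\beta(\beta)$).

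The main obstacle is the identification of $\psi_\beta(\beta)$. Unlike in Lemma~\ref{extension1}, where the relation $\alpha' = \beta$ immediately forces $\psi^{\alpha}(\alpha) = \beta - \alpha$, here $\beta$ is not the solution of any $\psi$-equation, and the canonical value of $\psi_\beta(\beta)$ has to be extracted from the asymptotic integration hypothesis by a careful analysis of what $\psi_1(\beta_1)$ must be for \emph{every} cut-realizer $\beta_1$ in \emph{every} extension $(\Gamma_1, \psi_1)$; this is the $\k$-analog of the gap-creation argument in \cite[Section~9.8]{ADH}.
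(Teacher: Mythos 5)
Your ``key preliminary observation'' is incorrect, and this error invalidates the entire construction. The $\k^\times$-invariance of $\psi$ means $\psi(c\gamma)=\psi(\gamma)$ for $c\in\k^\times$, i.e.~$\psi$ is constant on $\k^\times$-orbits of its \emph{domain}; it does not imply $c\Psi=\Psi$, i.e.~that the \emph{image} set $\Psi$ (or its downward closure $\Psi^{\downarrow}$, or the complementary set $(\Gamma^{>})'$) is stable under scaling by positive constants. In fact this invariance fails already in $\Gamma_{\T}$: the fixed point $1=v(x^{-1})$ lies in $\Psi$, but $2\cdot 1=1'\in(\Gamma^{>})'$ is \emph{not} in $\Psi^{\downarrow}$, so multiplying by $c=2$ carries an element of $\Psi^{\downarrow}$ across the cut.

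Because of this, your conclusion that $[\beta]_{\k}$ is a \emph{new} $\k$-archimedean class is the opposite of the truth. Asymptotic integration gives, for every $\varepsilon\in\Gamma^{>}$, elements $\varepsilon^\dagger\in\Psi$ and $\varepsilon'\in(\Gamma^{>})'$ with $\varepsilon'-\varepsilon^\dagger=\varepsilon$; so the cut $\Psi^{\downarrow}<\beta<(\Gamma^{>})'$ has arbitrarily small width, whence $\Gamma$ is \emph{dense} in $\Gamma+\k\beta$ and no new archimedean classes arise, i.e.~$[\Gamma+\k\beta]_{\k}=[\Gamma]_{\k}$. This is exactly what Corollary~\ref{addgapcor} records immediately after the lemma. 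Once density is in hand, the definition of $\psi_\beta$ is forced and requires no mysterious ``canonical value $\psi_\beta(\beta)$'': for each nonzero $\gamma+c\beta\in\Gamma+\k\beta$, pick $\delta\in\Gamma^{\ne}$ with $[\delta]_{\k}=[\gamma+c\beta]_{\k}$ (possible by density) and set $\psi_\beta(\gamma+c\beta):=\psi(\delta)$; this is well-defined since in an $H$-couple $\psi$ is constant on $\k$-archimedean classes, and the axioms and the universal property then follow as in the $\k=\Q$ case of \cite[Section~9.8]{ADH}. Your proposal, as written, would produce an ordered vector space and a function $\psi_\beta$ of the wrong isomorphism type (with an extra archimedean class and with $\Psi_\beta\ne\Psi$), so the gap here is not a missing detail about $\psi_\beta(\beta)$ but a wrong structural picture of the extension.
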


\noindent
Let $(\Gamma, \psi)$ and $(\Gamma+ \k\beta, \psi_{\beta})$ be as in Lemma~\ref{addgap}. If 
$(\Gamma + \k\alpha, \psi_{\alpha})$ is also an $H$-couple over 
$\k$
extending $(\Gamma, \psi)$ with $\Psi< \alpha < (\Gamma^{>})'$, then
by (ii) we have an isomorphism~$(\Gamma+ \k\beta, \psi_{\beta})\to (\Gamma+ \k\alpha, \psi_{\alpha})$ 
of $H$-couples over $\k$
that is the identity on $\Gamma$ and sends $\beta$ to $\alpha$.
In this sense, $(\Gamma + \k\beta, \psi_{\beta})$ is unique up to isomorphism
over~$(\Gamma, \psi)$. The construction of $(\Gamma+ \k\beta, \psi_{\beta})$ gives the following extra information,
with $\Psi_{\beta}$ the set of values of $\psi_{\beta}$ on 
$(\Gamma+ \k\beta)^{\ne}$: 

\begin{cor}\label{addgapcor} The set $\Gamma$ is dense 
in the ordered abelian group 
$\Gamma+ \k\beta$, so 
$[\Gamma]_{\k} = [\Gamma+ \k\beta]_{\k}$, 
$\Psi_{\beta}=\Psi$ and $\beta$ is a gap in 
$(\Gamma+ \k\beta, \psi_{\beta})$.  
\end{cor}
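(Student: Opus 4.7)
The plan is to deduce all four claims of the corollary from two key observations, both rooted in asymptotic integration together with the cut condition $\Psi<\beta<(\Gamma^{>})'$.

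\emph{Observation 1.} For every $\gamma_1\in\Gamma^{>}$ we have $\psi(\gamma_1)<\beta<\psi(\gamma_1)+\gamma_1=\gamma_1'$, so $\beta-\psi(\gamma_1)\in(0,\gamma_1)$. Since $\psi(\gamma_1/c)=\psi(\gamma_1)$ for every $c\in\k^{>}$, applying the same estimate with $\gamma_1/c$ in place of $\gamma_1$ yields $\beta-\psi(\gamma_1)<\gamma_1/c$ for every $c\in\k^{>}$, whence $[\beta-\psi(\gamma_1)]_{\k}<[\gamma_1]_{\k}$.

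\emph{Observation 2.} The positive cone $\Gamma^{>}$ is coinitial in $(\Gamma+\k\beta)^{>}$: for every $\epsilon>0$ in the extension there is $\gamma\in\Gamma^{>}$ with $\gamma<\epsilon$. Suppose not, so some $0<\epsilon=\eta_0+t\beta<\Gamma^{>}$ with $\eta_0\in\Gamma$ and $t\in\k^{\times}$. Asymptotic integration gives the disjoint decomposition $\Gamma=\Psi^{\downarrow}\sqcup(\Gamma^{>})'$, with $\Psi$ having no maximum and $(\Gamma^{>})'$ having no minimum. For $t>0$, the conditions $\epsilon>0$ and $\epsilon<\gamma_2$ (for every $\gamma_2\in\Gamma^{>}$) translate via the cut for $\beta$ into $-\eta_0/t\in\Psi^{\downarrow}$ and $\Gamma\cap(-\eta_0/t,\infty)\subseteq(\Gamma^{>})'$, forcing $\max\Psi=-\eta_0/t$; the case $t<0$ is symmetric and forces a minimum of $(\Gamma^{>})'$. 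Either outcome contradicts asymptotic integration.

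Density of $\Gamma$ in $\Gamma+\k\beta$ now follows at once: for any $\delta=\gamma_0+r\beta$ and $\epsilon>0$ in the extension, Observation~2 produces $\gamma_1\in\Gamma^{>}$ with $|r|\gamma_1<\epsilon$, and then $\gamma_0+r\psi(\gamma_1)\in\Gamma$ lies within $\epsilon$ of $\delta$ by Observation~1. The equality $[\Gamma+\k\beta]_{\k}=[\Gamma]_{\k}$ is similar: given nonzero $\delta=\gamma_0+r\beta$ with $r\ne 0$, if some $\gamma_1\in\Gamma^{>}$ has $[\gamma_1]_{\k}<[\delta]_{\k}$ then $\gamma:=\gamma_0+r\psi(\gamma_1)\in\Gamma$ satisfies $[\delta-\gamma]_{\k}=[r(\beta-\psi(\gamma_1))]_{\k}<[\gamma_1]_{\k}<[\delta]_{\k}$, so $[\delta]_{\k}=[\gamma]_{\k}\in[\Gamma^{\ne}]_{\k}$; otherwise coinitiality yields $\gamma\in\Gamma^{>}$ with $\gamma<|\delta|$, and the monotonicity $0<\alpha\le|\delta|\Rightarrow[\alpha]_{\k}\le[\delta]_{\k}$ combined with the case assumption $[\gamma]_{\k}\ge[\delta]_{\k}$ forces $[\gamma]_{\k}=[\delta]_{\k}$.

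The remaining assertions are short consequences. For $\Psi_{\beta}=\Psi$: the inclusion $\Psi\subseteq\Psi_{\beta}$ is clear, and for the reverse any $\delta\in(\Gamma+\k\beta)^{\ne}$ admits $\gamma\in\Gamma^{\ne}$ with $[\delta]_{\k}=[\gamma]_{\k}$ by the previous step, so the implication $[\alpha]_{\k}=[\beta]_{\k}\Rightarrow\psi(\alpha)=\psi(\beta)$ recalled in Section~\ref{prelim} gives $\psi_{\beta}(\delta)=\psi(\gamma)\in\Psi$. For $\beta$ being a gap: $\Psi_{\beta}=\Psi<\beta$ comes from the previous step and the construction, and for $\beta<((\Gamma+\k\beta)^{>})'$ take $\delta>0$ in the extension, write $\psi_{\beta}(\delta)=\psi(\gamma)$ with $[\gamma]_{\k}=[\delta]_{\k}$, and apply Observation~1 to $|\gamma|$ to obtain $[\beta-\psi(\gamma)]_{\k}<[\gamma]_{\k}=[\delta]_{\k}$; since both quantities are positive, this forces $\beta-\psi(\gamma)<\delta$, i.e., $\beta<\delta+\psi_{\beta}(\delta)$. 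The main obstacle is Observation~2: it is the unique step where asymptotic integration enters through the full partition $\Gamma=\Psi^{\downarrow}\sqcup(\Gamma^{>})'$ with both extremal conditions, and everything else is then downstream bookkeeping.
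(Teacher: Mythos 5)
Your proof is correct and, as far as I can tell, more explicit than what the paper intends: Lemma~\ref{addgap} is stated with its proof omitted (referring to the analogous argument in \cite[Section~9.8]{ADH}), and the corollary is then offered as ``extra information'' read off from that construction. You instead derive everything abstractly from Lemma~\ref{addgap}(i) (the cut condition $\Psi<\beta<(\Gamma^{>})'$), (HC), and the trichotomy, without unpacking the construction of $\psi_\beta$. That is a legitimate and arguably cleaner route, and the bookkeeping all checks out.

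Two small remarks on presentation. First, in Observation~2 you assert that under asymptotic integration ``$(\Gamma^{>})'$ has no minimum'' without justification; since the negation of coinitiality also allows $\epsilon\in\Gamma^{>}$ (in which case $\epsilon=\min\Gamma^{>}$), you are tacitly using the equivalent fact that $\Gamma^{>}$ has no minimum. That fact is true and quick — if $\alpha_0=\min\Gamma^{>}$, then (HC) gives $\psi(\alpha_0)=\max\Psi$, so $(\Gamma,\psi)$ would be grounded, contradicting asymptotic integration by the trichotomy — but it is doing real work at two places (ruling out $\epsilon\in\Gamma$ in Observation~2, and the $t<0$ case), so it deserves a sentence. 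Second, once density is in hand, the equality $[\Gamma+\k\beta]_{\k}=[\Gamma]_{\k}$ follows more directly than your two-case argument: given $\delta\ne 0$ in $\Gamma+\k\beta$, density with $\epsilon=|\delta|/2$ produces $\gamma\in\Gamma$ with $|\delta-\gamma|<|\delta|/2$, whence $|\delta|/2<|\gamma|<3|\delta|/2$ and so $[\gamma]_{\k}=[\delta]_{\k}$. This also explains the ``so'' in the corollary's statement: the archimedean-class equality really is a one-line consequence of density.
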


\noindent 
Recall that a cut in an 
ordered set $S$ is just a downward closed subset of $S$, and that 
an element $a$ of an ordered set extending $S$ is said to
realize a cut $D$ in $S$ if~$D<a<S\setminus D$ (so $a\notin S$).

\begin{lemma}\label{extension5}
Let $D$ be a cut in $[\Gamma^{\ne}]_{\k}$ and let $\beta\in\Gamma$ be such that
$\beta<(\Gamma^{>})'$,
$\gamma^\dagger \leq\beta$ for all $\gamma\in\Gamma^{\ne}$ with $[\gamma]_{\k}> D$,
and $\beta\leq \delta^\dagger$ for all $\delta\in\Gamma^{\ne}$ with $[\delta]_{\k}\in D$. 
Then there 
exists an $H$-couple $(\Gamma\oplus \k\alpha,\psi^\alpha)$
over $\k$ that 
extends
$(\Gamma,\psi)$, with $\alpha>0$, such that:
\begin{enumerate}
\item[\textup{(i)}] $[\alpha]_\k$ realizes the cut $D$ in $[\Gamma^{\ne}]_\k$, and 
$\psi^\alpha(\alpha)=\beta$;
\item[\textup{(ii)}] for any embedding $i\colon (\Gamma,\psi)\to
(\Gamma_1,\psi_1)$ 
and $\alpha_1\in \Gamma_1^{>}$ such that $[\alpha_1]_\k$ realizes 
the cut $\bigl\{\bigl[i(\delta)\bigr]_\k:
[\delta]_\k\in D\bigr\}$
in $\bigl[i(\Gamma^{\ne})\bigr]_\k$ and $\psi_1(\alpha_1)=i(\beta)$, $i$ extends uniquely to an embedding $j\colon 
{(\Gamma\oplus \k\alpha,\psi^\alpha)}\to (\Gamma_1,\psi_1)$ with $j(\alpha)=
\alpha_1$.
\end{enumerate}
Moreover, $[\Gamma\oplus\k\alpha]_{\k}=[\Gamma]_{\k} \cup 
\big\{[\alpha]_{\k}\big\}$ and $\Psi^{\alpha}:=\psi^{\alpha}\big((\Gamma\oplus \k\alpha)^{\ne}\big)=\Psi\cup\{\beta\}$.  If
$(\Gamma, \psi)$ is grounded, then so is $(\Gamma\oplus \k\alpha, \psi^\alpha)$. If $(\Gamma,\psi)$ has asymptotic integration, then so does
$(\Gamma\oplus \k\alpha, \psi^\alpha)$. If $\beta\in \Psi^{\downarrow}$, then a gap in $(\Gamma,\psi)$ remains a gap
in $(\Gamma\oplus \k\alpha, \psi^{\alpha})$. 
\end{lemma}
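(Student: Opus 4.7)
The plan is to construct $(\Gamma\oplus\k\alpha,\psi^\alpha)$ explicitly and to verify (i), (ii) and the moreover clause by case analysis, following the blueprint of the analogous lemma in \cite[Section~9.8]{ADH}. First I would form the abstract $\k$-vector space direct sum $\Gamma\oplus\k\alpha$ and extend the order on $\Gamma$ by declaring, for $\gamma\in\Gamma$ and $c\in\k$, that $\gamma+c\alpha>0$ iff either $\gamma>0$ with $[\gamma]_\k>D$, or $c>0$ and ($\gamma=0$ or $[\gamma]_\k\in D$). A short verification shows this is an ordered $\k$-vector space containing $\Gamma$ as an ordered subspace, with $\alpha>0$, $[\alpha]_\k$ realizing the cut $D$ in $[\Gamma^{\ne}]_\k$, and $[\Gamma\oplus\k\alpha]_\k=[\Gamma]_\k\cup\{[\alpha]_\k\}$. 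I would then put
\[
\psi^\alpha(\gamma+c\alpha)\ =\ \begin{cases}\psi(\gamma) & \text{if $\gamma\ne 0$ and ($c=0$ or $[\gamma]_\k>[\alpha]_\k$),}\\ \beta & \text{otherwise}\end{cases}
\]
on nonzero $\gamma+c\alpha$. This extends $\psi$, is $\k^\times$-invariant (so (AC2) holds automatically), and yields $\psi^\alpha(\alpha)=\beta$.

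The main technical step is verifying (HC) and (AC3) for $\psi^\alpha$, since (AC2) together with (HC) implies (AC1) by the remark in the excerpt. For (HC) I would split on whether $[\delta_i]_\k$ lies in $[\Gamma^{\ne}]_\k$ or equals $[\alpha]_\k$, for $i=1,2$; in every subcase the required monotonicity reduces either to (HC) inside $(\Gamma,\psi)$, or to the standing hypotheses $\gamma^\dagger\le\beta$ for $[\gamma]_\k>D$ and $\beta\le\delta^\dagger$ for $[\delta]_\k\in D$. For (AC3), applied to $\delta=\gamma+c\alpha>0$, I would examine each of the two defining cases of $\psi^\alpha$: in the first case the argument reduces to (AC3) in $(\Gamma,\psi)$ after noting that $\gamma>0$, and in the second case it uses the hypothesis $\beta<(\Gamma^{>})'$ together with the archimedean domination of $c\alpha$ over every $\gamma'\in\Gamma$ with $[\gamma']_\k\in D$.

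The universal property in (ii) follows almost for free: $\k$-linearity forces $j$ to send $\gamma+c\alpha$ to $i(\gamma)+c\alpha_1$, and the hypotheses that $[\alpha_1]_\k$ realizes the image of $D$ in $[i(\Gamma^{\ne})]_\k$ and that $\psi_1(\alpha_1)=i(\beta)$ guarantee that $j$ is order-preserving and intertwines $\psi^\alpha$ with $\psi_1$ via the same two-case analysis. For the moreover clause, $\Psi^\alpha=\Psi\cup\{\beta\}$ is immediate from the definition of $\psi^\alpha$, and preservation of grounded is then clear. Preservation of asymptotic integration amounts to solving $\epsilon'=\delta$ for each $\delta=\gamma+c\alpha$: when $c=0$ I would use asymptotic integration in $\Gamma$; when $c\ne 0$, I would first try $\epsilon:=(\gamma-c\beta)+c\alpha$, which works whenever $\gamma=c\beta$ or $[\gamma-c\beta]_\k\in D$, and in the remaining subcase $[\gamma-c\beta]_\k>D$ I would take $\epsilon:=\eta+c\alpha$ with $\eta\in\Gamma^{\ne}$ an antiderivative of $\gamma$, then verify $[\eta]_\k>D$ from the constraint on $\gamma-c\beta$. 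Preservation of gaps under $\beta\in\Psi^\downarrow$ follows from the inequality $\beta<\beta_0$ (given by $\beta\le\psi(\gamma)<\beta_0$ for a suitable $\gamma$) together with a direct split on the two defining cases of $\psi^\alpha$ to check $\delta'>\beta_0$ for all $\delta>0$ in the extension.

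The main obstacle I anticipate is the bookkeeping in (HC) and (AC3), and especially in the asymptotic-integration subcase $[\gamma-c\beta]_\k>D$, where one must produce an antiderivative in $\Gamma$ whose $\k$-archimedean class lands on the correct side of the cut $D$; this last point hinges on the interplay between the three hypotheses on $\beta$.
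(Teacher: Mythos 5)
Your construction coincides with the paper's: the paper forms the same ordered $\k$-vector space $\Gamma\oplus\k\alpha$ (via the analogue of \cite[Lemma~2.4.5]{ADH}) and sets $\psi^{\alpha}(\gamma+c\alpha):=\min\{\psi(\gamma),\beta\}$ for $c\in\k^{\times}$, then invokes the proof of \cite[Lemma~9.8.7]{ADH} for everything except the final ``gap'' claim, which it proves by contradiction. Your case-split formula for $\psi^{\alpha}$ is equivalent to this $\min$: the standing hypotheses force $\psi(\gamma)\le\beta$ when $[\gamma]_{\k}>D$ and $\psi(\gamma)\ge\beta$ when $[\gamma]_{\k}\in D$. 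So this is essentially the paper's proof, written out rather than cited; your direct argument for the gap claim (show $\beta_0<\bigl((\Gamma\oplus\k\alpha)^{>}\bigr)'$ for the gap $\beta_0$ of $\Gamma$) is a valid alternative to the paper's contradiction via $\gamma_0=\rho'$ with $0<\rho<\Gamma^{>}$.

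Two concrete issues. First, your candidate antiderivative is wrong: writing $\epsilon=\eta+c\alpha$ and using that $\psi^{\alpha}$ is $\Gamma$-valued, $\epsilon'=\gamma+c\alpha$ forces $\eta+\psi^{\alpha}(\eta+c\alpha)=\gamma$, so in the regime where $\psi^{\alpha}(\eta+c\alpha)=\beta$ the correct choice is $\eta=\gamma-\beta$, not $\gamma-c\beta$. Second, the (AC3) check in the case $\psi^{\alpha}(\delta)=\psi(\gamma)$ with $c\ne 0$ does not simply ``reduce to (AC3) in $(\Gamma,\psi)$'': knowing $\gamma'>\Psi\cup\{\beta\}$ in $\Gamma$, you still need $\gamma'+c\alpha>\Psi\cup\{\beta\}$, i.e.\ that $[\gamma'-\psi(\eta)]_{\k}$ and $[\gamma'-\beta]_{\k}$ exceed $[\alpha]_{\k}$. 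These require the estimate $[\psi(\mu)-\psi(\nu)]_{\k}<[\mu-\nu]_{\k}$ (\cite[6.5.4]{ADH}) combined with the hypotheses on $\beta$; the same estimate is tacit in your asymptotic-integration subcase (to show that when $[\gamma-\beta]_{\k}>D$, an antiderivative $\eta\in\Gamma$ of $\gamma$ has $[\eta]_{\k}>D$) and in your gap argument. This is precisely where the ``bookkeeping'' you flag actually bites; the overall plan is sound.
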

\begin{proof} By a straightforward analogue of \cite[Lemma~2.4.5]{ADH} we   extend $\Gamma$ to an ordered vector space
$\Gamma^{\alpha}=\Gamma\oplus \k\alpha$ over $\k$ with $\alpha>0$ such that 
$[\alpha]_{\k}$ realizes the cut~$D$ in~$[\Gamma^{\ne}]_{\k}$. Then $[\Gamma\oplus\k\alpha]_{\k}=[\Gamma]_{\k} \cup 
\big\{[\alpha]_{\k}\big\}$. We extend $\psi$ to $\psi^{\alpha}\colon (\Gamma^{\alpha})^{\ne} \to \Gamma$ by 
$$\psi^{\alpha}(\gamma+c\alpha)\ :=\ \min\!\big\{\psi(\gamma),\beta\big\}\ \text{ for $\gamma\in \Gamma$, $c\in \k^\times$.} $$
Apart from some obvious changes we now follow the proof of \cite[Lemma~9.8.7]{ADH}. This gives the desired results, except for the last Claim~of the lemma. To prove that claim, 
let $\beta\in \Psi^{\downarrow}$, let $\gamma\in \Gamma$ be a gap in $(\Gamma,\psi)$, and assume towards a contradiction that
$\gamma$ is not a gap in $(\Gamma^{\alpha},\psi^{\alpha})$. Then 
$\gamma> \Psi^\alpha$, so $\gamma=(\delta+c\alpha)'$ with 
$\delta\in \Gamma$, $c\in \k^\times$ and
$0 <\delta+c\alpha < \Gamma^{>}$. Then $[\delta+c\alpha]_{\k}\notin [\Gamma]_{\k}$, so $[\delta+c\alpha]_{\k}=[\alpha]_{\k}$.
As $\Psi$ has no largest element, we get $\Psi < (\delta+c\alpha)^\dagger=\alpha^\dagger=\beta$,
a contradiction.   
\end{proof}

\subsection*{The case of Hahn type} In Lem\-ma~\ref{extension1} (and in its variant with $\alpha<0$), in Lem\-ma~\ref{extas2}, and in Lem\-ma~\ref{extension5} for $\beta\notin \Psi$, we have: 
$$ \text{\em if $(\Gamma, \psi)$ is of Hahn type, then so is $(\Gamma+\k\alpha, \psi^{\alpha})$}.$$ 
Suppose $(\Gamma, \psi)$ and $(\Gamma+\k\beta, \psi_{\beta})$ are as in Lemma~\ref{addgap}, 
and $(\Gamma,\psi)$ is of Hahn type. 
We claim that then $(\Gamma+\k\beta, \psi_{\beta})$ is also of Hahn type. To prove this claim, recall from Corollary~\ref{addgapcor}
that $\Gamma$ is dense in $\Gamma+\k\beta$.  It follows easily that for
 nonzero~$\alpha_1, \alpha_2\in \Gamma+\k\beta$ with $\psi_{\beta}(\alpha_1)=\psi_{\beta}(\alpha_2)$ we have $[\alpha_1]_{\k}=[\alpha_2]_{\k}$. It remains to show that $\Gamma+\k\beta$ is a Hahn space over $\k$. So let  $\alpha_1, \alpha_2\in  \Gamma+\k\beta$ be nonzero with $[\alpha_1]_{\k}=[\alpha_2]_{\k}$. By density again, and the fact that $[\Gamma]_{\k}=[\Gamma+\k\beta]_{\k}$ has no least element $> [0]_{\k}$, 
we have $\gamma_1, \gamma_2\in \Gamma$ such that $[\alpha_1-\gamma_1]_{\k}< [\alpha_1]_{\k}$ and $[\alpha_2-\gamma_2]_{\k}< [\alpha_2]_{\k}$. Take $c\in \k^\times$ such that $[\gamma_1-c\gamma_2]_{\k}< [\gamma_1]_{\k}$.
It follows easily that then $[\alpha_1-c\alpha_2]_{\k}< [\alpha_1]_{\k}$.

\subsection*{New extension lemmas} The three next lemmas will enable in the next section a simpler proof of QE for closed
$H$-couples than in \cite{AvdD}: in that paper we needed ``properties (A) and (B)'' with long and tedious proofs,
and here we avoid this. 

\begin{lemma}\label{g1g2} Suppose $(\Gamma_1,\psi_1)$ extends 
$(\Gamma,\psi)$. Let $\beta\in \Gamma_1\setminus \Gamma$ and 
$\alpha_0\in \Gamma$ be such that 
$(\beta-\alpha_0)^\dagger\notin \Gamma$. Then 
$(\beta-\alpha_0)^\dagger\ =\max\big\{(\beta-\alpha)^\dagger:\,\alpha\in \Gamma\big\}$.  If in addition $\Gamma^{<}$ is cofinal in $\Gamma_1^{<}$, then
$(\beta-\alpha_0)^\dagger\le \text{ some element of }\Psi$. 
\end{lemma}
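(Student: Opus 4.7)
The plan is to decompose $\beta-\alpha=(\beta-\alpha_0)+(\alpha_0-\alpha)$ for each $\alpha\in\Gamma$ and compute $\psi_1(\beta-\alpha)$ by comparing the $\k$-archimedean classes of the two summands inside $\Gamma_1$. The case $\alpha=\alpha_0$ trivially contributes $(\beta-\alpha_0)^\dagger$ to the target set, so it suffices to show $(\beta-\alpha)^\dagger\le(\beta-\alpha_0)^\dagger$ for $\alpha\ne\alpha_0$. Then $\alpha_0-\alpha\in\Gamma^{\ne}$, so $\psi(\alpha_0-\alpha)\in\Gamma$, whereas $\psi_1(\beta-\alpha_0)\notin\Gamma$ by hypothesis.

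First I would rule out the possibility $[\beta-\alpha_0]_\k=[\alpha_0-\alpha]_\k$: the fact recorded in the preliminaries (equal $\k$-archimedean classes force equal $\psi$-values, a direct consequence of (HC) and scalar invariance $\psi(c\gamma)=\psi(\gamma)$) would otherwise give $\psi_1(\beta-\alpha_0)=\psi_1(\alpha_0-\alpha)=\psi(\alpha_0-\alpha)\in\Gamma$, contradicting the hypothesis. So the two classes are distinct, and the max rule for $[\,\cdot\,]_\k$ applies to the sum. If $[\beta-\alpha_0]_\k>[\alpha_0-\alpha]_\k$, then $[\beta-\alpha]_\k=[\beta-\alpha_0]_\k$, whence $\psi_1(\beta-\alpha)=\psi_1(\beta-\alpha_0)$. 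If $[\beta-\alpha_0]_\k<[\alpha_0-\alpha]_\k$, then $|\beta-\alpha_0|<|\alpha_0-\alpha|$ (take $c=1$ in the definition of $<$ on classes), so (HC) yields $\psi_1(\beta-\alpha_0)\ge\psi(\alpha_0-\alpha)$, while the max rule gives $\psi_1(\beta-\alpha)=\psi(\alpha_0-\alpha)\le\psi_1(\beta-\alpha_0)$. Either way $(\beta-\alpha)^\dagger\le(\beta-\alpha_0)^\dagger$, proving the maximum.

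For the addendum, assume $\Gamma^<$ is cofinal in $\Gamma_1^<$. Since $\beta\notin\Gamma$ we have $-|\beta-\alpha_0|\in\Gamma_1^<$, so cofinality supplies $\gamma\in\Gamma^<$ with $-|\beta-\alpha_0|\le\gamma$, i.e.\ $0<|\gamma|\le|\beta-\alpha_0|$. Applying (HC) inside $(\Gamma_1,\psi_1)$ then yields $\psi(\gamma)=\psi_1(\gamma)\ge\psi_1(\beta-\alpha_0)=(\beta-\alpha_0)^\dagger$, placing $(\beta-\alpha_0)^\dagger$ below the element $\psi(\gamma)\in\Psi$. I do not expect any serious obstacle; the only subtlety is making sure that the $\k$-archimedean class of an element of $\Gamma$ is unchanged when viewed inside $\Gamma_1$, which is automatic because the embedding preserves order and scalar multiplication.
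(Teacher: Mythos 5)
Your proof is correct and the underlying idea matches the paper's. The paper's own argument is more compact: it assumes for contradiction that $(\beta-\alpha)^\dagger>(\beta-\alpha_0)^\dagger$ for some $\alpha\in\Gamma$ and applies the ultrametric property of the group valuation $\psi_1$ to $\alpha-\alpha_0=(\beta-\alpha_0)-(\beta-\alpha)$, obtaining $(\alpha-\alpha_0)^\dagger=(\beta-\alpha_0)^\dagger\in\Gamma$ in one line; your case analysis on $\k$-archimedean classes is an unpacked version of that same valuation-theoretic computation, and your handling of the addendum is identical to the paper's.
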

\begin{proof} Suppose $\alpha\in \Gamma$ and
$(\beta-\alpha)^\dagger > (\beta-\alpha_0)^\dagger$.
Then $\alpha-\alpha_0=(\beta-\alpha_0)-(\beta-\alpha)$ gives 
$(\beta-\alpha_0)^\dagger=(\alpha-\alpha_0)^\dagger\in \Gamma$, a contradiction. Assume $|\beta-\alpha_0|\ge |\gamma|$, $\gamma\in \Gamma^{\ne}$. Then $(\beta-\alpha_0)^\dagger\le \gamma^\dagger\in \Psi$. 
\end{proof}

\begin{lemma}\label{cutinheritance} Suppose $(\Gamma,\psi)$ is closed and $(\Gamma_1, \psi_1)$ and $(\Gamma_*,\psi_*)$ are $H$-couples over~$\k$ extending $(\Gamma,\psi)$.  Let $\beta\in \Gamma_1\setminus \Gamma$ and $\beta_*\in \Gamma_*\setminus \Gamma$ realize the same cut in $\Gamma$, and suppose that $\beta^\dagger\notin \Gamma$ and $\Gamma^{<}$ is cofinal in
$(\Gamma+\k\beta^\dagger)^{<}$.
 Then $\beta_*^\dagger\notin \Gamma$, and
$\beta^\dagger$ and $\beta_*^\dagger$
realize the same cut in $\Gamma$. 
\end{lemma}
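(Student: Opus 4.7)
My plan is to characterize the cut $D_{\beta^\dagger}:=\{\gamma\in\Gamma:\gamma<\beta^\dagger\}$ entirely from the common cut $D$ realized by $\beta$ and $\beta_*$ in $\Gamma$; the cofinality hypothesis will then force this cut to be unrealized in $\Gamma$, giving $\beta_*^\dagger\notin\Gamma$.

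First, since $[\beta]_\k=[\alpha]_\k$ implies $\beta^\dagger=\alpha^\dagger$ by the fact noted in Section~\ref{prelim}, the hypothesis $\beta^\dagger\notin\Gamma$ forces $[\beta]_\k\neq[\alpha]_\k$ for every $\alpha\in\Gamma^{\ne}$. The relation $[\beta]_\k<[\alpha]_\k$ depends only on $D$ (it amounts to requiring $-|\alpha|/c<\beta<|\alpha|/c$ for every $c\in\k^{>}$), so the same applies to $\beta_*$, and $T:=\{\alpha\in\Gamma^{\ne}:[\beta]_\k<[\alpha]_\k\}$ also equals $\{\alpha\in\Gamma^{\ne}:[\beta_*]_\k<[\alpha]_\k\}$. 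For $\alpha\in T$, $|\beta|<|\alpha|$ together with (HC) and $\beta^\dagger\notin\Gamma$ gives $\alpha^\dagger<\beta^\dagger$ strictly; for $\alpha\in\Gamma^{\ne}\setminus T$, symmetrically $\alpha^\dagger>\beta^\dagger$. This forces the dichotomy that, for each $\gamma\in\Psi$, all $\alpha\in\Gamma^{\ne}$ with $\alpha^\dagger=\gamma$ lie in $T$ or none do (else $\gamma$ would be both below and above $\beta^\dagger$). Combined with (AC3) in $\Gamma_1$, which places every $\gamma\in\Gamma\setminus\Psi$ (of the form $\alpha+\alpha^\dagger$ with $\alpha>0$) above $\beta^\dagger$, I obtain
\[
D_{\beta^\dagger}\ =\ E_T\ :=\ \{\gamma\in\Psi:\text{every }\alpha\in\Gamma^{\ne}\text{ with }\alpha^\dagger=\gamma\text{ lies in }T\},
\]
a downward-closed subset of $\Psi$ depending only on $D$.

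Next I would use the cofinality hypothesis to show that $E_T$ has no maximum in $\Gamma$ and $\Psi\setminus E_T$ has no minimum in $\Gamma$. If $\gamma_0=\max E_T$, then downward-closedness of $\Psi$ forces $E_T=(-\infty,\gamma_0]$, so that $\beta^\dagger$ sits infinitesimally above $\gamma_0$; then $\gamma_0-\beta^\dagger$ is a negative element of $\Gamma+\k\beta^\dagger$ lying above every element of $\Gamma^{<}$, contradicting the cofinality hypothesis. A symmetric argument handles $\min(\Psi\setminus E_T)$.

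Finally, the same sandwich inequalities $\alpha^\dagger\leq\beta_*^\dagger$ (for $\alpha\in T$) and $\beta_*^\dagger\leq\alpha^\dagger$ (for $\alpha\in\Gamma^{\ne}\setminus T$) hold by (HC) in $\Gamma_*$, and (AC3) in $\Gamma_*$ places $\beta_*^\dagger$ strictly below $\Gamma\setminus\Psi$. If $\beta_*^\dagger\in\Gamma$, it would therefore be either $\max E_T$ or $\min(\Psi\setminus E_T)$, both excluded by the previous paragraph. Hence $\beta_*^\dagger\notin\Gamma$, and redoing the cut computation in $\Gamma_*$ yields $D_{\beta_*^\dagger}=E_T=D_{\beta^\dagger}$. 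The main technical obstacle I anticipate is careful bookkeeping in the degenerate cases $T=\emptyset$ and $T=\Gamma^{\ne}$ (where $\beta$ is ``infinitely large'' or ``infinitesimal'' relative to $\Gamma$): there $E_T$ or its complement in $\Psi$ is empty, and the no-max/no-min argument must be replaced by the observation that $\Psi$ itself has no maximum in $\Gamma$ and extends cofinally downward.
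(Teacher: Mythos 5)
Your proposal is correct and takes essentially the same approach as the paper. The core idea in both is identical: the common cut of $\beta$ and $\beta_*$ determines, via (HC), how $\beta^\dagger$ and $\beta_*^\dagger$ sit relative to elements $\alpha^\dagger\in\Psi$, and then the cofinality hypothesis together with downward-closedness of $\Psi$ and asymptotic integration rules out any ``jump'' (i.e.\ $\beta_*^\dagger$ landing in $\Gamma$, or the two realizing different cuts). Your version packages this as an explicit cut $E_T$ and checks it has no endpoints; the paper proves the equivalent two implications $\beta^\dagger\lessgtr\alpha^\dagger \Rightarrow \beta_*^\dagger\lessgtr\alpha^\dagger$ directly, using cofinality and closedness to interpolate a $\gamma^\dagger$ strictly between $\beta^\dagger$ and $\alpha^\dagger$ to upgrade $\leq$ to $<$. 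These are the same argument in different clothing, and the bookkeeping you flag in the degenerate cases does work out (when $E_T=\Psi$ one uses that $\Psi$ has no maximum by asymptotic integration; when $E_T=\emptyset$ one uses that $\Psi$ is downward closed and $\Gamma$ has no least element).
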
 
\begin{proof} 
Let $\alpha\in \Gamma^{\ne}$. We claim:
$$\beta^\dagger<\alpha^\dagger\ \Rightarrow\ \beta_*^\dagger<\alpha^\dagger, \qquad \beta^\dagger>\alpha^\dagger\ \Rightarrow\ \beta_*^\dagger>\alpha^\dagger.$$
To prove the first implication, assume $\beta^\dagger<\alpha^\dagger$. Then $|\beta|>|\alpha|$, so 
$|\beta_*|>|\alpha|$, and thus 
$\beta_*^\dagger\le\alpha^\dagger$. Since $(\Gamma,\psi)$ is closed and $\Gamma^{<}$ is cofinal in $(\Gamma+\k\beta^\dagger)^{<}$, we can replace in this
argument $\alpha$ by some $\gamma\in \Gamma^{\ne}$ with
$\beta^\dagger<\gamma^\dagger<\alpha^\dagger$, to get
$\beta_*^\dagger\le \gamma^\dagger<\alpha^\dagger$, and thus 
$\beta_*^\dagger<\alpha^\dagger$ as claimed.
The second implication follows in the same way. 

If
$\beta^\dagger< \gamma^\dagger$ for some 
$\gamma\in \Gamma^{\ne}$, then
$(\Gamma,\psi)$ being closed gives the desired conclusion. If $\beta^\dagger >\Psi$, then we use instead
$\Psi < \beta^\dagger < (\Gamma^{>})'$ and $\Psi < \beta_*^\dagger < (\Gamma^{>})'$.
\end{proof} 


\begin{lemma}\label{b1b2} Suppose $(\Gamma_1,\psi_1)$ extends $(\Gamma,\psi)$. Let $\beta\in \Gamma_1\setminus \Gamma$ and $\alpha_0, \alpha_1\in \Gamma$ be such that $\beta_0^\dagger\notin \Gamma$ for $\beta_0:= \beta-\alpha_0$ and $\beta_1^\dagger\notin \Psi$
for $\beta_1:=\beta_0^\dagger-\alpha_1$. Assume also that~$|\beta_0|\ge |\alpha|$ for some $\alpha\in \Gamma^{\ne}$.
Then $\beta_0^\dagger < \beta_1^\dagger$. 
\end{lemma}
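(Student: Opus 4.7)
Set $\delta := \beta_0^\dagger$. A close reading of the proof of Lemma~\ref{g1g2} shows that its conclusion holds under the slightly weaker hypothesis $(\beta-\alpha_0)^\dagger \notin \Psi$, since the contradiction actually derived is membership in $\Psi$ (the conclusion $(\alpha-\alpha_0)^\dagger\in\Psi$), rather than in $\Gamma$. I will use this strengthened form. Applying it to $\delta \in \Gamma_1 \setminus \Gamma$ and $\alpha_1 \in \Gamma$, with $(\delta - \alpha_1)^\dagger = \beta_1^\dagger \notin \Psi$, I obtain
\[
\beta_1^\dagger \ =\ \max\big\{(\delta-\alpha)^\dagger : \alpha \in \Gamma\big\}.
\]
Thus it suffices to exhibit $\alpha_*\in\Gamma$ with $(\delta-\alpha_*)^\dagger > \delta$.

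To choose $\alpha_*$, I first verify that $[\beta_0]_\k \notin [\Gamma^{\ne}]_\k$: if some $\gamma \in \Gamma^{\ne}$ satisfied $[\gamma]_\k = [\beta_0]_\k$, the preliminaries (same $\k$-archimedean class implies same $\psi$-value) would give $\delta = \psi_1(\beta_0) = \psi(\gamma) = \gamma^\dagger \in \Psi \subseteq \Gamma$, contradicting $\delta \notin \Gamma$. So for $\alpha \in \Gamma^{\ne}$ with $|\beta_0| \geq |\alpha|$ (furnished by the hypothesis) we have $[\alpha]_\k < [\beta_0]_\k$ and hence $[\beta_0 - \alpha]_\k = [\beta_0]_\k$. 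I set $\alpha_* := \alpha^\dagger \in \Psi$. The preliminaries' fundamental inequality
\[
[\psi_1(a) - \psi_1(b)]_\k \ < \ [a-b]_\k \qquad (\text{distinct }a,b \in \Gamma_1^{\ne}),
\]
applied with $a = \beta_0$ and $b = \alpha$, yields $[\delta - \alpha^\dagger]_\k < [\beta_0-\alpha]_\k = [\beta_0]_\k$. By (HC) this gives $(\delta - \alpha^\dagger)^\dagger \geq \psi_1(\beta_0) = \delta$, and strictness is then obtained by an auxiliary argument: if $(\delta - \alpha^\dagger)^\dagger = \delta$, then in particular $\delta \in \Psi_1$ would be realized by an element of a strictly smaller $\k$-archimedean class than $\beta_0$, which combined with $[\beta_0]_\k \notin [\Gamma^{\ne}]_\k$ and the max characterization in Step~1 leads to $\beta_1^\dagger \in \Psi$, against hypothesis.

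Combining the two steps gives $\beta_1^\dagger \geq (\delta - \alpha^\dagger)^\dagger > \delta = \beta_0^\dagger$, as required. \textbf{Main obstacle.} The principal difficulty is promoting $(\delta - \alpha^\dagger)^\dagger \geq \delta$ to a strict inequality: in a general $H$-couple over $\k$, no Hahn-type assumption is available, and $\psi_1$ is only weakly monotone on $\k$-archimedean classes, so a strict decrease of class does not automatically produce a strict increase of $\psi_1$. Extracting strictness requires a careful case analysis that uses both the negative conclusion $[\beta_0]_\k \notin [\Gamma^{\ne}]_\k$ and the hypothesis $\beta_1^\dagger \notin \Psi$ to rule out the degenerate configuration in which $\beta_0$ and $\delta - \alpha^\dagger$ would produce equal $\psi_1$-values while having incomparable archimedean classes.
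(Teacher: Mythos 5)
Your route is genuinely different from the paper's: you pass through a strengthened form of Lemma~\ref{g1g2} to characterize $\beta_1^\dagger$ as $\max\{(\delta-\alpha)^\dagger:\alpha\in\Gamma\}$ (and your observation that the proof of Lemma~\ref{g1g2} only needs $(\beta-\alpha_0)^\dagger\notin\Psi$ is correct, since the contradiction produced there is $(\beta-\alpha_0)^\dagger=(\alpha-\alpha_0)^\dagger\in\Psi$), and you then exhibit a witness $\alpha_*=\alpha^\dagger$. Everything up to $(\delta-\alpha^\dagger)^\dagger\ge\delta$ is sound: the verification that $[\beta_0]_\k\notin[\Gamma^{\ne}]_\k$, the appeal to $\big[\psi_1(a)-\psi_1(b)\big]_\k<[a-b]_\k$, and the use of (HC). The paper avoids Lemma~\ref{g1g2} entirely, compares $[\beta_0^\dagger-\alpha^\dagger]_\k$ with $[\beta_0^\dagger-\alpha_1]_\k$ directly, and finishes in one stroke with \cite[Lemma~6.5.4(i)]{ADH}.

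The genuine gap is your promotion of $(\delta-\alpha^\dagger)^\dagger\ge\delta$ to a strict inequality. The ``auxiliary argument'' — that the supposition $(\delta-\alpha^\dagger)^\dagger=\delta$, combined with $[\beta_0]_\k\notin[\Gamma^{\ne}]_\k$ and the max characterization, would force $\beta_1^\dagger\in\Psi$ — is asserted rather than proved, and I do not see how to derive it. Under that supposition the max characterization yields only $\beta_1^\dagger\ge\delta$; in the compatible sub-case $\beta_1^\dagger=\delta$ one has $\beta_1^\dagger=\delta=\beta_0^\dagger\notin\Gamma$, hence a fortiori $\beta_1^\dagger\notin\Psi$, so none of the hypotheses is violated — yet the desired conclusion $\beta_0^\dagger<\beta_1^\dagger$ fails. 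Nothing in the hypotheses of this lemma excludes that configuration; what excludes it is a general fact about asymptotic couples: for $a,b\in\Gamma_1^{\ne}$ with $\psi_1 a\ne\psi_1 b$ one has $\min(\psi_1 a,\psi_1 b)<\psi_1(\psi_1 a-\psi_1 b)$, which is \cite[Lemma~6.5.4(i)]{ADH} and is exactly the ingredient the paper invokes at this point. Your ``Main obstacle'' paragraph correctly identifies the difficulty, but the claimed ``careful case analysis'' is not supplied and cannot be completed from this lemma's hypotheses alone; you need 6.5.4(i) or an equivalent strictness statement.
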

\begin{proof} From $|\beta_0|\ge |\alpha|$ with $\alpha\in \Gamma^{\ne}$  we get $\beta_0^\dagger\le \alpha^\dagger$.
Also, 
$\beta_0^\dagger-\alpha^\dagger\notin \Gamma$ and~${[\beta_0^\dagger-\alpha_1]_{\k}}\notin [\Gamma]_{\k}$, hence 
$[\beta_0^\dagger-\alpha^\dagger]_{\k}\ge 
[\beta_0^\dagger-\alpha_1]_{\k}$. In view of \cite[6.5.4(i)]{ADH} this gives
\equationqed{\beta_0^\dagger\ =\ \min(\beta_0^\dagger, \alpha^\dagger)\ <\ (\beta_0^\dagger-\alpha^\dagger)^\dagger\ \le\ (\beta_0^\dagger-\alpha_1)^\dagger\ =\ \beta_1^\dagger.}
\end{proof}

\section{Eliminating Quantifiers for Closed $H$-couples}\label{qe}

\noindent
Eliminating quantifiers for closed $H$-couples requires a predicate for their $\Psi$-set, and in this connection we need to study the substructures of the thus expanded
$H$-couples. Accordingly we define
an {\em $H$-triple over $\k$\/} to be a triple 
$(\Gamma, \psi, P)$ where~$(\Gamma,\psi)$ is an $H$-couple 
over $\k$ and $P\subseteq \Gamma$ is an $H$-cut in $(\Gamma,\psi)$. 

\begin{lemma}\label{extpsi} Let $(\Gamma,\psi, P)$ be an $H$-triple over $\k$, and let $\beta\in P\setminus \Psi$. Then $(\Gamma,\psi, P)$ can be extended to an $H$-triple $(\Gamma\oplus \k\alpha, \psi^{\alpha}, P^{\alpha})$ over $\k$ such that: \begin{enumerate}
\item[\textup{(i)}] $\alpha>0$ and $\psi^{\alpha}(\alpha)=\beta$;
\item[\textup{(ii)}] given any embedding $i\colon (\Gamma, \psi, P)\to (\Gamma^*, \psi^*, P^*)$ and any element $\alpha^*>0$ in $\Gamma^*$ with
$\psi^*(\alpha^*)=i(\beta)$, there is a unique extension of $i$ to an embedding~$j\colon (\Gamma\oplus \k\alpha, \psi^{\alpha}, P^{\alpha})\to (\Gamma^*, \psi^*, P^*)$ with $j(\alpha)=\alpha^*$.
\end{enumerate}
If $(\Gamma,\psi)$ is of Hahn type, then so is $(\Gamma\oplus \k\alpha, \psi^{\alpha})$. 
\end{lemma}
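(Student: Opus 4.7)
The plan is to invoke Lemma~\ref{extension5} to construct the underlying $H$-couple $(\Gamma\oplus\k\alpha,\psi^{\alpha})$ and then to define $P^{\alpha}$ as the unique $H$-cut of the extension whose intersection with $\Gamma$ is $P$. For the first step I would set $D:=\{[\delta]_{\k}:\delta\in\Gamma^{\ne},\,\delta^{\dagger}>\beta\}$; since $(\Gamma,\psi)$ is of $H$-type, (HC) makes $D$ downward closed in $[\Gamma^{\ne}]_{\k}$, hence a cut. From $\beta\in P<(\Gamma^{>})'$ and the definition of $D$ one reads off the three hypotheses needed to apply Lemma~\ref{extension5} with this $D$ and $\beta$, producing $(\Gamma\oplus\k\alpha,\psi^{\alpha})$ with $\alpha>0$, $\psi^{\alpha}(\alpha)=\beta$, and $\Psi^{\alpha}=\Psi\cup\{\beta\}$.

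Next I would identify the $H$-cuts of this extension via the trichotomy. If $(\Gamma,\psi)$ is grounded or has asymptotic integration, the same holds of $(\Gamma\oplus\k\alpha,\psi^{\alpha})$ by the addendum to Lemma~\ref{extension5}; if $(\Gamma,\psi)$ has a gap $\gamma_{0}$ and $\beta\ne\gamma_{0}$, then $\beta\in\Psi^{\downarrow}$ (since $\beta\in P\subseteq\Psi^{\downarrow}\cup\{\gamma_{0}\}$), so the last assertion of Lemma~\ref{extension5} keeps $\gamma_{0}$ as a gap; the remaining sub-case $\beta=\gamma_{0}$ gives $\max\Psi^{\alpha}=\gamma_{0}$, making the extension grounded. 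In every case the only possible gap of the extension is $\gamma_{0}\in\Gamma$, so an $H$-cut of $(\Gamma\oplus\k\alpha,\psi^{\alpha})$ is uniquely determined by its trace on $\Gamma$: in the grounded/asymptotic-integration cases there is only one $H$-cut, while in the gap case the two candidates differ only by whether they contain $\gamma_{0}$. I would then let $P^{\alpha}$ be the unique such $H$-cut satisfying $P^{\alpha}\cap\Gamma=P$.

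For the universal property (ii), given an $H$-triple embedding $i\colon(\Gamma,\psi,P)\to(\Gamma^{*},\psi^{*},P^{*})$ and $\alpha^{*}>0$ in $\Gamma^{*}$ with $\psi^{*}(\alpha^{*})=i(\beta)$, I would first verify the hypothesis of Lemma~\ref{extension5}(ii). Since $\beta\notin\Psi$ and any $H$-couple over $\k$ satisfies $[\eta_{1}]_{\k}=[\eta_{2}]_{\k}\Rightarrow\psi(\eta_{1})=\psi(\eta_{2})$, the class $[\alpha^{*}]_{\k}$ avoids every $[i(\delta)]_{\k}$; together with (HC) and $\psi^{*}(\alpha^{*})\ne\psi^{*}(i(\delta))$ this gives $|\alpha^{*}|>|i(\delta)|\iff\psi^{*}(\alpha^{*})<\psi^{*}(i(\delta))\iff\beta<\delta^{\dagger}\iff[\delta]_{\k}\in D$, showing that $[\alpha^{*}]_{\k}$ realizes the cut $\{[i(\delta)]_{\k}:[\delta]_{\k}\in D\}$. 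Lemma~\ref{extension5}(ii) then furnishes a unique $H$-couple embedding $j$ extending $i$ with $j(\alpha)=\alpha^{*}$. The pullback $j^{-1}(P^{*})$ is again an $H$-cut of the extension whose trace on $\Gamma$ equals $P$, so by the uniqueness from the previous step $j^{-1}(P^{*})=P^{\alpha}$, meaning $j$ is an $H$-triple embedding. Hahn-type preservation is precisely the $\beta\notin\Psi$ case of the ``Case of Hahn type'' paragraph in Section~\ref{ext}.

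The main obstacle I anticipate is the gap case: one needs to confirm that any gap of the extension lies already in $\Gamma$, so that an $H$-cut of the extension is recoverable from its trace on $\Gamma$, and that the resulting choice of $P^{\alpha}$ matches $j^{-1}(P^{*})$ without a circular appeal to $j$ itself.
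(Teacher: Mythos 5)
Your proof is correct and follows exactly the route the paper intends: the paper's own proof is a one-line appeal to Lemma~\ref{extension5} ("Distinguishing various cases this follows from Lemma~\ref{extension5}, especially the claims beginning with `Moreover'. Use also `The case of Hahn type'."), and you have faithfully filled in those cases. Your choice of cut $D=\{[\delta]_{\k}:\delta^\dagger>\beta\}$ is the right one, the verification that $[\alpha^*]_{\k}$ realizes the image of $D$ is handled correctly (the key observations being that $\beta\notin\Psi$ forces $[\alpha^*]_{\k}\notin[i(\Gamma^{\ne})]_{\k}$, and that (HC) plus inequality of $\psi$-values then pins down the cut), and your case analysis via the trichotomy — grounded/asymptotic-integration preserved by the "Moreover" addenda, gap preserved when $\beta\in\Psi^{\downarrow}$, and the $\beta=\gamma_0$ subcase yielding a grounded extension — correctly establishes that the trace map on $H$-cuts is a bijection onto the $H$-cuts of $\Gamma$ containing $\beta$, so $P^\alpha$ is well-defined and $j^{-1}(P^*)=P^\alpha$ follows without circularity, as you worried about and then resolved.
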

\begin{proof} Distinguishing various cases this follows from Lemma~\ref{extension5}, especially the claims beginning with ``Moreover''. Use also ``The case of Hahn type''. 
\end{proof}

\noindent
An {\em $H$-closure}\/ of an $H$-triple $(\Gamma, \psi, P)$ over $\k$ is defined to be a closed $H$-triple $(\Gamma^{\operatorname{c}}, \psi^{\operatorname{c}}, P^{\operatorname{c}})$ over $\k$ that extends $(\Gamma,\psi,P)$ such that any embedding 
$$(\Gamma, \psi, P)\ \to\ (\Gamma^*, \psi^*, P^*)$$ into a closed $H$-triple $(\Gamma^*, \psi^*, P^*)$ over $\k$ extends to an embedding 
$$(\Gamma^{\operatorname{c}}, \psi^{\operatorname{c}}, P^{\operatorname{c}})\ \to\ (\Gamma^*, \psi^*, P^*).$$

\begin{cor}\label{hcl} Every $H$-triple over $\k$ has an $H$-closure. Every $H$-triple over $\k$ of Hahn type has an $H$-closure that is of Hahn type. 
\end{cor}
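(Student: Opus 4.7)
The plan is to build $(\Gamma^{\operatorname{c}},\psi^{\operatorname{c}},P^{\operatorname{c}})$ as the union of a transfinite chain of $H$-triples over $\k$ extending $(\Gamma,\psi,P)$, obtained by iterated application of the extension lemmas of Section~\ref{ext}. Divisibility of any such $H$-triple is automatic since $\k\supseteq\Q$, so the obstructions to being closed are: having $P_\lambda\setminus\Psi_\lambda\ne\emptyset$, having a gap, or being grounded. Starting from $(\Gamma_0,\psi_0,P_0):=(\Gamma,\psi,P)$ and taking unions at limit ordinals, at each successor stage for which the current $H$-triple $(\Gamma_\lambda,\psi_\lambda,P_\lambda)$ is not closed, I would fix one such obstruction: (a) if $P_\lambda\setminus\Psi_\lambda\ne\emptyset$, choose $\beta$ in this set and apply Lemma~\ref{extpsi} to adjoin $\alpha>0$ with $\psi^{\alpha}(\alpha)=\beta$; (b) if $(\Gamma_\lambda,\psi_\lambda)$ has a gap $\beta$ --- which then must satisfy $\beta\notin P_\lambda$, since the alternative case is covered by (a) --- apply Lemma~\ref{extension1} in its $\alpha<0$ variant, taking $P_{\lambda+1}$ to be the unique $H$-cut of the resulting grounded $H$-couple; (c) if $(\Gamma_\lambda,\psi_\lambda)$ is grounded, apply Lemma~\ref{extas2}, again taking $P_{\lambda+1}$ to be the unique $H$-cut of the extended grounded $H$-couple. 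A standard book-keeping argument over an ordinal of cardinality exceeding $\max(|\Gamma|,|\k|,\aleph_0)$ then guarantees that every obstruction is eventually addressed, and that the union is a closed $H$-triple.

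For the universal property, I would argue by transfinite induction along this construction. Given an embedding $i\colon(\Gamma,\psi,P)\to(\Gamma^*,\psi^*,P^*)$ into a closed $H$-triple, suppose inductively that $i$ has been extended to $i_\lambda\colon(\Gamma_\lambda,\psi_\lambda,P_\lambda)\to(\Gamma^*,\psi^*,P^*)$. At a successor stage, closedness of the target supplies the witness needed by the \textsc{(ii)}-clause of the extension lemma just applied: in case (a), the equality $P^*=\Psi^*$ yields some $\alpha^*>0$ in $\Gamma^*$ with $\psi^*(\alpha^*)=i_\lambda(\beta)$; in cases (b) and (c), asymptotic integration in $(\Gamma^*,\psi^*)$ yields some $\alpha^*\in\Gamma^*$ of the required sign with $(\alpha^*)'=i_\lambda(\beta)$. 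The (ii)-clause of the lemma in question then extends $i_\lambda$ to $i_{\lambda+1}$, and unions handle limit stages.

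The Hahn-type preservation is then immediate: the paragraph ``The case of Hahn type'' together with the last sentence of Lemma~\ref{extpsi} says that each of the extension lemmas used preserves the property of being of Hahn type, and unions of chains of $H$-couples over $\k$ of Hahn type remain of Hahn type, so the closure is of Hahn type whenever $(\Gamma,\psi)$ is.

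I expect the main obstacle to be the book-keeping needed for termination --- making sure that every element that appears in some $P_\lambda\setminus\Psi_\lambda$ or as a gap along the chain is eventually dealt with by an instance of (a), (b), or (c) --- together with the finicky but routine verification at each successor stage that the specified $P_{\lambda+1}$ is genuinely an $H$-cut extending $P_\lambda$. The latter is implicit in the final clauses of Lemmas~\ref{extension5} and~\ref{extpsi}, but has to be checked directly for the $H$-couple-only Lemmas~\ref{extension1} and~\ref{extas2}.
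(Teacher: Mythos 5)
Your overall plan --- build a transfinite chain using Lemmas~\ref{extension1}, \ref{extas2}, and~\ref{extpsi}, with book-keeping for termination --- matches the paper's intent, which simply cites those three lemmas and declares the corollary a routine consequence. However, the sign of $\alpha$ in your case~(b) is reversed, and this breaks precisely the two verifications you flag as ``finicky but routine.'' If $\beta$ is a gap in $(\Gamma_\lambda,\psi_\lambda)$ with $\beta\notin P_\lambda$, then $P_\lambda=\Psi_\lambda^{\downarrow}$. Applying the $\alpha<0$ variant of Lemma~\ref{extension1} gives $\Gamma_\lambda^{<}<\alpha<0$ and $\psi^{\alpha}(\alpha)=\beta-\alpha>\beta$, so the unique $H$-cut of the resulting grounded $H$-couple, restricted to $\Gamma_\lambda$, equals $\{\gamma\in\Gamma_\lambda:\gamma\le\beta-\alpha\}=\{\gamma\in\Gamma_\lambda:\gamma\le\beta\}=\Psi_\lambda^{\downarrow}\cup\{\beta\}\ne P_\lambda$. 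Thus the proposed $P_{\lambda+1}$ does not restrict to $P_\lambda$, and the chain leaves the category of $H$-triples over $\k$. It is the original $\alpha>0$ variant that you want here: then $0<\alpha<\Gamma_\lambda^{>}$, $\psi^{\alpha}(\alpha)=\beta-\alpha<\beta$, and the new $H$-cut restricts to $\{\gamma\in\Gamma_\lambda:\gamma<\beta\}=\Psi_\lambda^{\downarrow}=P_\lambda$.

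The same misassignment also blocks the universal-property step. Given a closed $(\Gamma^*,\psi^*,P^*)$, one has $P^*=\Psi^*$ (downward closed), and $i_\lambda(\beta)\notin P^*$ since $\beta\notin P_\lambda$ and the embedding respects $P$. Hence $i_\lambda(\beta)\in\big((\Gamma^*)^{>}\big)'$, so the unique $\alpha^*\in(\Gamma^*)^{\ne}$ with $(\alpha^*)'=i_\lambda(\beta)$ is necessarily \emph{positive}; there is no negative element to feed into the (ii)-clause of the $\alpha<0$ variant. (The $\alpha<0$ variant is the right tool precisely when the gap lies in $P_\lambda$, but you have already routed that situation through case~(a) and Lemma~\ref{extpsi}.) Once the sign in case~(b) is corrected, the rest of your outline --- the book-keeping, the analogous and correct restriction check for Lemma~\ref{extas2}, and the Hahn-type preservation via ``The case of Hahn type'' together with the last sentence of Lemma~\ref{extpsi} --- lines up with the paper's brief proof.
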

\begin{proof} This is a straightforward consequence of Lemmas~\ref{extension1}, \ref{extas2}, and \ref{extpsi}, using for the second statement also the
remarks in ``The case of Hahn type''.  
\end{proof}

\noindent
We consider $H$-triples as $\mathcal{L}_{\k}$-structures where $\mathcal{L}_{\k}$ is
the natural language of ordered vector spaces
over $\k$, augmented by a constant symbol $\infty$, a unary function symbol~$\psi$, and a unary relation symbol $P$. The underlying set of an $H$-triple 
$(\Gamma, \psi, P)$, when construed as an $\mathcal{L}_{\k}$-structure, is $\Gamma_{\infty}$ rather than $\Gamma$, and
the symbols of $\mathcal{L}_{\k}$ are interpreted in $(\Gamma, \psi, P)$ as usual, with $\infty$ serving as a default value: $$\psi(0)\  =\ \psi(\infty)\ =\ \gamma+\infty\ =\ \infty+\gamma\ 
=\ \infty + \infty\ =\ -\infty\ =\ c\infty\ =\ \infty$$ for $\gamma\in \Gamma$ and $c\in \k$. Also $0^\dagger:=\infty$ for the zero element $0\in \Gamma$, so $\Gamma^\dagger=\Psi\cup\{\infty\}$.

\begin{theorem}\label{hclQE} The $\mathcal{L}_{\k}$-theory of closed $H$-triples over $\k$ has $\operatorname{QE}$.
\end{theorem}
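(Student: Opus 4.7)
The plan is to apply the standard embedding criterion for quantifier elimination. Take two closed $H$-triples $\mathbf{G}=(\Gamma,\psi,P)$ and $\mathbf{G}^*=(\Gamma^*,\psi^*,P^*)$ over $\k$, with $\mathbf{G}^*$ $\abs{\Gamma}^+$-saturated, and an $\mathcal{L}_\k$-embedding $i\colon\mathbf{A}\to\mathbf{G}^*$ from a substructure $\mathbf{A}$ of $\mathbf{G}$; the goal is to extend $i$ to an embedding $\mathbf{G}\to\mathbf{G}^*$. By Zorn's lemma take $i$ maximal with domain $\Gamma_0\subseteq\Gamma$, and seek a contradiction from $\Gamma_0\ne\Gamma$.

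First I reduce to $\Gamma_0$ being a closed $H$-triple over $\k$: by Corollary~\ref{hcl} an $H$-closure of $\Gamma_0$ inside $\Gamma$ exists, and its universal property, together with the fact that $\mathbf{G}^*$ is already closed, lets us extend $i$ into $\mathbf{G}^*$. Now pick $\beta\in\Gamma\setminus\Gamma_0$ and search for $\beta^*\in\Gamma^*$ such that $\beta\mapsto\beta^*$ extends $i$ to an $\mathcal{L}_\k$-embedding on $\Gamma_0+\k\beta$.

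To locate $\beta^*$, normalize via Lemma~\ref{g1g2}: either some $\alpha_0\in\Gamma_0$ yields $(\beta-\alpha_0)^\dagger\in\Gamma_0$, in which case replace $\beta$ by $\beta-\alpha_0$ so that $\beta^\dagger\in\Gamma_0$, or no such $\alpha_0$ exists and Lemma~\ref{g1g2} produces a maximizer of $(\beta-\alpha)^\dagger$ over $\alpha\in\Gamma_0$. In the first subcase $i(\beta^\dagger)\in\Psi^*$ because $\mathbf{G}^*$ is closed; pick $\delta^*\in\Gamma^*$ with $\psi^*(\delta^*)=i(\beta^\dagger)$ and look for $\beta^*$ realizing the image cut of $\beta$ in $i(\Gamma_0)$ with the same $\psi^*$-value and matching $P^*$-membership. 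Existence of such $\beta^*$ is delivered by combining saturation of $\mathbf{G}^*$ with Lemmas~\ref{extension5} and~\ref{extpsi}: any finite fragment of the relevant type is realized in some concrete $H$-triple extension of $i(\Gamma_0)$ built by those lemmas, the universal properties let that extension embed into $\mathbf{G}^*$, so the fragment is realized in $\mathbf{G}^*$.

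The main obstacle is the second subcase, where $\beta^\dagger\notin\Gamma_0$: now adjoining $\beta$ implicitly adjoins the whole tower $\beta^\dagger,\beta^{\dagger\dagger},\ldots$, and the embedding must remain coherent. Here Lemmas~\ref{cutinheritance} and~\ref{b1b2} do the decisive work. Lemma~\ref{cutinheritance} shows that once $\beta^*$ realizes the image cut of $\beta$ over $\Gamma_0$ (the required cofinality of $\Gamma_0^{<}$ in $(\Gamma_0+\k\beta^\dagger)^{<}$ comes from closedness of $\Gamma_0$), the iterate $(\beta^*)^\dagger$ realizes the correct cut of $\beta^\dagger$ over $\Gamma_0$ automatically. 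Lemma~\ref{b1b2} shows that iterating the normalization $\beta_{n+1}:=\beta_n^\dagger-\alpha_{n+1}$ (with $\alpha_{n+1}\in\Gamma_0$ chosen via Lemma~\ref{g1g2}) produces a strictly increasing chain of $\psi$-values $\beta_0^\dagger<\beta_1^\dagger<\cdots$, which by closedness of $\Gamma_0$ and the downward closedness of $\Psi_0:=\psi(\Gamma_0^{\ne})$ must terminate: some $\beta_n^\dagger\in\Gamma_0$, reducing back to the first subcase. Unwinding the iteration to reconstruct $\beta^*$ from the terminal match, and verifying via \eqref{eq:psialpha} that the resulting map is an $\mathcal{L}_\k$-embedding on $\Gamma_0+\k\beta$, contradicts maximality and completes the QE.
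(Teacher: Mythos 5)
Your overall strategy (embedding criterion, reduction to a closed substructure via Corollary~\ref{hcl}, normalization via Lemma~\ref{g1g2}, use of Lemmas~\ref{cutinheritance} and~\ref{b1b2}) matches the paper's. But there is a fatal gap precisely where the real work begins: you assert that the strictly increasing chain $\beta_0^\dagger<\beta_1^\dagger<\cdots$ ``must terminate: some $\beta_n^\dagger\in\Gamma_0$, reducing back to the first subcase.'' This is false, and neither closedness of $\Gamma_0$ nor downward closedness of $\Psi_0$ forces it. The elements $\beta_n^\dagger$ lie outside $\Gamma_0$ and realize strictly increasing cuts in $\Gamma_0$; since $\Gamma_0$ is a densely ordered $\k$-vector space there are infinitely many distinct cuts below any fixed element of $\Psi_0$ (the common bound coming from $|\beta_n|\ge\alpha$ for some $\alpha\in\Gamma_0^{\ne}$), so an infinite chain is perfectly possible. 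In fact, in the paper's Lemma~\ref{cinf}, under the standing hypothesis that $(\Gamma_0+\k\gamma)^\dagger\ne\Gamma_0^\dagger$ for every $\gamma$ outside $\Gamma_0$, the chain provably does \emph{not} terminate: at each stage one finds $\alpha_{n+1}\in\Gamma_0$ with $(\beta_n^\dagger-\alpha_{n+1})^\dagger\notin\Gamma_0^\dagger$. The heart of the proof is the treatment of this infinite tower: one shows $\Gamma_0\langle\beta\rangle=\Gamma_0\oplus\bigoplus_n\k\beta_n$, picks $\beta_*\in\Gamma_*$ realizing the cut of $\beta$, and proves by induction (using Lemmas~\ref{cutinheritance} and~\ref{b1b2} exactly as you cite them) that the recursively defined $\beta_{*n}$ realize the right cuts and archimedean classes, so that the partial embeddings $e_n$ cohere and their union is the required embedding. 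Your argument stops at the point where that infinite construction must start.

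A secondary issue: your case split is not the right one. The correct dichotomy is whether $(\beta-\alpha)^\dagger\in\Gamma_0$ for \emph{all} $\alpha\in\Gamma_0$ (equivalently $(\Gamma_0+\k\beta)^\dagger=\Gamma_0^\dagger$, which is what makes $\Gamma_0\langle\beta\rangle=\Gamma_0+\k\beta$ and is the hypothesis of Lemma~\ref{noextradaggers}) versus \emph{some} $\alpha_0$ with $(\beta-\alpha_0)^\dagger\notin\Gamma_0$. Your ``some $\alpha_0$ yields $(\beta-\alpha_0)^\dagger\in\Gamma_0$'' is strictly weaker: even if one translate of $\beta$ has $\psi$-value in $\Gamma_0$, another may not, and then $\psi$ escapes $\Gamma_0+\k\beta$, so that set is not a substructure and the first-subcase analysis (which also glosses over the six cases in the paper's Lemma~\ref{noextradaggers}) does not apply.
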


\subsection*{The proof of QE} Towards Theorem~\ref{hclQE}
we consider an $H$-triple $(\Gamma, \psi, P)$ over~$\k$ and
closed $H$-triples $(\Gamma_1, \psi_1, P_1)$ and $(\Gamma_*, \psi_*, P_*)$ over $\k$ that extend $(\Gamma, \psi, P)$, and such that $(\Gamma_*, \psi_*, P_*)$ is $|\Gamma|^+$-saturated. For $\gamma\in \Gamma_1$ we
let $\big(\Gamma\<\gamma\>, \psi_{\gamma}\big)$ be the $H$-couple over $\k$ generated by
$\Gamma\cup \{\gamma\}$ in $(\Gamma_1, \psi_1)$, and set 
$P_{\gamma}:= P_1\cap \Gamma\<\gamma\>$. 

Let $\beta\in \Gamma_1\setminus \Gamma$.
Theorem~\ref{hclQE} 
follows if  we can show that under these assumptions~$\big(\Gamma\<\beta\>,\psi_{\beta}, P_{\beta}\big)$ can be embedded over $\Gamma$ into $(\Gamma_*, \psi_*, P_*)$. We first do this in a situation that may seem rather special:

\begin{lemma}\label{noextradaggers} Suppose $(\Gamma, \psi)$ has asymptotic integration and $(\Gamma+ \k\beta)^\dagger=\Gamma^\dagger$.
Then~$\big(\Gamma\<\beta\>,\psi_{\beta}, P_{\beta}\big)$ can be embedded over $\Gamma$ into $(\Gamma_*, \psi_*, P_*)$.
\end{lemma}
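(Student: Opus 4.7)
The plan is to find $\beta_{*}\in\Gamma_{*}$ such that the map $\gamma+c\beta\mapsto\gamma+c\beta_{*}$ (for $\gamma\in\Gamma$, $c\in\k$) defines an $\mathcal{L}_{\k}$-embedding of $\big(\Gamma\<\beta\>,\psi_{\beta},P_{\beta}\big)$ over $\Gamma$ into $(\Gamma_{*},\psi_{*},P_{*})$. The first observation is that $\Gamma\<\beta\>=\Gamma+\k\beta$: the hypothesis $(\Gamma+\k\beta)^\dagger=\Gamma^\dagger\subseteq\Gamma$ says exactly that $\Gamma+\k\beta$ is closed under $\psi_{1}$, and being a $\k$-linear subspace it is closed under the other operations, so it already is the $H$-couple generated by $\Gamma\cup\{\beta\}$. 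Thus the required $\beta_{*}$ must satisfy, for every $\gamma\in\Gamma$ and $c\in\k^{\times}$ with $\gamma+c\beta\ne 0$: (a) $\gamma+c\beta_{*}$ realizes the same cut in $\Gamma$ as $\gamma+c\beta$; (b) $\psi_{*}(\gamma+c\beta_{*})=\psi_{1}(\gamma+c\beta)\in\Psi$; and (c) $\gamma+c\beta_{*}\in P_{*}$ iff $\gamma+c\beta\in P_{\beta}$.

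These conditions form a partial type $p(x)$ over $\Gamma$ of cardinality at most $|\Gamma|$, so by $|\Gamma|^{+}$-saturation of $(\Gamma_{*},\psi_{*},P_{*})$ it suffices to show $p$ is finitely satisfiable there. For consistency I would split on whether $[\beta]_{\k}\in[\Gamma]_{\k}$. In the case $[\beta]_{\k}\notin[\Gamma]_{\k}$, let $D$ be the cut of $[\beta]_{\k}$ in $[\Gamma^{\ne}]_{\k}$ and set $\delta:=\psi_{1}(\beta)\in\Psi$; the pair $(D,\delta)$ fulfills the hypotheses of Lemma~\ref{extension5} by (HC). Crucially, because $[\beta]_{\k}\notin[\Gamma]_{\k}$ no archimedean cancellation occurs in $\gamma+c\beta$, giving $\psi_{1}(\gamma+c\beta)=\min\!\bigl(\psi(\gamma),\delta\bigr)$, which matches the formula for the abstract extension $(\Gamma\oplus\k\alpha,\psi^{\alpha})$ produced by Lemma~\ref{extension5}. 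Thus $(\Gamma+\k\beta,\psi_{\beta})\cong(\Gamma\oplus\k\alpha,\psi^{\alpha})$ over $(\Gamma,\psi)$ via $\beta\mapsto\alpha$, and Lemma~\ref{extension5}(ii) reduces the embedding into $(\Gamma_{*},\psi_{*})$ to exhibiting some $\alpha_{*}\in\Gamma_{*}^{>}$ with $[\alpha_{*}]_{\k}$ realizing $D$ and $\psi_{*}(\alpha_{*})=\delta$; this element is supplied by saturation.

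In the case $[\beta]_{\k}\in[\Gamma]_{\k}$, pick $\delta_{0}\in\Gamma^{\ne}$ with $[\delta_{0}]_{\k}=[\beta]_{\k}$ and examine $\beta-c\delta_{0}$ for $c\in\k$: the hypothesis $(\Gamma+\k\beta)^\dagger=\Gamma^\dagger$ passes to these differences, and if some such difference has strictly smaller $\k$-archimedean class we descend and iterate, eventually landing in Case A. The ``non-Hahn'' subcase, where no value of $c$ reduces the class, is where the real obstacle lies and requires a separate direct analysis of the type realized by $\beta$ over $(\Gamma,\psi)$, likely via a compactness or cofinality argument. Finally, condition (c) is essentially automatic once (a) and (b) are secured: since $(\Gamma_{1},\psi_{1})$ and $(\Gamma_{*},\psi_{*})$ are both closed we have $P_{1}=\Psi_{1}^{\downarrow}$ and $P_{*}=\Psi_{*}^{\downarrow}$, so $P_{\beta}$-membership of $\gamma+c\beta$ is determined by its position relative to the downward closure of $\Psi$, a condition preserved by any order- and $\psi$-respecting map. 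The hardest step will be consistency in Case B under failure of the Hahn property.
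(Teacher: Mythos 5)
Your Case~A (where $[\beta]_{\k}\notin[\Gamma]_{\k}$) is essentially correct and matches the paper's Cases~4--6: realize the class-cut $D$ and the $\psi$-value $\delta=\psi_1(\beta)\in\Gamma^\dagger$ by saturation, then invoke Lemma~\ref{extension5}(ii). But your Case~B is where the proposal fails, and for a reason more basic than the non-Hahn issue you flag. When $[\Gamma+\k\beta]_{\k}=[\Gamma]_{\k}$ (which is a genuinely possible configuration, and the one the paper handles in its Cases~1--3), your ``descend and iterate'' procedure can never exit $[\Gamma]_{\k}$, so it never lands in Case~A; and even when $[\Gamma+\k\beta]_{\k}\ne[\Gamma]_{\k}$, the right move is a single translation by an element of $\Gamma$ to reach a representative with a new class, not an iteration (which in any case need not terminate since $[\Gamma^{\ne}]_{\k}$ is not well-ordered). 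The missing idea in the hard case $[\Gamma+\k\beta]_{\k}=[\Gamma]_{\k}$ is this: here every $\gamma\in(\Gamma+\k\beta)^{\ne}$ has $[\gamma]_\k\in[\Gamma]_\k$, hence $\gamma^\dagger\in\Gamma^\dagger$ is already determined by its $\Gamma$-archimedean class, so \emph{any} embedding of ordered $\k$-vector spaces over $\Gamma$ automatically commutes with $\psi$. One therefore need only realize the cut of a single suitable element (the paper uses \cite[2.4.16]{ADH}), and the density statement of Corollary~\ref{addgapcor} makes the ``gap realizer'' subcase tractable.

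Your claim that condition~(c) ``is essentially automatic'' is also wrong, and this is precisely why the paper's Cases~1 and~2 exist as separate cases. You argue that $P_\beta$-membership is determined by position relative to $\Psi^{\downarrow}$, but $P_\beta=P_1\cap\Gamma\<\beta\>$ and $P_1=\Psi_1^{\downarrow}$ involve the much larger $\Psi_1$, not $\Psi$. When $\Gamma\<\beta\>$ contains an element $\eta$ with $\Psi<\eta<(\Gamma^{>})'$, whether $\eta\in P_1$ or $\eta\notin P_1$ is a bona fide binary datum not visible from $(\Gamma\<\beta\>,\psi_\beta)$ alone; one must ensure the same datum holds for the target $\eta_*\in\Gamma_*$ (this is exactly the split between the paper's Case~1 and Case~2). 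So your conditions (a) and (b) do not entail (c), and you must explicitly add a condition realizing the correct $H$-cut, which is easy by saturation once recognized but cannot be omitted.
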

\begin{proof} From 
$(\Gamma+ \k\beta)^\dagger=\Gamma^\dagger$ we get
$\Gamma\<\beta\>=\Gamma+\k\beta$. We have six cases:

\medskip\noindent
{\em Case~1}: $(\Gamma^{>})^\dagger < \eta < (\Gamma^{>})'$
and $\eta\in P_1$ for some $\eta\in \Gamma+\k\beta$. Fix such $\eta$. Then~$\Gamma$ is dense in $\Gamma+\k\eta=\Gamma+\k\beta$, by Corollary~\ref{addgapcor}, so
$[\Gamma+\k\beta]_{\k}=[\Gamma]_{\k}$. Moreover, there is no $\eta_1\ne \eta$ in $\Gamma+\k\beta$ with $(\Gamma^{>})^\dagger< \eta_1 < (\Gamma^{>})'$. 
By saturation we can take~$\eta_*\in \Gamma_*$ such that
$(\Gamma^{>})^\dagger < \eta_* < (\Gamma^{>})'$ and
$\eta_*\in P_*$. Then 
\cite[2.4.16]{ADH} yields
an embedding~$i \colon \Gamma+ \k\beta\to \Gamma_*$
of ordered vector spaces over $\k$ that is the identity on~$\Gamma$
with $i(\eta)=\eta_*$. This $i$ embeds 
$\big(\Gamma\<\beta\>,\psi_{\beta}, P_{\beta}\big)$ into
$(\Gamma_*, \psi_*, P_*)$.

\medskip\noindent
{\em Case~2}: $(\Gamma^{>})^\dagger < \eta < (\Gamma^{>})'$
and $\eta\notin P_1$ for some $\eta\in \Gamma+\k\beta$. Fixing such $\eta$, we repeat the argument of Case~1, except that now $\eta_*\notin P_*$ instead of $\eta_*\in P_*$.  
 
\medskip\noindent
{\em Case~3}: $[\Gamma+\k\beta]_{\k}=[\Gamma]_{\k}$, but there is no 
$\eta\in \Gamma+\k\beta$ with 
$(\Gamma^{>})^\dagger < \eta < (\Gamma^{>})'$. Then
$P_{\beta}$ is the only $H$-cut of $\Gamma\<\beta\>$.
Saturation yields $\beta_*\in \Gamma_*$ realizing the same cut in
$\Gamma$ as $\beta$. Then \cite[2.4.16]{ADH} yields
an embedding $i \colon \Gamma+ \k\beta\to \Gamma_*$
of ordered vector spaces over $\k$ that is the identity on 
$\Gamma$
with $i(\beta)=\beta_*$. For 
$\gamma\in \Gamma+\k\beta$ we have
$\big[i(\gamma)\big]_{\k}=[\gamma]_{\k}\in [\Gamma]_{\k}$, so 
$i(\gamma)^\dagger=\gamma^\dagger\in \Gamma^\dagger$. Thus $i$ embeds $\big(\Gamma\<\beta\>,\psi_{\beta}, P_{\beta}\big)$ into~$(\Gamma_*, \psi_*, P_*)$. 

\medskip\noindent
Assume next that we are not in Case~1, or Case~2, or Case~3. Then $[\Gamma+ \k\beta]_{\k}\ne[\Gamma]_{\k}$. Take $\gamma\in \Gamma\<\beta\>\setminus \Gamma$ such that $\gamma>0$ and 
$[\gamma]_{\k}\notin [\Gamma]_{\k}$, so $\big[\Gamma\<\beta\>\big]_{\k}=[\Gamma]_{\k} \cup \big\{[\gamma]_{\k}\big\}$. We are not in 
Case~1 or Case~2, so $P_{\beta}$ is the only $H$-cut of $\big(\Gamma\<\beta\>, \psi_{\beta}\big)$.  Let $D$ be the cut in $\Gamma$ realized by $\gamma$ and $E:= \Gamma\setminus D$, so
$D<\gamma < E$. Then $D$ has no largest element, and so $D\cap \Gamma^{>}\ne \emptyset$: if $d=\max D$, then we have $0 <\gamma-d< \Gamma^{>}$, and thus~$(\Gamma^{>})^\dagger < (\gamma-d)^\dagger < (\Gamma^{>})'$, contradicting that we are not in Case~1. Likewise,~$E$~has no least element. Here are the remaining cases:

\medskip\noindent
{\em Case~4}: 
$\gamma^\dagger\in (D^{>0})^\dagger\cap E^\dagger$. 
Saturation yields $\gamma_*\in \Gamma_*$ realizing the same cut in $\Gamma$ as~$\gamma$. Then $\gamma_*^\dagger=\gamma^\dagger\in (D^{>0})^\dagger$, and \cite[2.4.16]{ADH} yields
an embedding $i \colon \Gamma+ \k\beta\to \Gamma_*$
of ordered vector spaces over $\k$ that is the identity on 
$\Gamma$
with $i(\gamma)=\gamma_*$; this
 $i$ embeds $\big(\Gamma\<\beta\>,\psi_{\beta}, P_{\beta}\big)$ into $(\Gamma_*, \psi_*, P_*)$. 
 
\medskip\noindent
{\em Case~5}: 
$\gamma^\dagger\in (D^{>0})^\dagger > E^\dagger$. 
Then saturation yields a $\gamma_*\in \Gamma_{*}$
realizing the same cut in $\Gamma$ as $\gamma$, with $\gamma_*^\dagger=\gamma^\dagger$. By \cite[2.4.16]{ADH} this yields
an embedding $i \colon \Gamma+ \k\beta\to \Gamma_*$
of ordered vector spaces over $\k$ that is the identity on 
$\Gamma$
with $i(\gamma)=\gamma_*$, and so as before
 $i$ embeds $\big(\Gamma\<\beta\>, \psi_{\beta}, P_{\beta}\big)$ into
$(\Gamma_*, \psi_*, P_*)$.

\medskip\noindent
{\em Case~6}: $\gamma^\dagger\in E^\dagger < (D^{>0})^\dagger$.
This is handled just like Case~5. 
\end{proof}

\noindent
Note that Cases~4,~5,~6 in the proof above do not occur if 
$(\Gamma_1,\psi_1)$ is of Hahn type.

\medskip\noindent
In view of
Corollary~\ref{hcl} and Lemma~\ref{noextradaggers}, Theorem~\ref{hclQE} reduces to: 

\begin{lemma}\label{cinf} Suppose $(\Gamma,\psi)$ is closed and 
$(\Gamma+ \k\gamma)^\dagger\ne \Gamma^\dagger$ for all $\gamma\in \Gamma_1\setminus \Gamma$. Then~$\big(\Gamma\<\beta\>, \psi_{\beta}, P_{\beta}\big)$ embeds into $(\Gamma_*, \psi_*, P_*)$ over 
$\Gamma$.
\end{lemma}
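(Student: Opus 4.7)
The plan is to iterate the hypothesis to produce an infinite sequence of logarithmic-derivative iterates in $\Gamma_1 \setminus \Gamma$, then appeal to $|\Gamma|^+$-saturation together with Lemma~\ref{cutinheritance} to realize a matching element $\beta_* \in \Gamma_*$ and all of its iterates, and finally read off the embedding.

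First I would construct the iteration. By the hypothesis applied to $\gamma = \beta$ and the $\k$-scale invariance of $\psi$, there is some $\alpha_0 \in \Gamma$ with $(\beta - \alpha_0)^\dagger \notin \Gamma$. Set $\beta_0 := \beta - \alpha_0$; Lemma~\ref{g1g2} then identifies $\beta_0^\dagger$ as the maximum of $\{(\beta-\alpha)^\dagger : \alpha \in \Gamma\}$. Since $\beta_0^\dagger \in \Gamma_1 \setminus \Gamma$, the hypothesis applies again to give $\alpha_1 \in \Gamma$ with $(\beta_0^\dagger - \alpha_1)^\dagger \notin \Gamma$, and continuing by recursion yields sequences $(\alpha_n)$ in $\Gamma$ and $(\beta_n)$ in $\Gamma\<\beta\> \setminus \Gamma$ with $\beta_{n+1} := \beta_n^\dagger - \alpha_{n+1}$ and $\beta_n^\dagger \notin \Gamma$ throughout; Lemma~\ref{b1b2} gives $\beta_n^\dagger < \beta_{n+1}^\dagger$. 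Each $\beta_n^\dagger$ realizes the largest available cut in $\{(\beta_{n-1}^\dagger - \alpha)^\dagger : \alpha \in \Gamma\}$, so the sequence of cuts these iterates realize in $\Gamma$ is canonically pinned down by the choice of the $\alpha_n$'s.

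By $|\Gamma|^+$-saturation, I then pick $\beta_* \in \Gamma_*$ realizing the same cut in $\Gamma$ as $\beta$; finite satisfiability of this type is witnessed by $\beta$ itself in $\Gamma_1$. Setting $\beta_{*,0} := \beta_* - \alpha_0$ and invoking Lemma~\ref{cutinheritance} (with $(\Gamma,\psi)$ the closed $H$-couple on the base) gives $\beta_{*,0}^\dagger \notin \Gamma$ realizing the same cut in $\Gamma$ as $\beta_0^\dagger$. The cofinality hypothesis of Lemma~\ref{cutinheritance} is verified by a case analysis using that $(\Gamma,\psi)$ is closed: either $\beta_0^\dagger \le \gamma^\dagger$ for some $\gamma \in \Gamma^{\ne}$ (so $\Psi$'s downward closedness gives the cofinality) or $\Psi < \beta_0^\dagger < (\Gamma^{>})'$, both of which are immediate. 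Iterating this propagation shows that every $\beta_{*,n}$ and $\beta_{*,n}^\dagger$ realizes the same cut in $\Gamma$ as $\beta_n$ and $\beta_n^\dagger$, respectively.

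Finally I would assemble the embedding: sending $\beta \mapsto \beta_*$ (equivalently $\beta_n \mapsto \beta_{*,n}$) extends via \cite[2.4.16]{ADH} to an order-preserving $\k$-linear embedding of $\Gamma + \k\beta + \sum_n \k \beta_n^\dagger$ into $\Gamma_*$ fixing $\Gamma$; by construction it preserves $\psi$ on these generators, and since the $\psi$-value of any $\gamma = \alpha + c_1\beta_{n_1} + \cdots + c_k\beta_{n_k}$ with $c_i \in \k$ depends only on its $\k$-archimedean class, $\psi$-preservation extends throughout. Because the hypothesis $(\Gamma + \k\gamma)^\dagger \ne \Gamma^\dagger$ for all $\gamma \in \Gamma_1 \setminus \Gamma$ forces every new $\psi$-value introduced at any stage to coincide, up to a $\Gamma$-shift, with one of the $\beta_n^\dagger$, the whole of $\Gamma\<\beta\>$ is captured by $\Gamma$ together with the $\beta_n^\dagger$'s. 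Preservation of $P$ is then automatic since both $(\Gamma_1,\psi_1)$ and $(\Gamma_*,\psi_*)$ are closed, so that $P_1 = \Psi_1$ and $P_* = \Psi_*$ are intrinsically determined by $\psi$. The main obstacle will be the careful bookkeeping needed to confirm that $\Gamma\<\beta\>$ really is generated as an ordered $\k$-vector space by $\Gamma$ and the $\beta_n^\dagger$'s, and to discharge the cofinality hypothesis of Lemma~\ref{cutinheritance} cleanly at each iteration; both reduce to routine case analysis using that $(\Gamma,\psi)$ is closed.
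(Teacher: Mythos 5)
Your approach is fundamentally the same as the paper's: iterate the hypothesis to produce the sequences $(\alpha_n)$, $(\beta_n)$ with $\beta_n^\dagger\notin\Gamma$, use Lemma~\ref{b1b2} for monotonicity, use saturation to get $\beta_*$ realizing the same cut as $\beta$, and use Lemma~\ref{cutinheritance} to propagate cut-matching along the $\dagger$-iterates before assembling the embedding via \cite[2.4.16]{ADH}. The decomposition $\Gamma\<\beta\>=\Gamma\oplus\bigoplus_n\k\beta_n$ and the final passage to the $H$-triple level are also as in the paper.

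The genuine gap is your treatment of the cofinality hypothesis of Lemma~\ref{cutinheritance}. You offer a per-stage dichotomy (either $\beta_n^\dagger\le\gamma^\dagger$ for some $\gamma\in\Gamma^{\ne}$, or $\Psi<\beta_n^\dagger<(\Gamma^{>})'$) and claim both cases are \emph{immediate}, with the first supposedly following from downward closedness of $\Psi$. This doesn't hold up: downward closedness of $\Psi$ in $\Gamma$ says nothing about whether $\Gamma^{<}$ is cofinal in $(\Gamma+\k\beta_n^\dagger)^{<}$, and to rule out a small element $\delta\in(\Gamma+\k\beta_n^\dagger)^{\ne}$ with $0<|\delta|<\Gamma^{>}$ you would have to argue (using the lemma's hypothesis and Corollary~\ref{addgapcor}) that $\delta^\dagger$ cannot realize the $H$-cut. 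The paper avoids all this by a single upfront observation: any $\gamma\in\Gamma_1\setminus\Gamma$ with $\Psi<\gamma<(\Gamma^{>})'$ would give $(\Gamma+\k\gamma)^\dagger=\Gamma^\dagger$ (by Corollary~\ref{addgapcor}), contradicting the hypothesis; hence no such $\gamma$ exists and $\Gamma^{<}$ is cofinal in all of $\Gamma_1^{<}$, which verifies the cofinality hypothesis of Lemma~\ref{cutinheritance} automatically at every stage. You should make this global observation first rather than attempting a local case analysis. A second, smaller imprecision: to conclude that $P_\beta$ is determined by $\psi_\beta$, the relevant fact is that $(\Gamma\<\beta\>,\psi_\beta)$ itself has asymptotic integration (so $\Psi_\beta^{\downarrow}$ is its only $H$-cut); appealing to closedness of the ambient couples and writing ``$P_1=\Psi_1$'' (rather than $\Psi_1^{\downarrow}$) misstates what is needed, though the conclusion is right.
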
 
\begin{proof} If $\gamma\in \Gamma_1\setminus \Gamma$ and 
$\Psi< \gamma < (\Gamma^{>})'$, then $(\Gamma+ \k\gamma)^\dagger= \Gamma^\dagger$, contradicting our assumption. Hence there is no
such $\gamma$. It follows that $\Gamma^{<}$ is cofinal in 
$\Gamma_1^{<}$.

Take $\alpha_0\in \Gamma$ such that $(\beta-\alpha_0)^\dagger\notin \Gamma^\dagger$. Since $(\Gamma,\psi)$ is closed, this means ${(\beta-\alpha_0)^\dagger\notin \Gamma}$. Next take 
$\alpha_1\in \Gamma$ with $\big((\beta-\alpha_0)^\dagger-\alpha_1)^\dagger\notin \Gamma^\dagger$. Continuing this way we obtain sequences $\alpha_0, \alpha_1, \alpha_2,\dots$ in $\Gamma$ 
and $\beta_0, \beta_1, \beta_2,\dots$ in $\Gamma\<\beta\>\setminus \Gamma$
with
$$\beta_0\ =\ \beta-\alpha_0, \qquad \beta_{n+1}\ =\ \beta_{n}^\dagger-\alpha_{n+1} \text{ for all }n,$$
such that $\beta_n^\dagger\notin \Gamma$ for all $n$. By Lemma~\ref{b1b2} we have $\beta_0^\dagger < \beta_1^\dagger < \beta_2^\dagger < \cdots$. It follows that $[\beta_0]_{\k} >[\beta_1]_{\k} >[\beta_2]_{\k} > \cdots$, with $[\beta_n]_{\k}\notin [\Gamma]_{\k}$ for all $n$. In particular,
the family $(\beta_n)$ is $\k$-linearly
independent over $\Gamma$, and 
$$\Gamma\<\beta\>\ =\ \Gamma\oplus \k\beta_0 \oplus \k\beta_1 \oplus \k\beta_2\oplus \cdots.$$
By saturation we can take $\beta_*\in \Gamma_*\setminus \Gamma$
realizing the same cut in $\Gamma$ as $\beta$. This gives an embedding $e_0\colon \Gamma\oplus \k \beta\to \Gamma_*$ of ordered vector spaces over $\k$ that is the identity on $\Gamma$ and sends $\beta$ to $\beta_*$.
We define recursively $\beta_{*n}\in (\Gamma_*)_\infty$  by
$$\beta_{*0}:= \beta_*-\alpha_0, \qquad \beta_{*(n+1)} := \beta_{*n}^\dagger-\alpha_{n+1}.$$
Assume inductively that $\beta_{*0},\dots,\beta_{*n}\in \Gamma_{*}$ and that we have an embedding
$$e_n\ :\  \Gamma+\k\beta_0+ \cdots + \k\beta_n \to \Gamma_{*}$$
of ordered vector spaces over $\k$ that is the identity on
$\Gamma$ and sends $\beta_i$ to $\beta_{*i}$ for~$i=0,\dots,n$.
Then $\beta_n$ and $\beta_{*n}$ realize the same cut in $\Gamma$,
and so $\beta_{*n}^\dagger\notin \Gamma$, and $\beta_n^\dagger$ and~$\beta_{*n}^\dagger$ realize the same cut in 
$\Gamma$ by Lemma~\ref{cutinheritance}. Hence $\beta_{n+1}$ and~$\beta_{*(n+1)}\in \Gamma_*\setminus \Gamma$
realize the same cut in $\Gamma$. Moreover,
$\beta^\dagger_{*n} < \beta_{*(n+1)}^\dagger$ by Lemma~\ref{b1b2}.  
We have
$$[\Gamma+\k\beta_0+\cdots +\k\beta_n]_{\k}\ =\ [\Gamma]_{\k}\cup\big\{[\beta_0]_{\k},\dots,[\beta_n]_{\k}\big\},\quad [\beta_0]_{\k} >\cdots >[\beta_n]_{\k} > [\beta_{n+1}]_{\k}.
$$
Let $D$ be the cut realized by $[\beta_{n+1}]_{\k}$ in 
$[\Gamma+\k\beta_0+\cdots +\k\beta_n]_{\k}$. Then the above together with $[\beta_{*n}]_{\k} > [\beta_{*(n+1)}]_{\k}$ shows 
that $[\beta_{*(n+1)}]_{\k}$ realizes the $e_n$-image of the cut $D$ 
in $\big[e_n(\Gamma+\k\beta_0+\cdots +\k\beta_n)\big]_{\k}$. 
Hence $e_n$ extends to an embedding
$$e_{n+1}\ :\  \Gamma+\k\beta_1+ \cdots + \k\beta_n+\k\beta_{n+1} \to \Gamma_{*}$$
of ordered vector spaces over $\k$ that is the identity on
$\Gamma$ and sends $\beta_{n+1}$ to $\beta_{*(n+1)}$.
This leads to a map $e\colon \Gamma\<\beta\>\to \Gamma_*$
that extends each $e_n$, and is therefore an embedding
of $H$-couples over $\k$. Since $P_{\beta}$ is the only
$H$-cut in $\Gamma\<\beta\>$, $e$ embeds~$\big(\Gamma\<\beta\>, \psi_{\beta}, P_{\beta}\big)$ into $(\Gamma_*, \psi_*, P_*)$ over $\Gamma$.
\end{proof}

\noindent
This concludes the proof of Theorem~\ref{hclQE}.  \qed

\medskip
\noindent
Let $T_{\k}$ be the $\mathcal{L}_{\k}$-theory of closed
$H$-triples over $\k$. Let $T_{\k}^{>}$ be the $\mathcal{L}_{\k}$-theory whose models are the closed $H$-triples $(\Gamma,\psi, P)$ over $\k$
with $0\in P$, equivalently~$\Psi\cap \Gamma^{>}\ne \emptyset$.  Let $T_{\k}^{<}$ be the $\mathcal{L}_{\k}$-theory whose models are the closed $H$-triples $(\Gamma,\psi, P)$ over $\k$
with $0\notin P$, equivalently $\Psi\subseteq \Gamma^{<}$. 

\begin{cor} The $\mathcal{L}_{\k}$-theory $T_{\k}$ has exactly two
completions: $T_{\k}^{>}$ and $T_{\k}^{<}$.
\end{cor}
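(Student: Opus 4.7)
The plan is to combine the quantifier elimination of Theorem~\ref{hclQE} with a direct analysis of where $0$ sits relative to $P$. First I will establish the dichotomy: in any closed $H$-triple $(\Gamma,\psi,P)$ over $\k$ we have $P=\Psi$, because $\Psi$ is the unique $H$-cut in a closed $H$-couple. Since $\Psi$ is downward closed, $0\in\Psi$ is equivalent to $\Psi\cap\Gamma^{>}\ne\emptyset$ (one direction is downward closure; the other uses that $\Psi$ has no largest element under asymptotic integration, so any element of $\Psi$ is strictly below some element of $\Psi$). Dually, $0\notin\Psi$ is equivalent to $\Psi\subseteq\Gamma^{<}$. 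These two cases are mutually exclusive and exhaustive, and are distinguished by the atomic $\mathcal{L}_{\k}$-sentence $P(0)$; hence every model of $T_{\k}$ satisfies exactly one of $T_{\k}^{>}$ and $T_{\k}^{<}$.

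Next I will check that each of $T_{\k}^{>}$ and $T_{\k}^{<}$ is complete. By Theorem~\ref{hclQE}, every $\mathcal{L}_{\k}$-sentence is equivalent modulo $T_{\k}$ to a quantifier-free $\mathcal{L}_{\k}$-sentence. The only constant symbols of $\mathcal{L}_{\k}$ are $0$ and $\infty$, and the default-value conventions stated before Theorem~\ref{hclQE} (namely $c\cdot 0=0$, $c\cdot\infty=\infty$, $0+0=0$, $-0=0$, $-\infty=\infty$, $\psi(0)=\psi(\infty)=\infty$, together with $0+\infty=\infty+0=\infty$) imply by induction on term complexity that every closed term of $\mathcal{L}_{\k}$ evaluates in any $H$-triple to $0$ or to $\infty$. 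Therefore every atomic closed $\mathcal{L}_{\k}$-sentence either has a truth value fixed by these conventions (as for $0=0$, $0<\infty$, $\neg(0=\infty)$, $\psi(0)=\infty$, and $\neg P(\infty)$—the last because $P\subseteq\Gamma$, not $\Gamma_{\infty}$) or is the single sentence $P(0)$. Specifying whether $P(0)$ holds therefore fixes the entire quantifier-free theory, and by QE the entire theory.

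Combining the two paragraphs, $T_{\k}^{>}$ and $T_{\k}^{<}$ are two distinct completions (they disagree on $P(0)$), and by the dichotomy of the first paragraph no model of $T_{\k}$ lies outside these two, so they are the only completions. The only step requiring genuine attention is the term analysis in the middle paragraph---one must be certain that the conventions on $0$ and $\infty$ given before Theorem~\ref{hclQE} really do close off all closed terms inside $\{0,\infty\}$ and that $P(\infty)$ is fixed false---but this is a straightforward syntactic verification once the language $\mathcal{L}_{\k}$ is spelled out.
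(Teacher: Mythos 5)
Your proof is correct and is essentially the paper's argument. The paper phrases it via the standard prime-substructure shortcut---noting that the $H$-triples $\big(\{0\},\psi_0,\{0\}\big)$ and $\big(\{0\},\psi_0,\emptyset\big)$ embed as $\mathcal{L}_{\k}$-substructures into every model of $T_{\k}^{>}$ and $T_{\k}^{<}$ respectively, which with QE forces each of these theories to be complete---and your closed-term analysis is exactly the syntactic verification underlying that observation: $\{0,\infty\}$ is closed under all $\mathcal{L}_{\k}$-operations and the only atomic datum left undetermined is whether $P(0)$ holds.
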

\begin{proof} We have an $H$-triple $\big(\{0\},\psi_0, \{0\}\big)$ over 
$\k$ that embeds into every model of~$T_{\k}^{>}$, and an $H$-triple $\big(\{0\},\psi_0, \emptyset\big)$ over $\k$ that embeds into every model of $T_{\k}^{<}$. Here~$\psi_0$ is the 
``empty'' function $\emptyset \to \{0\}$. 
\end{proof}

\noindent
Suppose $K$ is a Liouville closed $H$-field. Then its $H$-couple $(\Gamma,\psi)$ is naturally an $H$-couple over its constant field $C$. The case $(\Gamma,\psi)\models T_{C}^{>}$ corresponds to the derivation $\der$ of $K$ being small (that is, $\der f \prec 1$ for all $f\prec 1$ in $K$),
while the case~$(\Gamma,\psi)\models T_{C}^{<}$ corresponds to this derivation not being small. For example, the usual derivation $\frac{d}{dx}$ of $\T$ is small. The derivation $x^2\frac{d}{dx}$ on $\T$ is not small, but $\T$ with this derivation is still Liouville closed.

\section{Simple Extensions}\label{sec:simple}

\noindent
Let $(\Gamma, \psi)$ be an $H$-couple over $\k$ with asymptotic integration, and let 
$(\Gamma^*, \psi^*)$ be an
$H$-couple over $\k$ that extends $(\Gamma, \psi)$. For $\gamma\in \Gamma^*$, let $\big(\Gamma\<\gamma\>,\psi_\gamma\big)$  denote the $H$-couple over $\k$
generated by $\Gamma\cup\{\gamma\}$ in $(\Gamma^*, \psi^*)$. Let $\beta\in \Gamma^*\setminus \Gamma$. The following result yields a
useful
description of the ``simple'' extension $\big(\Gamma\<\beta\>,\psi_\beta\big)$, where $i$,~$n$ range over $\N=\{0,1,2,\dots\}$:

\begin{prop}\label{cases} One of the following occurs:
\begin{enumerate}
\item[(a)] $(\Gamma+ \k\beta)^\dagger=\Gamma^\dagger$;
\item[(b)] there are sequences $(\alpha_i)$ in 
$\Gamma$ and $(\beta_i)$ in $\Gamma^*$ such that
$(\beta_i)$ is $\k$-linearly independent
over $\Gamma$, $\beta_0=\beta-\alpha_0$ and $\beta_{i+1}=\beta_i^\dagger-\alpha_{i+1}$ for all $i$, and such that
$\Gamma\<\beta\>=\Gamma \oplus \bigoplus_{i=0}^\infty \k\beta_i$.
\item[(c)$_n$] there are $\alpha_0,\dots,\alpha_n\in \Gamma$, and nonzero $\beta_0,\dots, \beta_n\in \Gamma^*$ such that $\beta_0=\beta-\alpha_0$, $\beta_{i+1}=\beta_i^\dagger-\alpha_{i+1}$ for $i<n$, the vectors $\beta_0,\dots, \beta_n, \beta_n^\dagger$ are $\k$-linearly independent over 
$\Gamma$, $(\Gamma+\k\beta_n^\dagger)^\dagger=\Gamma^\dagger$, and $\Gamma\<\beta\>=\Gamma \oplus \bigoplus_{i=0}^n \k\beta_i \oplus \k\beta_n^\dagger$.
\item[(d)$_n$] there are $\alpha_0,\dots,\alpha_n\in \Gamma$, and nonzero $\beta_0,\dots, \beta_n\in \Gamma^*$ such that $\beta_0=\beta-\alpha_0$, $\beta_{i+1}=\beta_i^\dagger-\alpha_{i+1}$ for $i<n$, the vectors $\beta_0,\dots, \beta_n$ are $\k$-linearly independent over 
$\Gamma$, $\beta_n^\dagger\in \Gamma\setminus \Gamma^\dagger$, and $\Gamma\<\beta\>=\Gamma \oplus \bigoplus_{i=0}^n \k\beta_i$.
\end{enumerate}
\end{prop}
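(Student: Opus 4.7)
My strategy is to attempt to build sequences $(\alpha_i)$ in $\Gamma$ and $(\beta_i)$ in $\Gamma^*\setminus\Gamma$ with $\beta_0=\beta-\alpha_0$ and $\beta_{i+1}=\beta_i^\dagger-\alpha_{i+1}$, arranged so that $\beta_i^\dagger\notin\Gamma^\dagger$ at every stage, and to let the four advertised cases correspond to how this construction halts or continues. For the base step: if $(\beta-\alpha)^\dagger\in\Gamma^\dagger$ for every $\alpha\in\Gamma$ then, since $(c\beta+\gamma)^\dagger=(\beta+c^{-1}\gamma)^\dagger$ for $c\in\k^\times$ and $\gamma\in\Gamma$, every nonzero element of $\Gamma+\k\beta$ has its $\dagger$-value in $\Gamma^\dagger$, i.e., case (a) holds. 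Otherwise I fix $\alpha_0\in\Gamma$ with $\beta_0:=\beta-\alpha_0$ satisfying $\beta_0^\dagger\notin\Gamma^\dagger$.

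For the recursive step, given $\beta_n\in\Gamma^*\setminus\Gamma$ with $\beta_n^\dagger\notin\Gamma^\dagger$, I branch on $\beta_n^\dagger$: if $\beta_n^\dagger\in\Gamma$, then necessarily $\beta_n^\dagger\in\Gamma\setminus\Gamma^\dagger$ and I halt with case (d)$_n$; if $\beta_n^\dagger\notin\Gamma$ but $(\Gamma+\k\beta_n^\dagger)^\dagger=\Gamma^\dagger$ (so no $\alpha_{n+1}$ can keep the next $\dagger$-value outside $\Gamma^\dagger$), I halt with case (c)$_n$; otherwise I pick $\alpha_{n+1}\in\Gamma$ with $(\beta_n^\dagger-\alpha_{n+1})^\dagger\notin\Gamma^\dagger$, set $\beta_{n+1}$ accordingly, and continue. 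If the construction never halts I obtain the infinite data of case (b).

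To verify the structural claims, by iterated application of Lemma~\ref{b1b2} I establish the strict chain $\beta_0^\dagger<\beta_1^\dagger<\cdots$, which via (HC) and the compatibility $[x]_\k=[y]_\k\Rightarrow\psi(x)=\psi(y)$ translates into $[\beta_0]_\k>[\beta_1]_\k>\cdots$; the same compatibility, together with $\beta_i^\dagger\notin\Gamma^\dagger$, shows $[\beta_i]_\k\notin[\Gamma^{\ne}]_\k$, and the $\beta_i$ are therefore $\k$-linearly independent over $\Gamma$. Invoking Lemma~\ref{b1b2} requires $|\beta_i|\ge|\alpha|$ for some $\alpha\in\Gamma^{\ne}$, which I deduce from asymptotic integration of $(\Gamma,\psi)$: a $\beta_i\in\Gamma^*$ with $0<|\beta_i|<\Gamma^{>}$ would force $\beta_i^\dagger>\Psi$ and so would already have produced termination via case (c) or (d). For the decomposition of $\Gamma\<\beta\>$, one inclusion is obvious; for the reverse I show that the asserted direct sum is closed under $\psi^*$. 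A general nonzero element $\gamma+\sum c_i\beta_i$ (possibly plus $c\beta_n^\dagger$ in case (c)$_n$) has its $\k$-archimedean class dominated either by some $\beta_j$---whose $\dagger$-value is $\beta_{j+1}+\alpha_{j+1}$, or the generator $\beta_n^\dagger$ in case (c)$_n$---or by a term in $[\Gamma]_\k$, giving a $\dagger$-value in $\Gamma^\dagger\subseteq\Gamma$.

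The main obstacle I anticipate is case (c)$_n$: showing that $\beta_n^\dagger$ is $\k$-linearly independent from $\Gamma\oplus\bigoplus_{i\le n}\k\beta_i$ and that its $\k$-archimedean class fits into the preceding chain appropriately. Lemma~\ref{b1b2} does not apply at this terminal step, as its hypothesis $\beta_{n+1}^\dagger\notin\Psi$ is precisely negated by the case-(c)$_n$ condition. I expect to handle this through a direct archimedean-class argument using (AC3)---specifically, the inequality $\gamma+\psi^*(\gamma)>\Psi$ forces $\psi^*(\gamma)$ to sit strictly below $\gamma$ in $\k$-archimedean class whenever the resulting $\dagger$-values land in $\Psi^\downarrow$---combined with the defining condition $(\Gamma+\k\beta_n^\dagger)^\dagger=\Gamma^\dagger$ to pin down $[\beta_n^\dagger]_\k$ relative to $[\beta_n]_\k$ and to $[\Gamma]_\k$.
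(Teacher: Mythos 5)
Your construction and case-analysis match the paper's: you build $(\alpha_i)$, $(\beta_i)$ by the same recursion, branch on whether $\beta_n^\dagger\in\Gamma$ and whether the process can continue, and drive the strict chain $\beta_0^\dagger<\beta_1^\dagger<\cdots$ via Lemma~\ref{b1b2}. Your way of guaranteeing the hypothesis $|\beta_i|\ge|\alpha|$ for Lemma~\ref{b1b2} (observing that $0<|\beta_i|<\Gamma^{>}$ would put $\beta_i^\dagger$ strictly between $\Psi$ and $(\Gamma^{>})'$ and then, by Corollary~\ref{addgapcor}, force $(\Gamma+\k\beta_i^\dagger)^\dagger=\Gamma^\dagger$, hence termination at stage $i$) is the same content as the paper's Claim~1, run directly rather than by contradiction. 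Both are sound, and your dominant-archimedean-class argument for the decomposition in cases (b) and (d)$_n$ is correct.

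The genuine gap is exactly where you anticipate it, case (c)$_n$, and the plan you sketch would not close it. Trying to ``pin down $[\beta_n^\dagger]_\k$ relative to $[\beta_n]_\k$ and to $[\Gamma]_\k$'' via (AC3) chases a quantity that has no uniform location: $[\beta_n^\dagger]_\k$ may or may not lie in $[\Gamma]_\k$, and without a Hahn-type hypothesis you cannot read this off from $\psi^*(\beta_n^\dagger)\in\Psi$. The paper avoids the question entirely. Set $\Delta:=\Gamma+\k\beta_n^\dagger$. The defining condition of (c)$_n$ gives $\Delta^\dagger=\Gamma^\dagger$, so $\beta_i^\dagger\notin\Delta^\dagger$ for all $i\le n$, hence $[\beta_i]_\k\notin[\Delta]_\k$; together with the strict chain from Claim~1 this makes $\beta_0,\dots,\beta_n$ $\k$-linearly independent over $\Delta$ (which already contains $\Gamma$ and $\beta_n^\dagger$), giving the required linear independence over $\Gamma$. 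Closure of $\Delta\oplus\k\beta_0\oplus\cdots\oplus\k\beta_n$ under $\psi^*$ then falls out of the same dominant-class computation you ran in the other cases: a nonzero element's dominant class is either in $[\Delta]_\k$ (so its $\dagger$-value lies in $\Delta^\dagger=\Gamma^\dagger\subseteq\Delta$), or is $[\beta_j]_\k$ for some $j\le n$ (so its $\dagger$-value is $\beta_j^\dagger$, which for $j<n$ equals $\beta_{j+1}+\alpha_{j+1}$ and for $j=n$ lies in $\Delta$). As you have written it, your closure argument for $\Gamma\oplus\bigoplus_i\k\beta_i\oplus\k\beta_n^\dagger$ cannot be finished without this $\Delta$-reformulation, because it needs to know where $[\beta_n^\dagger]_\k$ sits in the chain---which is exactly the point you left open.
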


\noindent
Note that in case (a) we have $\Gamma\<\beta\>=\Gamma\oplus \k\beta$,
a case described in more detail in Lemma~\ref{noextradaggers}.
The proof below gives extra information about the other cases.

\begin{proof}
Suppose we are not in case (a). Then we have $\alpha_0\in \Gamma$ and $\beta_0:=\beta-\alpha_0$ with $\beta_0^\dagger\notin \Gamma^\dagger$. 
This is the first step in inductively constructing elements $\alpha_i\in \Gamma$ and
$\beta_i\in \Gamma\<\beta\>\setminus \Gamma_0$, either for all $i$, or for all $i\le n$ for a certain $n$. 
Suppose we already have $\alpha_0,\dots, \alpha_n\in \Gamma$ and $\beta_0,\dots, \beta_n\in \Gamma\<\beta\>\setminus \Gamma$ with $\alpha_0$ and $\beta_0$ as above,  $\beta_{i+1}=\beta_i^\dagger-\alpha_{i+1}$ and $\beta_i^\dagger\notin \Gamma$ for $i < n$, and
$\beta_n^\dagger\notin \Gamma^\dagger$.
Thus $[\beta_i]_{\k}\notin [\Gamma]_{\k}$ for $i\le n$.   

\medskip\noindent
{\bf Claim~1}:\quad $\beta_0^\dagger < \cdots < \beta_n^\dagger$.

\medskip\noindent
{\bf Claim~2}:\quad there is no $\eta\in \Gamma + \k \beta_0+ \cdots + \k \beta_n$ with $ \Psi < \eta < (\Gamma^{>})'$.

\medskip\noindent
To prove Claim~1, assume towards a contradiction
that $\beta_i^\dagger\ge \beta_{i+1}^\dagger$, $i<n$. Then by
Lemma~\ref{b1b2} we have $0<|\beta_i|<\Gamma^{>}$, so 
$\Psi< \beta_i^\dagger < (\Gamma^{>})'$, and thus
$[\beta_{i+1}]_{\k}\in [\Gamma]_{\k}$ by Corollary~\ref{addgapcor}, a contradiction. 
It follows from Claim~1 that $[\beta_0]_{\k} > \cdots > [\beta_n]_{\k}$ and that $\beta_0,\dots, \beta_n$ are $\k$-linearly independent over $\Gamma$. As to Claim~2, suppose towards a contradiction that $\Psi < \gamma + \delta < (\Gamma^{>})'$ where $\gamma\in \Gamma$, $\delta\in \k\beta_0+\cdots + \k\beta_n$. 
Then~$\delta\ne 0$, and so $[\delta]_{\k}\notin [\Gamma]_{\k}$. With $D:= \Psi-\gamma$ and
$E:=(\Gamma^{>})'-\gamma$, we have~$D < \delta < E$. On the other hand, for every $\epsilon\in \Gamma^{>}$ there are $d\in D$ and $e\in E$ with $e-d<\epsilon$, so $\Gamma$ is dense in $\Gamma+\k\delta$ by \cite[2.4.17]{ADH}, contradicting $[\delta]_{\k}\notin [\Gamma]_{\k}$. This concludes the proof of Claim~2.   

\medskip\noindent
If $(\beta_n^\dagger-\alpha_{n+1})^\dagger\notin \Gamma^\dagger$ for some $\alpha_{n+1}\in \Gamma$ (so $\beta_n^\dagger\notin \Gamma$), then  we take such an $\alpha_{n+1}$ and set $\beta_{n+1}:=\beta_n^\dagger-\alpha_{n+1}$. If there is no such $\alpha_{n+1}$, then the construction breaks off, with
$\alpha_n$ and $\beta_n$ as the last vectors.

Suppose the construction goes on indefinitely. Then it yields infinite se\-quen\-ces~$(\alpha_i)$ and $(\beta_i)$
as in case (b), in particular, $\Gamma\<\beta\>=\Gamma \oplus \bigoplus_{i=0}^\infty \k\beta_i$, 
$$\Psi_{\beta}\ :=\ \psi^*\big(\Gamma\<\beta\>^{\ne}\big)\ =\ \Psi\cup\big\{\beta_i^\dagger:\,i\in \N\big\}, $$
and $\big(\Gamma\<\beta\>, \psi_\beta\big)$ has asymptotic integration by Claim~2.

\medskip\noindent
Next, assume that the construction stops with $\alpha_n$ and $\beta_n$ as the last vectors. Thus~$({\Gamma+\k\beta_n^\dagger})^\dagger=\Gamma^\dagger$. We have two cases:

\medskip\noindent
{\bf Case~1}: $\beta_n^\dagger\notin \Gamma$.  
Then
$\alpha_0,\dots, \alpha_n, \beta_0,\dots, \beta_n$ are as in case (c)$_n$. Here is why.  
Set~$\Delta:=\Gamma + \k\beta_n^\dagger$, so 
$\Delta^{\dagger}=\Gamma^\dagger$.
From $\beta_i^\dagger\notin \Delta^\dagger$ for all $i\le n$
and Claim~1 we obtain that $\beta_0,\dots, \beta_n$ are
$\k$-linearly independent over $\Delta$, with
$$(\Delta + \k\beta_0+\cdots + \k\beta_n)^\dagger\ \subseteq\
  \Delta + \k\beta_0+\cdots + \k\beta_n,$$ 
and $\beta\in \Delta + \k\beta_0$, which proves the assertion. 

\medskip\noindent
{\bf Case~2}: $\beta_n^\dagger\in\Gamma$. Then 
$\alpha_0,\dots, \alpha_n, \beta_0,\dots, \beta_n$ are as in case (d)$_n$. Here is why. From $\beta_i^\dagger\notin \Gamma^\dagger$ for all $i\le n$
and Claim~1 we obtain that $\beta_0,\dots, \beta_n$ are
$\k$-linearly independent over $\Gamma$, with
$$(\Gamma + \k\beta_0+\cdots + \k\beta_n)^\dagger\ \subseteq\
  \Gamma + \k\beta_0+\cdots + \k\beta_n,$$ 
and $\beta\in \Gamma + \k\beta_0$, which proves the assertion. 
\end{proof}

\noindent
In case (d)$_n$ we have $\beta_n^\dagger\in \Gamma\setminus \Gamma^\dagger$, and this cannot happen if $(\Gamma,\psi)$ is closed.
The proof of Proposition~\ref{cases} yields some further results that are needed later: 

\begin{lemma}\label{lemb} Let $(\alpha_i)$ and $(\beta_i)$ be as in ${\rm(b)}$. Then:
\begin{enumerate}
\item[(i)] $\beta_i^\dagger\notin \Gamma$ for all $i$, and thus $[\beta_i]_{\k}\notin [\Gamma]_{\k}$ for all $i$; 
\item[(ii)] $(\beta_i^\dagger)$ is strictly increasing, and thus $([\beta_i]_{\k})$ is strictly decreasing;
\item[(iii)] $\big[\Gamma\<\beta\>\big]_{\k}=[\Gamma]_{\k}\cup\big\{[\beta_i]_{\k}:\,i\in \N\big\}$, and thus $\Psi_{\beta} = \Psi\cup \big\{\beta_i^\dagger:\, i\in \N\big\}$;
\item[(iv)] there is no $\eta\in \Gamma\<\beta\>$ with $\Psi < \eta < (\Gamma^{>})'$;
\item[(v)] $\big(\Gamma\<\beta\>, \psi_\beta\big)$ has asymptotic integration;
\item[(vi)] $\Gamma^{<}$ is cofinal in $\Gamma\<\beta\>^{<}$.
\end{enumerate}
If
$(\Gamma, \psi)$ is closed and $\gamma\in \Gamma^*\setminus \Gamma$ realizes the same cut in $\Gamma$ as $\beta$, then we have an
isomorphism $\big(\Gamma\<\beta\>, \psi_{\beta}\big)\to \big(\Gamma\<\gamma\>, \psi_{\gamma}\big)$
of $H$-couples over $\k$ that is the identity on~$\Gamma$ and sends $\beta$ to $\gamma$.  If $(\Gamma, \psi)$ is of Hahn type, then so is $\big(\Gamma\<\beta\>, \psi_{\beta}\big)$. 
\end{lemma}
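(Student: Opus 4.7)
Plan: The six numbered items and the two concluding statements are all close readings of the construction in case~(b) of Proposition~\ref{cases} and its proof, together with the elementary properties of $\k$-archimedean classes (in particular the basic fact that $[\alpha]_{\k}=[\gamma]_{\k}$ implies $\psi(\alpha)=\psi(\gamma)$). I would dispatch them in order.

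For (i)–(iii), everything falls out directly. In case~(b) the inductive construction of $\beta_{i+1}=\beta_i^\dagger-\alpha_{i+1}$ never terminates, so at no step $i$ can we have $\beta_i^\dagger\in\Gamma$ (which would put us in case~(c)$_i$ or~(d)$_i$); this gives the first half of~(i), and the contrapositive of the basic fact yields $[\beta_i]_{\k}\notin[\Gamma]_{\k}$. For~(ii), I would simply re-run Claim~1 of the proof of Proposition~\ref{cases} at every index (rather than only for $i<n$), which gives $\beta_i^\dagger<\beta_{i+1}^\dagger$ via Lemma~\ref{b1b2}; the strict decrease of $[\beta_i]_{\k}$ then follows from (HC) combined with the basic fact. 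For~(iii), the classes $[\Gamma]_{\k}$ and the strictly decreasing $\{[\beta_i]_{\k}\}$ are pairwise disjoint, so any nonzero element of $\Gamma\<\beta\>=\Gamma\oplus\bigoplus_i\k\beta_i$ has a unique leading class among its finite list of summands, and the $\psi_\beta$-values read off via the basic fact, giving $\Psi_\beta=\Psi\cup\{\beta_i^\dagger:i\in\N\}$.

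For~(iv) I would apply Claim~2 of the proof of Proposition~\ref{cases} to each finite partial sum $\Gamma+\k\beta_0+\cdots+\k\beta_n$ and note that every element of $\Gamma\<\beta\>$ lies in one such. For~(v), $\Psi_\beta$ has no maximum (strict increase of $(\beta_i^\dagger)$ plus the fact that $\Psi$ itself has no maximum since $(\Gamma,\psi)$ has asymptotic integration), and any gap $\eta$ in $\Gamma\<\beta\>$ would satisfy $\Psi<\eta$ and, since $(\Gamma^{>})'\subseteq(\Gamma\<\beta\>^{>})'$, also $\eta<(\Gamma^{>})'$, contradicting~(iv); hence $(\Gamma\<\beta\>,\psi_\beta)$ has asymptotic integration. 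Item~(vi) is the most delicate step, and I expect it to be the main obstacle. I would argue by contradiction: if some $\eta\in\Gamma\<\beta\>^{<}$ has $\eta<\Gamma^{<}$, then by~(iii) and~(ii) we must have $[\beta_0]_{\k}>[\Gamma^{\ne}]_{\k}$, whence $\beta_0^\dagger<\Psi$ by~(HC); a careful case analysis of $[\beta_0^\dagger]_{\k}$, bootstrapped through the iteration $\beta_{i+1}=\beta_i^\dagger-\alpha_{i+1}$ together with the constraint $\beta_{i+1}^\dagger\notin\Gamma^\dagger$, should eventually place some $\beta_i^\dagger$ in the region $\Psi<\beta_i^\dagger<(\Gamma^{>})'$ and thereby contradict~(i) via Corollary~\ref{addgapcor}.

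For the isomorphism statement, under the closedness hypothesis I would start from the identity on $\Gamma$ together with the assignment $\beta\mapsto\gamma$ (built using \cite[2.4.16]{ADH}), and then at each step propagate it using Lemma~\ref{cutinheritance}, whose cofinality hypothesis is furnished by~(vi) restricted to the relevant subextension; this inductively produces matching sequences $(\gamma_i)$ on the target side realising the same cuts in $\Gamma$ as the $(\beta_i)$, and the finite-stage isomorphisms glue into the desired map. Preservation of Hahn type is then immediate from~(iii): the class decomposition and the injectivity of $i\mapsto\beta_i^\dagger$ reduce both defining properties of Hahn type for $(\Gamma\<\beta\>,\psi_\beta)$ to the corresponding properties of $(\Gamma,\psi)$ in a straightforward verification.
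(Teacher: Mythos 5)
Your overall plan (mine Claims~1 and~2 of the proof of Proposition~\ref{cases}, then reuse the machinery of Lemma~\ref{cinf} and Lemma~\ref{cutinheritance} for the isomorphism) is essentially the paper's route, but two steps have genuine problems.

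First, your argument for (i) has the logic backwards. You appeal to mutual exclusivity of the cases: ``$\beta_i^\dagger\in\Gamma$ would put us in case (c)$_i$ or (d)$_i$, so it can't happen since we're in case (b).'' But the hypothesis of the lemma is only that $(\alpha_i)$, $(\beta_i)$ \emph{satisfy the conditions listed in (b)}; it is not given that they arose from a run of the recursion in the proof of Proposition~\ref{cases}, nor that the cases exclude each other (that exclusivity is itself something that would need to be proved, and is unnecessary). The correct and much shorter argument is the one the paper gives: from $\beta_{i+1}=\beta_i^\dagger-\alpha_{i+1}$ one gets $\beta_i^\dagger=\beta_{i+1}+\alpha_{i+1}$, so $\beta_i^\dagger\in\Gamma$ would force $\beta_{i+1}\in\Gamma$, contradicting the hypothesis that $(\beta_j)$ is $\k$-linearly independent over $\Gamma$. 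This is the crucial step: once (i) holds, the sequences \emph{do} conform to the recursive construction, and only then may you invoke Claims~1 and~2 verbatim for the later items.

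Second, your treatment of (vi) is wrong in direction and vague in substance. ``$\Gamma^{<}$ cofinal in $\Gamma\<\beta\>^{<}$'' means no $\eta\in\Gamma\<\beta\>$ satisfies $\Gamma^{<}<\eta<0$; you instead assume $\eta<\Gamma^{<}$, which negates \emph{co-initiality}, not cofinality, and your subsequent ``careful case analysis'' is not a proof. In fact (vi) is the \emph{easiest} of the six: if $\Gamma^{<}<\eta<0$ with $\eta\in\Gamma\<\beta\>$, then $0<|\eta|<\Gamma^{>}$, so by (HC) (and the absence of a maximum of $\Psi$, from asymptotic integration of $(\Gamma,\psi)$) one has $\Psi<\eta^\dagger$, and by (AC3) one has $\eta^\dagger<(\Gamma^{>})'$; since $\eta^\dagger\in\Gamma\<\beta\>$, this contradicts (iv). The remainder of your proposal---re-running Claim~1 for (ii), the class decomposition for (iii), Claim~2 for (iv), trichotomy for (v), the propagation through Lemma~\ref{cutinheritance} for the isomorphism, and the reduction for Hahn type---is in line with the paper and sound, modulo the caveat that the cofinality hypothesis for Lemma~\ref{cutinheritance} is most naturally read off from (iv).
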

\begin{proof} As to (i), this follows from the $\k$-linear independence of $(\beta_i)$
over $\Gamma$ and from~$\beta_{i}^\dagger=\beta_{i+1}+\alpha_{i+1}$. Hence the sequences 
$(\alpha_i),$ and $(\beta_i)$ conform to the construction in the proof of Proposition~\ref{cases}, and so other parts of that proof
yield (ii)--(vi). 
The next statement follows as in the proof of
Lemma~\ref{cinf} using Lemma~\ref{cutinheritance} and (iv). 

Suppose that $(\Gamma,\psi)$ is of Hahn type.  We show that then $\Gamma\<\beta\>$ is a Hahn space; the additional argument required for showing that $\big(\Gamma\<\beta\>,\psi_{\beta}\big)$ is of Hahn type is similar and left to the reader. 
So let $\delta_1, \delta_2\in \Gamma\<\beta\>^{\ne}$ satisfy $[\delta_1]_{\k}=[\delta_2]_{\k}$;  
we have to find $c\in \k$ such that $[\delta_1-c\delta_2]_{\k}< [\delta_1]_{\k}$. 
Now $$\delta_1\ =\ \gamma_1+\sum_i c_{i1}\beta_i, \qquad \delta_2\ =\ \gamma_2+\sum_i c_{i2}\beta_i,\quad \gamma_1,\gamma_2\in \Gamma,$$
with all $c_{i1}, c_{i2}\in \k$, and $c_{i1}=c_{i2}=0$ for all but finitely many $i$. Consider first the case
$[\delta_1]_{\k}\in [\Gamma]_{\k}$. Then $[\gamma_1]_{\k}> [\beta_i]_{\k}$ for all $i$ with $c_{i1}\ne 0$, by~(i),~(ii),~(iii), and so~$\delta_1=\gamma_1+ \alpha_1$ with $[\alpha_1]_{\k}< [\gamma_1]_{\k}=[\delta_1]_{\k}$, and likewise $\delta_2=\gamma_2 + \alpha_2$
with~$[\alpha_2]_{\k}< [\gamma_2]_{\k}=[\delta_2]_{\k}$. Take $c\in \k$ such that
$[\gamma_1-c\gamma_2]_{\k}<[\gamma_1]_{\k}$. Then $\delta_1-c\delta_2= \gamma_1-c\gamma_2+ \alpha_1-c\alpha_2$, so
$[\delta_1-c\delta_2]_{\k}<[\gamma_1]_{\k}=[\delta_1]_{\k}$. Next, suppose~$[\delta_1]_{\k}\notin [\Gamma]_{\k}$. 
Then $c_{i1}\ne 0$ for some $i$; let $j$ be the least such $i$. Then~$[\gamma_1]_{\k}<[\beta_j]_{\k}$ and~$[\delta_1]_{\k}=[\beta_j]_{\k}$ by (ii).  Now~$j$ is also the least  $i$ with $c_{i2}\ne 0$, in view of~$[\delta_1]_{\k}=[\delta_2]_{\k}$.
Then~$[\delta_1-c\delta_2]_{\k}<[\delta_1]_{\k}$ for~$c\in \k$ with $c_{j1}=cc_{j2}$. 
\end{proof}

\begin{lemma}\label{lemcn1} Let $\alpha_0,\dots, \alpha_n, \beta_1,\dots, \beta_n$ be as in \textup{(c)$_n$}, and set $\Delta:=\Gamma+\k\beta_n^\dagger$, so~$\Delta^\dagger=\Gamma^\dagger$ and $\Gamma\<\beta\>=\Delta \oplus\k\beta_0\oplus \cdots \oplus \k\beta_n$. Then: 
\begin{enumerate}
\item[(i)] $\Gamma^{<}$ is cofinal in $\Delta^{<}$;
\item[(ii)] $\beta_0^\dagger, \dots, \beta_{n}^\dagger\notin \Gamma$, and thus 
$[\beta_0]_{\k},\dots,[\beta_n]_{\k}\notin [\Delta]_{\k}$;
\item[(iii)] $\beta_0^\dagger< \cdots < \beta_n^\dagger$, and thus 
$[\beta_0]_{\k}> \cdots > [\beta_n]_{\k}$;
\item[(iv)] $\Psi_{\beta} = \Psi\cup \{\beta_0^\dagger,\dots, \beta_n^\dagger\}$ and $\big[\Gamma\<\beta\>\big]_{\k}=[\Delta]_{\k}\cup\big\{[\beta_0]_{\k},\dots,[\beta_n]_{\k}\big\}$;  
\item[(v)] there is no $\gamma\in \Delta+ \k\beta_0+\cdots + \k\beta_{n-1}$ with $0 < \gamma < \Gamma^{>}$;
\item[(vi)] if $|\beta_n|\ge \alpha$ for some $\alpha\in \Gamma^{>}$, then $\Gamma^{<}$ is cofinal in $\Gamma\<\beta\>^{<}$ and so a gap in~$(\Delta, \psi_{\Delta})$, if any, remains a gap in~$\big(\Gamma\<\beta\>, \psi_{\beta}\big)$;
\item[(vii)] if $|\beta_n|< \Gamma^{>}$, then $\big(\Gamma\<\beta\>, \psi_{\beta}\big)$ is grounded with $\max \Psi_{\beta}=\beta_n^\dagger$;
\item[(viii)] if $(\Delta, \psi_{\Delta})$ has no gap, then there is no $\eta\in \Gamma\<\beta\>$ with $\Psi < \eta < (\Gamma^{>})'$, and so $\Gamma^{<}$ is cofinal in $\Gamma\<\beta\>^{<}$  and $\big(\Gamma\<\beta\>, \psi_{\beta}\big)$ has asymptotic integration. 
\end{enumerate}
 \end{lemma}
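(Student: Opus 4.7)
The plan is to read the eight claims off the inductive construction underlying case~(c)$_n$ of Proposition~\ref{cases}, combined with an application of Lemma~\ref{noextradaggers} to the subextension $\Delta=\Gamma\<\beta_n^\dagger\>$ (legitimate because $(\Gamma+\k\beta_n^\dagger)^\dagger=\Gamma^\dagger$ puts $\beta_n^\dagger$ in case~(a) over $\Gamma$). For~(i), I would verify cofinality case by case as in that proof: in its cases~1--3 one has $[\Delta]_\k=[\Gamma]_\k$ (or even density of $\Gamma$ in $\Delta$), and in cases~4--6 the element realizing the new archimedean class sits in a cut of $\Gamma$ whose positive part has no maximum and whose complement has no minimum, forcing it to be $\k$-archimedean-bounded by some element of $\Gamma^>$.

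For (ii)--(iv) I would quote the construction in the proof of Proposition~\ref{cases}: each $\beta_i^\dagger$ with $i<n$ lies outside $\Gamma$ (otherwise the induction would have terminated at step~$i$ in case~(d)), and $\beta_n^\dagger\notin\Gamma$ is part of the hypothesis of (c)$_n$; Claim~1 there gives $\beta_0^\dagger<\cdots<\beta_n^\dagger$ and hence $[\beta_0]_\k>\cdots>[\beta_n]_\k$. The implication $[\alpha]_\k=[\beta]_\k\Rightarrow\psi(\alpha)=\psi(\beta)$ in $H$-couples over $\k$ recorded in Section~\ref{prelim} forces each $[\beta_i]_\k$ outside $[\Delta]_\k$. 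Decomposing a nonzero $\gamma\in\Gamma\<\beta\>$ as $\gamma=\delta+\sum_i c_i\beta_i$ with $\delta\in\Delta$ and $c_i\in\k$, the archimedean-dominant summand controls $\psi_\beta(\gamma)$, delivering both the computation of $\Psi_\beta$ and of $[\Gamma\<\beta\>]_\k$ in (iv).

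For (v) I would mimic Claim~2 of that same proof: a hypothetical counterexample $\gamma=\delta+c_0\beta_0+\cdots+c_{n-1}\beta_{n-1}\in(0,\Gamma^>)$ would force $\gamma^\dagger$ strictly between $\Psi$ and $(\Gamma^>)'$; identifying the archimedean-dominant summand (necessarily some $\beta_j$ with $j<n$, as $\Delta^\dagger=\Gamma^\dagger$) and invoking Corollary~\ref{addgapcor} on the resulting gap-creating element contradicts $[\beta_{j+1}]_\k\notin[\Delta]_\k$ from~(ii). For (vi), the assumption $|\beta_n|\geq\alpha\in\Gamma^>$ combined with (iii) keeps $[\beta_n]_\k$ below $[\alpha]_\k\in[\Gamma]_\k$, so together with (i) the full extension inherits cofinality of $\Gamma^<$ from below; gap-persistence then follows by iterating the argument of the final paragraph of Lemma~\ref{extension5} as each $\beta_{n-1-i}$ is successively adjoined. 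For (vii), $|\beta_n|<\Gamma^>$ forces $\beta_n^\dagger>\Psi$, and by (iii) this makes $\beta_n^\dagger=\max\Psi_\beta$, so $(\Gamma\<\beta\>,\psi_\beta)$ is grounded. For (viii), if $(\Delta,\psi_\Delta)$ has no gap, then $\Delta^\dagger=\Gamma^\dagger$ with $\Psi_\Delta=\Psi$ having no maximum (since $(\Gamma,\psi)$ has asymptotic integration) yields asymptotic integration for $\Delta$ via the trichotomy; part~(v), supplemented by a separate archimedean argument for hypothetical $\eta\in\Gamma\<\beta\>$ with nonzero $\beta_n$-coefficient (which puts $[\eta]_\k$ outside $[\Delta]_\k$ and hence out of the gap region), rules out any $\eta$ with $\Psi<\eta<(\Gamma^>)'$, giving the desired cofinality and asymptotic integration.

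The main obstacle I anticipate is parts~(vi)--(viii): one must juggle the position of $\beta_n$ relative to $\Gamma$ (infinitesimal versus not), the trichotomy status of $(\Delta,\psi_\Delta)$, and the behavior of gaps when each $\beta_i$ is adjoined, while tracking how the ``new'' archimedean classes coming from the $\beta_i$ interact with $[\Delta]_\k$. The most delicate piece is probably~(viii), where the gap-free property of $\Delta$ must be transferred to the full extension $\Gamma\<\beta\>$ despite the insertion of several new archimedean classes from $\beta_0,\dots,\beta_n$.
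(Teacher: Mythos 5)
Your overall strategy matches the paper's: all of (ii)--(viii) are read off from the inductive construction and Claims~1 and~2 in the proof of Proposition~\ref{cases}, combined with the decomposition of $\Gamma\<\beta\>$. Two small comments, however.

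For (i) you propose to re-run the entire case analysis of Lemma~\ref{noextradaggers} on the subextension $\Delta=\Gamma\<\beta_n^\dagger\>$. That works, but it is unnecessarily heavy: the paper's argument is a one-liner. If $\delta\in\Delta$ with $\Gamma^{<}<\delta<0$, then $0<|\delta|<\Gamma^{>}$, so $\delta^\dagger>\Psi$ (using asymptotic integration of $(\Gamma,\psi)$, so that $\Psi$ has no maximum), which contradicts $\delta^\dagger\in\Delta^\dagger=\Gamma^\dagger=\Psi$. The hypothesis $\Delta^\dagger=\Gamma^\dagger$ is doing all the work; no case split on the shape of $\Delta$ is needed.

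In (vi) you write that $|\beta_n|\geq\alpha\in\Gamma^{>}$ ``keeps $[\beta_n]_\k$ \emph{below} $[\alpha]_\k$''---the inequality is the wrong way around (it gives $[\beta_n]_\k\geq[\alpha]_\k$). What you actually need is exactly this reversed inequality: it shows $[\beta_n]_\k$ is not smaller than every class in $[\Gamma^{\neq}]_\k$, which combined with (i), (iv), (v) (or, as the paper says, with the observation that $0<\gamma<\Gamma^{>}$ for $\gamma\in\Gamma\<\beta\>$ forces $0<|\beta_n|<\Gamma^{>}$) yields cofinality of $\Gamma^{<}$ in $\Gamma\<\beta\>^{<}$. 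For (v) you invoke Corollary~\ref{addgapcor} where the paper uses Claim~2 in the proof of Proposition~\ref{cases}; both work, since Corollary~\ref{addgapcor} rests on the same density lemma \cite[2.4.17]{ADH}, and the density of $\Gamma$ in $\Gamma+\k\gamma^\dagger=\Gamma+\k\beta_{i+1}$ immediately contradicts $[\beta_{i+1}]_\k\notin[\Gamma]_\k$. For (viii) the paper's route is to note $\Psi_\Delta=\Psi$, so ``no gap'' for $(\Delta,\psi_\Delta)$ gives it asymptotic integration, and then apply Claim~2 with $\Delta$ in the role of $\Gamma$; this is cleaner than the ``separate archimedean argument for nonzero $\beta_n$-coefficient'' you sketch, which as stated would need more care (the dominant class of such an $\eta$ need not be $[\beta_n]_\k$).

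Modulo those local points, the proposal is sound and essentially the paper's proof.
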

\begin{proof} As to (i), if $\delta\in \Delta$ and
$\Gamma^{<}< \delta < 0$, then $\Psi < \delta^\dagger$, contradicting $\Delta^\dagger=\Gamma^\dagger$.  
Item (ii) follows from the $\k$-linear independence of $\beta_0,\dots, \beta_n, \beta_n^\dagger$
over $\Gamma$ and from~$\beta_{i}^\dagger=\beta_{i+1}+\alpha_{i+1}$ for $ i < n$. Next we obtain (iii) from Claim~1 in the proof of Proposition~\ref{cases}, and then (iv) follows easily. 
As to (v), by (ii) and (iii) we have
$$[\Delta+ \k\beta_0+\cdots + \k\beta_{n-1}]_{\k}\ =\ [\Delta]_{\k}\cup\big\{[\beta_0]_{\k},\dots, [\beta_{n-1}]_{\k}\big\}.$$
Thus assuming towards a contradiction that (v) is false gives
$\gamma\in \Delta\cup\{\beta_0,\dots, \beta_{n-1}\}$
with $0 < |\gamma| < \Gamma^{>}$. Then $\Psi<\gamma^\dagger < (\Gamma^{>})'$, and so $\gamma\notin \Delta$. Hence $\gamma=\beta_i$ with $ i < n$, and so $\gamma^\dagger\in \Gamma+\k\beta_0+ \cdots + \k\beta_n$, contradicting Claim~2 in the proof of
Proposition~\ref{cases} with $\gamma^\dagger$ in the role of $\eta$. By similar arguments, if $0 < \gamma < \Gamma^{>}$
for some
$\gamma\in \Gamma\<\beta\>$, then $0 < |\beta_n| < \Gamma^{>}$.
This gives (vi). For (vii), assume
$|\beta_n|< \Gamma^{>}$. Then~(i),~(iv),~(v) give
$[\beta_n]_{\k}=\min\!\big[\Gamma\<\beta\>^{\ne}\big]_{\k}$, and thus  $\max \Psi_\beta=\beta_n^\dagger$.

As to (viii), note first that $\Psi=\Psi_{\Delta}$. Assume 
$(\Delta, \psi_{\Delta})$ has no gap. Then 
$(\Delta, \psi_{\Delta})$
has asymptotic integration. Hence by Claim~2 in the proof of Proposition~\ref{cases}, applied to $\Delta$ instead of $\Gamma$,
there is no $\eta\in \Gamma\<\beta\>$ with $\Psi < \eta < (\Gamma^{>})'$.    
\end{proof}

\begin{lemma}\label{lemdn} Let $\alpha_0,\dots, \alpha_n, \beta_0,\dots, \beta_n$ be as in (d)$_n$. Then: 
\begin{enumerate}
\item[(i)] $\beta_0^\dagger, \dots, \beta_{n-1}^\dagger\notin \Gamma$, $\beta_n^\dagger\notin \Psi$, and thus 
$[\beta_0]_{\k},\dots,[\beta_n]_{\k}\notin [\Gamma]_{\k}$;
\item[(ii)] $\beta_0^\dagger< \cdots < \beta_n^\dagger$, and thus 
$[\beta_0]_{\k}> \cdots > [\beta_n]_{\k}$;
\item[(iii)] $\Psi_{\beta} = \Psi\cup \{\beta_0^\dagger,\dots, \beta_n^\dagger\}$ and $\big[\Gamma\<\beta\>\big]_{\k}=[\Gamma]_{\k}\cup\big\{[\beta_0]_{\k},\dots,[\beta_n]_{\k}\big\}$; 
\item[(iv)] there is no $\eta\in \Gamma\<\beta\>$ with $\Psi < \eta < (\Gamma^{>})'$;
\item[(v)] $\Gamma^{<}$ is cofinal in $\Gamma\<\beta\>^{<}$, and
$\big(\Gamma\<\beta\>, \psi_{\beta}\big)$ has asymptotic integration.
\end{enumerate} 
\end{lemma}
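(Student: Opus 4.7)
My plan is to closely mirror Lemma~\ref{lemcn1}, adapting the arguments to case~(d)$_n$ where $\beta_n^\dagger\in\Gamma$. Items~(i)--(iv) will follow immediately from the construction in the proof of Proposition~\ref{cases}; the essential new point is (v), for which the key step is a clean structural observation about where $\beta_n^\dagger$ can sit.

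For (i), the assertions $\beta_i^\dagger\notin\Gamma$ ($i<n$) and $\beta_n^\dagger\notin\Psi$ are exactly the inductive hypotheses of the construction together with the defining property of case~(d)$_n$. The statement $[\beta_i]_\k\notin[\Gamma]_\k$ then follows from the preliminary implication $[\alpha]_\k=[\gamma]_\k\Rightarrow\psi(\alpha)=\psi(\gamma)$, since a witness $\gamma\in\Gamma^{\ne}$ would force $\beta_i^\dagger=\gamma^\dagger\in\Psi$. For (ii), the strict chain $\beta_0^\dagger<\cdots<\beta_n^\dagger$ is Claim~1 in the proof of Proposition~\ref{cases}, and the reverse chain on archimedean classes follows from (HC) combined with the distinctness of the $\psi$-values. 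For (iii), I combine the direct sum decomposition with the identity $[\alpha+\gamma]_\k=\max([\alpha]_\k,[\gamma]_\k)$ for distinct classes---valid here by (i) and (ii)---to determine $\big[\Gamma\<\beta\>^{\ne}\big]_\k$, and the $\Psi_\beta$-formula then follows from the invariance of $\psi$ on archimedean classes. Finally (iv) is Claim~2 in the same proof, applied verbatim, since $\Gamma+\k\beta_0+\cdots+\k\beta_n=\Gamma\<\beta\>$.

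For (v), the crux is to show $\beta_n^\dagger\not>\Psi$. Suppose, towards a contradiction, that $\beta_n^\dagger>\Psi$. Applying (AC3) inside the $H$-couple $\Gamma\<\beta\>$ with an arbitrary $\gamma\in\Gamma^{>}$ gives $\gamma+\gamma^\dagger>\psi_\beta(\beta_n)=\beta_n^\dagger$, so $\beta_n^\dagger<(\Gamma^{>})'$. Thus $\beta_n^\dagger$ lies strictly between $\Psi$ and $(\Gamma^{>})'$; but $\beta_n^\dagger\in\Gamma$ then contradicts the absence of a gap in $(\Gamma,\psi)$, which we have since $(\Gamma,\psi)$ has asymptotic integration. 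Once $\beta_n^\dagger\not>\Psi$ is in hand, cofinality follows: an $\eta\in\Gamma\<\beta\>^{<}$ with $\Gamma^{<}<\eta<0$ would satisfy $\eta^\dagger>\Psi$ by (HC), while (iii) places $\eta^\dagger\in\Psi_\beta$, forcing $\eta^\dagger=\beta_i^\dagger$ for some $i$ and hence $\beta_n^\dagger\ge\beta_i^\dagger>\Psi$, a contradiction. Asymptotic integration then follows from the trichotomy: (iv) excludes a gap, and $\beta_n^\dagger\not>\Psi$ together with $\Psi$ having no maximum ensures $\Psi_\beta$ has no maximum either, ruling out the grounded alternative.

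The main obstacle to anticipate is simply spotting the right use of (AC3) inside the extended $H$-couple; once found, the no-gap property of $(\Gamma,\psi)$ does all the work, and this is precisely what upgrades the conditional statements in Lemma~\ref{lemcn1}(vi)--(vii) (where $|\beta_n|$ must be controlled by hand) to the unconditional conclusion here.
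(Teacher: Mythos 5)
Your proposal is correct and follows essentially the same route as the paper: (i)--(iv) come from the recursion, the $\k$-linear independence, and Claims~1 and 2 in the proof of Proposition~\ref{cases}, and (v) is then deduced from (iv). Your derivation of (v) is slightly more roundabout than necessary (one can contradict (iv) directly, since $\Gamma^{<}<\eta<0$ already forces $\Psi<\eta^\dagger<(\Gamma^{>})'$, using (AC3) and the fact that $\Psi$ has no maximum — note (HC) alone only gives $\eta^\dagger\ge\Psi$, so your ``$\eta^\dagger>\Psi$ by (HC)'' needs that extra word), but the substance is sound and matches the paper's intent.
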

\begin{proof} The first part of (i) follows from the
recursion satisfied by $\beta_0,\dots, \beta_n$, the
$\k$-linear independence of $\beta_0,\dots, \beta_n$ over $\Gamma$, and $\beta_n^\dagger\notin \Psi$. Claim~1 in the proof of Proposition~\ref{cases} gives (ii), which together with (i) yields (iii).
Claim~2 in that proof gives (iv), which has (v) as an easy consequence. 
\end{proof} 

\noindent
The next result is crucial in the proof  of Theorem~\ref{thm} in Section\ref{chha}. Here $(\Gamma^*, \psi^*)$ is equipped with an $H$-cut
$P^*$, and we set $P:= P^*\cap \Gamma = \Psi^{\downarrow}$, and $P_{\gamma}:=P^*\cap \Gamma\<\gamma\>$ for $\gamma\in \Gamma^*$, so we have the $H$-triples $(\Gamma, \psi, P), \big(\Gamma\<\gamma\>, \psi_{\gamma}, P_{\gamma}\big) \subseteq (\Gamma^*, \psi^*, P^*)$ over $\k$.

\begin{lemma}\label{keyinterval} Assume 
$(\Gamma^*,\psi^*)$ is closed, of Hahn type, and
$\Gamma^{<}$ is not cofinal in~$(\Gamma^*)^{<}$. Then for some
$\delta\in (\Gamma^*)^{>}$, all $\gamma\in \Gamma^*$
with $|\beta-\gamma| < \delta$ yield an isomorphism
$\big(\Gamma\<\beta\>, \psi_{\beta}, P_{\beta}\big) \to
\big(\Gamma\<\gamma\>, \psi_{\gamma}, P_{\gamma}\big)$ over
$\Gamma$ sending $\beta$ to $\gamma$. 
\end{lemma}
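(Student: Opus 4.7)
The plan is to apply Proposition \ref{cases} to $\beta$ and construct the isomorphism case-by-case, using the cofinality hypothesis to supply a sufficiently small $\delta$.

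First, from the hypothesis I fix $\epsilon\in(\Gamma^*)^{>}$ with $\epsilon<\Gamma^{>}$. Since $\Gamma^{>}$ is closed under multiplication by $\k^{>}$, this gives $c\epsilon<\Gamma^{>}$ for every $c\in\k^{>}$, hence $[\epsilon]_{\k}<[\alpha]_{\k}$ for all $\alpha\in\Gamma^{\ne}$.

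The main work is case (b) of Proposition \ref{cases}. Let $(\alpha_i)$ in $\Gamma$ and $(\beta_i)$ in $\Gamma^*$ be the sequences supplied there. By Lemma \ref{lemb}(i,\,vi), $[\beta_0]_{\k}\notin[\Gamma]_{\k}$ and $\Gamma^{<}$ is cofinal in $\Gamma\<\beta\>^{<}$. Applying cofinality to $|\beta_0|\in\Gamma\<\beta\>^{>}$ yields $\alpha'\in\Gamma^{>}$ with $\alpha'\leq|\beta_0|$, and since $[\beta_0]_{\k}\notin[\Gamma]_{\k}$ this gives $[\alpha']_{\k}<[\beta_0]_{\k}$, so $[\epsilon]_{\k}<[\beta_0]_{\k}$. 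I set $\delta:=\epsilon$. For any $\gamma\in\Gamma^*$ with $|\beta-\gamma|<\delta$, put $\gamma_0:=\gamma-\alpha_0=\beta_0+(\gamma-\beta)$. From $[\gamma-\beta]_{\k}\leq[\delta]_{\k}<[\beta_0]_{\k}$ the ordered-vector-space estimates give $[\gamma_0]_{\k}=[\beta_0]_{\k}$, and Hahn type forces $\gamma_0^\dagger=\beta_0^\dagger$. Iterating $\gamma_{i+1}:=\gamma_i^\dagger-\alpha_{i+1}$ shows by induction that $\gamma_i=\beta_i$ for all $i\geq 1$. Thus $\gamma$ also realizes case (b) with the same $\alpha_i$'s, and $\Gamma\<\gamma\>=\Gamma\oplus\k\gamma_0\oplus\bigoplus_{i\geq 1}\k\beta_i$. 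Define $\Phi\colon\Gamma\<\beta\>\to\Gamma\<\gamma\>$ to be the identity on $\Gamma$ and on each $\beta_i$ with $i\geq 1$, and to send $\beta_0\mapsto\gamma_0$, extended $\k$-linearly. For $x=\alpha+\sum c_i\beta_i$, the displacement $\Phi(x)-x=c_0(\gamma-\beta)$ has $\k$-archimedean class strictly below $[x]_{\k}$ whenever $c_0\ne 0$, so $\Phi$ preserves order and (by Hahn type) $\psi$. Both $(\Gamma\<\beta\>,\psi_{\beta})$ and $(\Gamma\<\gamma\>,\psi_{\gamma})$ have asymptotic integration (Lemma \ref{lemb}(v)), so each has a unique $H$-cut, and $\Phi$ automatically matches $P_{\beta}$ with $P_{\gamma}$.

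Cases (c)$_n$ and (d)$_n$ run in parallel, with Lemmas \ref{lemcn1} and \ref{lemdn} replacing Lemma \ref{lemb}; the finite truncation only simplifies the construction. In case (a), where $\Gamma\<\beta\>=\Gamma+\k\beta$ and $(\Gamma+\k\beta)^\dagger=\Gamma^\dagger$, Hahn type gives $[\Gamma+\k\beta]_{\k}=[\Gamma]_{\k}$, hence $[\beta-\alpha]_{\k}\in[\Gamma^{\ne}]_{\k}>[\epsilon]_{\k}$ for every $\alpha\in\Gamma$; with $\delta:=\epsilon$ the map fixing $\Gamma$ and sending $\beta\mapsto\gamma$ is again an isomorphism of $H$-couples. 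The main obstacle is $P^*$-preservation in case (a), because $(\Gamma\<\beta\>,\psi_{\beta})$ may have a gap (subcases 1--2 of Lemma \ref{noextradaggers}) so that two $H$-cuts exist. This is handled by observing that $|\gamma-\beta|<\delta=\epsilon$ is too small---in archimedean class---to move the gap element $\beta-\alpha_0$ across the boundary of $P^*$ in $\Gamma^*$, so $\gamma-\alpha_0$ lies on the same side of $P^*$ as $\beta-\alpha_0$ and the map preserves the chosen $H$-cut.
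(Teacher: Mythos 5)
Your overall architecture (case analysis via Proposition~\ref{cases}, an isomorphism fixing $\Gamma$ and shifting $\beta_0$, asymptotic-integration arguments for the $H$-cut) is the same as the paper's, and your treatment of cases (b), (c)$_n$, (d)$_n$ is essentially correct. But the argument for the $P^*$-preservation in Case~(a), Subcases~1 and~2, has a genuine gap: a fixed $\delta=\epsilon$ with $0<\epsilon<\Gamma^{>}$ is \emph{not} small enough. Your ``archimedean class'' heuristic is the wrong invariant here, because the boundary of $P^*$ in $\Gamma^*$ is \emph{not} separated from the gap element $\eta$ by any archimedean-class barrier: in a closed $H$-couple one has $P^*=\Psi^*$ (since $\Psi^*$ is downward closed), so $\eta\in P^*$ means $\eta=\mu_0^\dagger$ for some $\mu_0$ with $0<|\mu_0|<\Gamma^{>}$, and the distance from $\eta$ to $\Gamma^*\setminus P^*=((\Gamma^*)^{>})'$ is exactly comparable to $|\mu_0|$, which may be \emph{smaller} than $\epsilon$.

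Concretely, take $\beta:=\epsilon^\dagger$. Since $(\Gamma,\psi)$ has asymptotic integration and $\Psi<\epsilon^\dagger<(\Gamma^{>})'$, we have $\beta\notin\Gamma$, and we are in Case~(a), Subcase~1 with $\eta=\beta\in\Psi^*=P^*$. Now put $\gamma:=\beta+\tfrac12\epsilon$. Then $|\gamma-\beta|=\tfrac12\epsilon<\delta$, yet
\[
\gamma\ =\ \epsilon^\dagger+\tfrac12\epsilon\ =\ \bigl(\tfrac12\epsilon\bigr)^\dagger+\tfrac12\epsilon\ =\ \bigl(\tfrac12\epsilon\bigr)'\ \in\ \bigl((\Gamma^*)^{>}\bigr)'\ =\ \Gamma^*\setminus P^*,
\]
using $[\tfrac12\epsilon]_\k=[\epsilon]_\k$, hence $(\tfrac12\epsilon)^\dagger=\epsilon^\dagger$. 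So the map fixing $\Gamma$ and sending $\beta\mapsto\gamma$ carries $\beta\in P_\beta$ to $\gamma\notin P^*$: it is not an isomorphism of $H$-\emph{triples}. The paper avoids this by first fixing $\eta$, then invoking that $P^*$ has no largest element to pick $\varepsilon$ small enough that the whole interval $(\eta-\varepsilon,\eta+\varepsilon)$ stays in $P^*$, and only then setting $\delta:=|c|\varepsilon$ where $\beta=\alpha+c\eta$; this $\delta$ genuinely depends on how near $\eta$ is to the boundary of $P^*$, not merely on the coinitiality datum $\epsilon$. You need the same two-step choice of $\delta$ in Subcases~1 and~2 of Case~(a); in the remaining cases your uniform $\delta=\epsilon$ is fine.
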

\begin{proof} Suppose we are in Case~(a) of Proposition~\ref{cases}. There are three subcases:

\medskip\noindent
{\em Subcase 1}: $(\Gamma^>)^\dagger < \eta < (\Gamma^>)'$
and $\eta\in P^*$ for some $\eta\in \Gamma+\k\beta$. 
Fix such $\eta$ and recall from Case~1 in the proof of Lemma~\ref{noextradaggers} that $\Gamma$ is dense in $\Gamma+\k\eta=\Gamma+\k\beta$. Thus if $\varepsilon\in \Gamma^*$ and
$0 < \varepsilon < \Gamma^{>}$, then $(\Gamma^>)^\dagger < \eta-\varepsilon < \eta$. Moreover, $P^*$ has no largest element,
so we can take $\varepsilon\in (\Gamma^*)^{>}$ so small that for all 
$\zeta\in \Gamma^*$ with~$|\eta-\zeta|<\varepsilon$ we have
$(\Gamma^>)^\dagger < \zeta < (\Gamma^>)'$
and $\zeta\in P^*$; in particular, such $\zeta$
realizes the same cut in $\Gamma$ as $\eta$. 
Take $\alpha\in \Gamma$ and $c\in \k^\times$ with $\beta=\alpha+c\eta$.  Then for
$\zeta$ as above
and $\gamma:= \alpha+c\zeta$ the condition
$|\eta-\zeta|< \varepsilon$ amounts to 
$|\beta-\gamma|< \delta:=|c|\varepsilon$, with an isomorphism
$\big(\Gamma\<\beta\>, \psi_{\beta}, P_{\beta}\big) \to
\big(\Gamma\<\gamma\>, \psi_{\gamma}, P_{\gamma}\big)$ over
$\Gamma$ sending $\beta$ to $\gamma$.

\medskip\noindent
{\em Subcase 2}: $(\Gamma^>)^\dagger < \eta < (\Gamma^>)'$
and $\eta\notin P^*$ for some $\eta\in \Gamma+\k\beta$.
This can be treated in the same way as Subcase 1. 

\medskip\noindent
{\em Subcase 3}: there is no $\eta\in \Gamma+\k\beta$ with $(\Gamma^>)^\dagger < \eta < (\Gamma^>)'$. Take $\delta\in \Gamma^*$ such that~$0 < \delta < \Gamma^{>}$. Then all $\gamma\in \Gamma^*$ with
$|\gamma-\beta|< \delta$ realize the same cut in $\Gamma$ as $\beta$: otherwise we would have $\alpha\in \Gamma$ with
$0<|\alpha-\beta|< \Gamma^{>}$, so $(\Gamma^>)^\dagger < (\alpha-\beta)^\dagger < (\Gamma^>)'$, a contradiction. Now $(\Gamma^*, \psi^*)$ is of Hahn type, so $[\Gamma+\k\beta]_{\k}=[\Gamma]_{\k}$. As in Case~3 in the proof of Lemma~\ref{noextradaggers} this yields for any such $\gamma$ an isomorphism 
$\big(\Gamma\<\beta\>, \psi_{\beta}, P_{\beta}\big) \to 
\big(\Gamma\<\gamma\>, \psi_{\gamma}, P_{\gamma}\big)$ over
$\Gamma$ sending $\beta$ to $\gamma$. 

\medskip\noindent
Assume we are in Case~(b) of Proposition~\ref{cases}, and let $(\alpha_i)$ and $(\beta_i)$ be as in that
case. Let $\varepsilon\in \Gamma^*$ be such that $[\varepsilon]_{\k}< [\beta_0]_{\k}$. Then $\beta_0+\varepsilon= (\beta+\varepsilon)-\alpha_0$, $[\beta_0+\varepsilon]_{\k}=[\beta_0]_{\k}$, and thus
$(\beta_0+\varepsilon)^\dagger=\beta_0^\dagger$. It follows that
with $\beta+\varepsilon$ instead of $\beta$ we are
also in case~(b), with associated sequences $(\alpha_i)$ and
$(\beta_{i,\varepsilon})$, with $\beta_{0,\varepsilon}:=\beta_0+\varepsilon$ and~$\beta_{i,\varepsilon}:= \beta_i$ for~$i\ge 1$. As noted in the proof of Lemma~\ref{lemb}, the sequences~$(\alpha_i)$,~$(\beta_i)$ conform to the construction in the proof of Proposition~\ref{cases}, and so the latter proof yields an isomorphism $\big(\Gamma\<\beta\>, \psi_{\beta}, P_{\beta}\big) \to 
\big(\Gamma\<\beta+\varepsilon\>, \psi_{\beta+\varepsilon}, P_{\beta+\varepsilon}\big)$ over
$\Gamma$ that sends~$\beta_i$ to~$\beta_{i,\varepsilon}$
for each $i$, and thus $\beta$ to $\beta+\varepsilon$.

\medskip\noindent
Next, assume we are in Case~(c)$_n$ of Proposition~\ref{cases}, and let $\alpha_0,\dots, \alpha_n,\beta_0,\dots, \beta_n$ be as in that case. As before, let $\varepsilon\in \Gamma^*$ be such that $[\varepsilon]_{\k}< [\beta_0]_{\k}$. Then $\beta_0+\varepsilon= (\beta+\varepsilon)-\alpha_0$, $[\beta_0+\varepsilon]_{\k}=[\beta_0]_{\k}$, so
$(\beta_0+\varepsilon)^\dagger=\beta_0^\dagger$. Hence
with $\beta+\varepsilon$ instead of $\beta$ we are
again in case (c)$_n$, with associated sequences $\alpha_0,\dots, \alpha_n$ and
$\beta_{0,\varepsilon}, \dots, \beta_{n,\varepsilon}$, with $\beta_{0,\varepsilon}:=\beta_0+\varepsilon$ and 
$\beta_{i,\varepsilon}:= \beta_i$ for $1\le i\le n$. Note also that
$\beta$ and $\beta+\varepsilon$ give rise to the same
$\Delta=\Gamma + \k\beta_n^\dagger=\Gamma + \k \beta_{n,\varepsilon}^\dagger$. 
It now follows from Lemma~\ref{lemcn1} that we have an isomorphism $\big(\Gamma\<\beta\>, \psi_{\beta}\big) \to
\big(\Gamma\<\beta+\varepsilon\>, \psi_{\beta+\varepsilon}\big)$
of $H$-couples over $\k$ that is the identity on
$\Delta$ and sends $\beta_i$ to $\beta_{i,\varepsilon}$
for each $i\le n$, and thus $\beta$ to $\beta+\varepsilon$. 
Since~$\beta$ and $\beta+\varepsilon$ yield the same $\Delta$, it follows easily from~(vi),~(vii),~(viii) of Lemma~\ref{lemcn1}
that this isomorphism maps $P_{\beta}$ onto $P_{\beta+\varepsilon}$.

\medskip\noindent
Finally, assume we are in Case~(d)$_n$ of Proposition~\ref{cases}, and let $\alpha_0,\dots, \alpha_n,\beta_0,\dots, \beta_n$ be as in that case. Let $\varepsilon\in \Gamma^*$ be such that $[\varepsilon]_{\k}< [\beta_0]_{\k}$. Then $\beta_0+\varepsilon= {(\beta+\varepsilon)-\alpha_0}$, $[\beta_0+\varepsilon]_{\k}=[\beta_0]_{\k}$, so
$(\beta_0+\varepsilon)^\dagger=\beta_0^\dagger$. Hence
with $\beta+\varepsilon$ instead of $\beta$ we are
again in case~(d)$_n$, with associated sequences $\alpha_0,\dots, \alpha_n$ and
$\beta_{0,\varepsilon}, \dots, \beta_{n,\varepsilon}$, with~$\beta_{0,\varepsilon}:=\beta_0+\varepsilon$ and $\beta_{i,\varepsilon}:= \beta_i$ for $1\le i\le n$. Then Lemma~\ref{lemdn} yields an isomorphism~$\big(\Gamma\<\beta\>, \psi_{\beta}, P_\beta\big) \to
\big(\Gamma\<\beta+\varepsilon\>, \psi_{\beta+\varepsilon}, P_{\beta+\varepsilon}\big)$
of $H$-triples over $\k$ that is the identity on
$\Gamma$ and sends $\beta_i$ to~$\beta_{i,\varepsilon}$
for each $i\le n$, and thus $\beta$ to $\beta+\varepsilon$.  
\end{proof}

\section{Closed $H$-couples of Hahn Type}\label{chha} 

\noindent
So far we have treated $H$-couples over $\k$ as one-sorted
structures, by keeping $\k$ fixed and having for each scalar $c$ a separate unary function symbol that is interpreted as scalar multiplication by 
$c$. We now go to the setting where an $H$-couple over $\k$ is viewed as a $2$-sorted structure with $\k$ as a second sort, and thus with ``Hahn type'' as a first-order condition. Extending an $H$-couple may now involve extending $\k$, so we begin with a subsection on the process of scalar extension for Hahn spaces. We remind the reader that the ordered scalar field $\k$ is not
necessarily real closed.

\subsection*{Scalar extension}
Let $\Gamma$ be a Hahn space over $\k$, and let $\k^*$ be an ordered field extension of $\k$. Then we
have the vector space $\Gamma_{\k^*}:= \k^*\otimes_{\k} \Gamma$ over $\k^*$.
We have the $\k$-linear embedding 
$\gamma\mapsto 1\otimes \gamma\colon \Gamma\to \Gamma_{\k^*}$ via which we identify $\Gamma$ with a $\k$-linear subspace of 
$\Gamma_{\k^*}$.
We make $\Gamma_{\k^*}$ into a Hahn space over
$\k^*$ as follows: for any 
$\gamma\in\Gamma_{\k^*}^{\ne}$
we have $\gamma=c_1\gamma_1+\cdots + c_m\gamma_m$
with $m\ge 1$, $c_1,\dots, c_m\in (\k^*)^\times$, $\gamma_1\dots, \gamma_m\in \Gamma^{>}$, $[\gamma_1]_{\k} > \cdots >[\gamma_m]_{\k}$;
then $\gamma>0$ iff $c_1>0$. This makes $\Gamma$ into an
ordered $\k$-linear subspace of $\Gamma_{\k^*}$, and we have an
order-preserving bijection~${[\gamma]_{\k}\to [\gamma]_{\k^*}\colon [\Gamma]_{\k} \to [\Gamma_{\k^*}]_{\k^*}}$. 

\begin{lemma}\label{ndha} Assume 
$[\Gamma^{\ne}]_{\k}$ has no least element. Then for every
$\gamma^{*}\in \Gamma_{\k^{*}}\setminus \Gamma$ there is an element 
$\varepsilon\in \Gamma^{>}$ such that $|\gamma^{*}-\gamma|>\varepsilon$ for all $\gamma\in \Gamma$.
\end{lemma}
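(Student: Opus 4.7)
The plan is to exploit the Hahn form of $\gamma^*$ in $\Gamma_{\k^*}$ and build a $\k^*$-linear functional that witnesses a lower bound on $[\gamma^*-\gamma]_{\k^*}$ uniformly in $\gamma\in\Gamma$. Write $\gamma^* = c_1\mu_1 + \cdots + c_m\mu_m$ as in the construction of $\Gamma_{\k^*}$, with $\mu_j \in \Gamma^{>}$, $c_j \in (\k^*)^\times$, and $[\mu_1]_\k > \cdots > [\mu_m]_\k$. Since $\gamma^* \notin \Gamma$, some $c_j \notin \k$; let $j_0$ be the \emph{smallest} such index, set $J := [\mu_{j_0}]_\k$ and $K := [\mu_1]_\k$, and use the hypothesis on $[\Gamma^{\ne}]_\k$ to pick $\varepsilon \in \Gamma^{>}$ with $[\varepsilon]_\k < J$.

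I claim $[\gamma^*-\gamma]_{\k^*} \geq [\mu_{j_0}]_{\k^*}$ for every $\gamma \in \Gamma$. Granting the claim, the order-preserving identification $[\Gamma]_\k \to [\Gamma_{\k^*}]_{\k^*}$ yields $[\varepsilon]_{\k^*} < [\mu_{j_0}]_{\k^*}$, and the required $|\gamma^* - \gamma| > \varepsilon$ follows by a routine Hahn-space comparison. The case $[\gamma]_\k > K$ is immediate; so assume $\gamma \in \Gamma_{\leq K}$, where $\Gamma_{\leq K}$, $\Gamma_{<J}$, etc., denote the evident $\k$-subspaces of $\Gamma$. I would use the Hahn property of $\Gamma$ to split $\Gamma_{\leq J} = \Gamma_{<J} \oplus \k\mu_{j_0}$, extend this to a $\k$-linear direct-sum decomposition $\Gamma_{\leq K} = \Gamma_{<J} \oplus \k\mu_{j_0} \oplus \Gamma''$ with $\Gamma''$ any $\k$-linear complement of $\Gamma_{\leq J}$ in $\Gamma_{\leq K}$, and then tensor with $\k^*$ to obtain
\[
(\Gamma_{\k^*})_{\leq K}\ =\ (\Gamma_{\k^*})_{<J}\ \oplus\ \k^*\mu_{j_0}\ \oplus\ (\k^*\otimes_\k\Gamma'').
\]
Projection onto the middle summand defines a $\k^*$-linear functional $\phi\colon(\Gamma_{\k^*})_{\leq K}\to\k^*$ that vanishes on $(\Gamma_{\k^*})_{<J}$, sends $\mu_{j_0}$ to $1$, and, crucially, takes values in $\k$ on every element of $\Gamma_{\leq K}$.

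A short computation from the Hahn form of $\gamma^*$ and the minimality of $j_0$ (which ensures $c_1,\dots,c_{j_0-1}\in\k$) gives $\phi(\gamma^*) = c_{j_0}+\lambda$ for some $\lambda\in\k$, with $\lambda$ arising from the $\mu_{j_0}$-components of $\mu_1,\dots,\mu_{j_0-1}$ in the $\k$-splitting. Since $c_{j_0}\notin\k$ we get $\phi(\gamma^*)\notin\k$, while $\phi(\gamma)\in\k$ by construction; hence $\phi(\gamma^*-\gamma)\neq 0$. As $\phi$ vanishes on $(\Gamma_{\k^*})_{<J}$, this forces $\gamma^*-\gamma\notin(\Gamma_{\k^*})_{<J}$, i.e., $[\gamma^*-\gamma]_{\k^*}\geq [\mu_{j_0}]_{\k^*}$, completing the argument.

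The main substantive point is the identification $(\Gamma_{\k^*})_{<J}=\k^*\otimes_\k\Gamma_{<J}$ used to pass from the $\k$-linear decomposition of $\Gamma_{\leq K}$ to its $\k^*$-linear counterpart; this follows from the construction of Hahn forms in $\Gamma_{\k^*}$ and the order-preserving bijection between class sets. Beyond this and some bookkeeping around the filtration splitting (supplied by the Hahn property of $\Gamma$ plus elementary linear algebra), the argument is routine.
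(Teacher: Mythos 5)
Your argument is correct, but takes a genuinely different (and heavier) route than the paper's. The paper first normalizes by subtracting the element $c_1\gamma_1+\cdots+c_{j_0-1}\gamma_{j_0-1}\in\Gamma$ from $\gamma^*$ (translation by an element of $\Gamma$ does not affect the conclusion), thereby reducing at once to the case where the \emph{leading} coefficient is outside $\k$. After that reduction it picks $\varepsilon$ with $[\varepsilon]_\k<[\gamma_1]_\k$ and runs a two-line contradiction: if $|\gamma^*-\gamma|\le\varepsilon$ then $[\gamma]_\k=[\gamma_1]_\k$, the Hahn property in $\Gamma$ gives $c\in\k$ with $[\gamma-c\gamma_1]_\k<[\gamma_1]_\k$, and expanding $\gamma^*-\gamma=(c_1-c)\gamma_1+\cdots-(\gamma-c\gamma_1)$ exhibits a leading term of class $[\gamma_1]_{\k^*}$, contradiction. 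You avoid the normalization step and instead carry along the earlier $\k$-coefficients, controlling them via a $\k^*$-linear projection onto the $\k^*\mu_{j_0}$-summand. That works, but it costs you the construction of a graded splitting of $\Gamma_{\le K}$, a scalar-extension argument for the compatibility $(\Gamma_{\k^*})_{<J}=\k^*\otimes_\k\Gamma_{<J}$, and the verification that $\phi$ maps $\Gamma_{\le K}$ into $\k$. The paper's normalization makes all of that unnecessary: once $c_1\notin\k$, a single application of the Hahn property in $\Gamma$ closes the argument. Both proofs invoke the Hahn property of $\Gamma$ exactly once; the difference is that you isolate the index $j_0$ by building a functional, whereas the paper isolates it by a translation by an element of $\Gamma$. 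If you want to keep your functional-based proof, you should at least record the order-preserving identification $[\Gamma]_\k\cong[\Gamma_{\k^*}]_{\k^*}$ explicitly when passing between the two class sets, and spell out why $\Gamma_{\le J}=\Gamma_{<J}\oplus\k\mu_{j_0}$ (each $\alpha$ with $[\alpha]_\k=J$ has, by the Hahn property, a $c\in\k^\times$ with $[\alpha-c\mu_{j_0}]_\k<J$); as written these are gestured at rather than proved.
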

\begin{proof} Let $\gamma^*\in \Gamma_{\k^*}\setminus \Gamma$, so $\gamma^*=c_1\gamma_1+\cdots + c_m\gamma_m$
with $m\ge 1$, $c_1,\dots, c_m\in (\k^*)^\times$, $\gamma_1\dots, \gamma_m\in \Gamma^{>}$, $[\gamma_1]_{\k} > \cdots >[\gamma_m]_{\k}$. To show that $\gamma^*$ has the claimed property we can assume $c_1\notin \k$. Take any $\varepsilon\in \Gamma^{>}$ with $[\varepsilon]_{\k} < [\gamma_1]_{\k}$, and assume towards a contradiction that $\gamma\in \Gamma$ and
$|\gamma^*-\gamma|\le \varepsilon$. Then $[\gamma]_{\k^*}=[\gamma^*]_{\k^*}=[\gamma_1]_{\k^*}$, so~$[\gamma]_{\k}=[\gamma_1]_{\k}$, and hence $[\gamma-c\gamma_1]_{\k}<[\gamma_1]_{\k}$ with $c\in \k$. In view of
$$\gamma^*-\gamma\ =\ (c_1-c)\gamma_1 + c_2\gamma_2 + \cdots + c_m\gamma_m - (\gamma-c\gamma_1)$$
and $c_1\ne c$, this yields a contradiction.  
\end{proof}

\noindent
We also have the following universal property:

\begin{cor}\label{ascha} Any embedding $\Gamma \to \Gamma^*$ of ordered vector spaces over $\k$ into an ordered vector space $\Gamma^*$ over 
$\k^*$ such that the induced map $[\Gamma]_{\k} \to [\Gamma^*]_{\k^*}$ is injective extends uniquely to an embedding $\Gamma_{\k^*} \to \Gamma^*$ of ordered vector spaces over $\k^*$.
\end{cor}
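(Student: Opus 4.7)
The plan is to use the universal property of the tensor product to get a unique $\k^*$-linear map extending the given embedding, and then verify that this map preserves order (which automatically gives injectivity, hence being an embedding).

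Let $i\colon\Gamma\to\Gamma^*$ be the given embedding. For uniqueness, any $\k^*$-linear extension $\tilde\imath\colon\Gamma_{\k^*}\to\Gamma^*$ of $i$ is determined on all of $\Gamma_{\k^*}=\k^*\otimes_\k\Gamma$ by its restriction to $\Gamma$, since $\Gamma_{\k^*}$ is spanned by $\Gamma$ as a $\k^*$-vector space. For existence of $\tilde\imath$ as a $\k^*$-linear map, the universal property of the tensor product gives a unique $\k^*$-linear map $\tilde\imath\colon\k^*\otimes_\k\Gamma\to\Gamma^*$ sending $c\otimes\gamma$ to $c\,i(\gamma)$, and this extends $i$ via the identification of $\Gamma$ with $1\otimes\Gamma\subseteq\Gamma_{\k^*}$.

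The substance is to show $\tilde\imath$ is order-preserving (this will also yield injectivity, since then $\tilde\imath(\gamma)=0$ iff $\gamma=0$). First I would check that the induced map $[\gamma]_{\k}\mapsto[i(\gamma)]_{\k^*}$ on archimedean classes is order-preserving: if $[\gamma]_{\k}<[\delta]_{\k}$ in $[\Gamma]_{\k}$, then $|\gamma|<|\delta|$, so $|i(\gamma)|<|i(\delta)|$, while injectivity of the induced map rules out $[i(\gamma)]_{\k^*}=[i(\delta)]_{\k^*}$, giving $[i(\gamma)]_{\k^*}<[i(\delta)]_{\k^*}$.

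Now take any $\gamma\in\Gamma_{\k^*}^{\ne}$ and write it in the canonical form from the definition of the order on $\Gamma_{\k^*}$:
\[
\gamma\ =\ c_1\gamma_1+\cdots+c_m\gamma_m,\qquad c_j\in(\k^*)^\times,\ \gamma_j\in\Gamma^{>},\ [\gamma_1]_{\k}>\cdots>[\gamma_m]_{\k},
\]
so that $\gamma>0$ iff $c_1>0$. Applying $\tilde\imath$ gives
\[
\tilde\imath(\gamma)\ =\ c_1\,i(\gamma_1)+\cdots+c_m\,i(\gamma_m),
\]
with $i(\gamma_j)\in(\Gamma^*)^{>}$ and, by the previous paragraph, $[i(\gamma_1)]_{\k^*}>\cdots>[i(\gamma_m)]_{\k^*}$. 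The key step is the standard ``leading term dominates'' argument in the ordered $\k^*$-vector space $\Gamma^*$: the element $c_2\,i(\gamma_2)+\cdots+c_m\,i(\gamma_m)$ has $\k^*$-archimedean class at most $[i(\gamma_2)]_{\k^*}<[c_1\,i(\gamma_1)]_{\k^*}=[i(\gamma_1)]_{\k^*}$, so $|c_2\,i(\gamma_2)+\cdots+c_m\,i(\gamma_m)|<|c_1\,i(\gamma_1)|$, and hence $\operatorname{sign}\tilde\imath(\gamma)=\operatorname{sign}\bigl(c_1\,i(\gamma_1)\bigr)=\operatorname{sign}(c_1)$. Thus $\tilde\imath(\gamma)>0$ iff $c_1>0$ iff $\gamma>0$, so $\tilde\imath$ is an embedding of ordered $\k^*$-vector spaces, as required.

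The only even mildly tricky point is verifying that the induced map on archimedean classes is \emph{order-preserving}, and this follows immediately from the assumed injectivity together with the order-preservation of $i$; everything else is bookkeeping with the canonical form provided by the construction of $\Gamma_{\k^*}$.
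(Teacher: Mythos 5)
Your proof is correct, and it fills in a verification the paper omits (the corollary is stated immediately after Lemma~\ref{ndha} with only the phrase ``We also have the following universal property:'' and no argument). Your route is the natural one: uniqueness and $\k^*$-linearity come from the universal property of $\k^*\otimes_\k\Gamma$, and order-preservation comes directly from the canonical decomposition $\gamma=c_1\gamma_1+\cdots+c_m\gamma_m$ with $[\gamma_1]_\k>\cdots>[\gamma_m]_\k$ that the paper uses to \emph{define} the ordering on $\Gamma_{\k^*}$, combined with the hypothesis that the induced map on archimedean classes is injective. All the individual steps check out: the induced map on classes is well-defined because $i$ is a $\k$-linear order-embedding; it is order-preserving because $|a|<|b|$ forces $[a]\le[b]$ (take $c=1$ in the definition) and injectivity then upgrades this to $<$; and the leading-term-dominates estimate uses only the general fact $[\alpha+\beta]_{\k^*}\le\max\big([\alpha]_{\k^*},[\beta]_{\k^*}\big)$ recorded in the paper, which holds in any ordered $\k^*$-vector space, so no Hahn-type assumption on $\Gamma^*$ is needed. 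One small stylistic remark: it would have been cleaner to say explicitly that strict order-preservation already implies injectivity, rather than framing injectivity via the kernel; as written this is a minor informality, not a gap.
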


\noindent
Let $(\Gamma,\psi)$ be an $H$-couple over $\k$ of Hahn type and 
$\k^*$ an ordered field extension of~$\k$. The
$H$-couple $(\Gamma, \psi)_{\k^*}:= (\Gamma_{\k^*}, \psi_{\k^*})$ over $\k^*$ is
determined by requiring that~$\psi_{\k^*}$ extends $\psi$. 
Note that then $(\Gamma, \psi)_{\k^*}$
is also of Hahn type and has the same $\Psi$-set as~$(\Gamma,\psi)$. The following is close to \cite[Lemma 3.7]{AvdD}, whose proof uses a form of Hahn's Embedding Theorem. Here we use instead Lemma~\ref{ndha}. 

\begin{lemma}\label{scpr} If $\gamma\in \Gamma$ is a gap in $(\Gamma,\psi)$,
then $\gamma$ remains a gap in $(\Gamma,\psi)_{\k^*}$.
If $\gamma^*$ is a gap in $(\Gamma,\psi)_{\k^*}$, then $\gamma^*\in \Gamma$. Thus
$(\Gamma, \psi)$ has asymptotic integration if and only if 
$(\Gamma, \psi)_{\k^*}$ has asymptotic integration.
\end{lemma}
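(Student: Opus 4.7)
The plan is to establish the first two assertions directly and deduce the equivalence of asymptotic integration formally via the trichotomy, together with the observation (recorded immediately before the lemma) that $(\Gamma,\psi)$ and $(\Gamma,\psi)_{\k^*}$ have the same $\Psi$-set and so in particular are grounded together.

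For the first assertion, I would fix a gap $\gamma\in\Gamma$ and argue by contradiction. Since $\Psi_{\k^*}=\Psi<\gamma$, failure of the gap condition in $(\Gamma,\psi)_{\k^*}$ forces $(\eta^*)'\leq\gamma$ for some $\eta^*\in\Gamma_{\k^*}^{>}$. Using the order-preserving bijection $[\Gamma]_{\k}\to[\Gamma_{\k^*}]_{\k^*}$, pick $\gamma_0\in\Gamma^{>}$ with $[\gamma_0]_{\k^*}=[\eta^*]_{\k^*}$; Hahn type of the scalar extension then yields $\psi_{\k^*}(\eta^*)=\psi(\gamma_0)$. Set $\delta:=\gamma-\psi(\gamma_0)$, which lies in $\Gamma^{>}$ because $\gamma>\Psi$. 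The hypothesis reads $\eta^*\leq\delta$. On the other hand, applying the gap condition in $(\Gamma,\psi)$ to the positive element $\delta$ gives $\delta'>\gamma$, i.e.~$\psi(\delta)>\psi(\gamma_0)$; since $\psi$ is strictly decreasing on $\k$-archimedean classes in an $H$-couple of Hahn type (by Hahn type together with (HC)), this forces $[\delta]_{\k}<[\gamma_0]_{\k}$, and hence $[\delta]_{\k^*}<[\eta^*]_{\k^*}$. Specializing the definition of $<$ on archimedean classes to the scalar $c=1$ yields $\delta<\eta^*$, contradicting $\eta^*\leq\delta$.

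For the second assertion, let $\gamma^*$ be a gap in $(\Gamma,\psi)_{\k^*}$ and suppose, for contradiction, that $\gamma^*\notin\Gamma$. A gap precludes groundedness of $(\Gamma,\psi)_{\k^*}$, so $[\Gamma_{\k^*}^{\ne}]_{\k^*}$ has no least element, and via the class bijection $[\Gamma^{\ne}]_{\k}$ has no least element either; Lemma~\ref{ndha} then supplies $\varepsilon\in\Gamma^{>}$ with $|\gamma^*-\gamma|>\varepsilon$ for every $\gamma\in\Gamma$. I claim $\gamma^*-\varepsilon$ is a second gap in $(\Gamma,\psi)_{\k^*}$: for each $\alpha\in\Psi\subseteq\Gamma$, $\alpha<\gamma^*$ together with $|\gamma^*-\alpha|>\varepsilon$ forces $\alpha<\gamma^*-\varepsilon$, so $\Psi<\gamma^*-\varepsilon$; and $\gamma^*-\varepsilon<\gamma^*<(\Gamma_{\k^*}^{>})'$ is immediate. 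This contradicts the uniqueness of a gap, and so $\gamma^*\in\Gamma$ after all. The third assertion is then formal: the two couples are grounded together and, by the first two parts, have a gap together, so by the trichotomy they have asymptotic integration together.

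The main obstacle is the first assertion; its crux is the translation of the gap inequality $\delta'>\gamma$ into the strict $\k$-archimedean-class comparison $[\delta]_{\k}<[\gamma_0]_{\k}$ via Hahn type, and the subsequent upgrade from the $\k^*$-class inequality $[\delta]_{\k^*}<[\eta^*]_{\k^*}$ to the pointwise $\delta<\eta^*$ by invoking the definition of $<$ on archimedean classes at $c=1$. Once this is in place, the second assertion reduces to a clean ``shift by $\varepsilon$'' argument collapsed by uniqueness of gaps, and the third becomes bookkeeping via the trichotomy.
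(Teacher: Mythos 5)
Your proof is correct. For the second assertion you and the paper both rely on Lemma~\ref{ndha} in essentially the same way (though you go through the extra step of manufacturing a second gap and invoking uniqueness, whereas the paper directly observes that $\alpha:=\varepsilon^\dagger<\gamma^*<\varepsilon'$ with $\varepsilon'-\varepsilon^\dagger=\varepsilon$ already contradicts Lemma~\ref{ndha}); and the deduction of the asymptotic-integration equivalence via the trichotomy and $\Psi_{\k^*}=\Psi$ is the same bookkeeping in both. The genuine divergence is in the first assertion. The paper again appeals to Lemma~\ref{ndha}: from a failed gap one extracts $\alpha\in\Gamma_{\k^*}^{>}\setminus\Gamma$ with $\alpha'\le\gamma<(\Gamma^{>})'$, hence $0<\alpha<\Gamma^{>}$, contradicting Lemma~\ref{ndha}. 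Your argument instead stays entirely inside the archimedean-class combinatorics: transport $\eta^*$ to a representative $\gamma_0\in\Gamma^{>}$ via the class bijection, set $\delta=\gamma-\psi(\gamma_0)$, and use strict monotonicity of $\psi$ on $\k$-archimedean classes (Hahn type plus (HC)) to turn the gap inequality $\delta'>\gamma$ into $[\delta]_{\k^*}<[\eta^*]_{\k^*}$ and then $\delta<\eta^*$, contradicting $\eta^*\le\delta$. Your route is arguably cleaner here: it sidesteps Lemma~\ref{ndha} for this part, and it also avoids the (mild) gap in the paper's wording, which asserts $\gamma=\alpha'$ rather than the inequality $\alpha'\le\gamma$ that is all one immediately gets from the failure of the gap condition. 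Both proofs implicitly use that a gap in $(\Gamma,\psi)$ rules out groundedness, so $[\Gamma^{\ne}]_{\k}$ has no least element; you need this only in the second part (to apply Lemma~\ref{ndha}), the paper needs it in both.
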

\begin{proof}  Suppose towards a contradiction that 
$\gamma\in \Gamma$ is a gap in $(\Gamma,\psi)$, but not in~$(\Gamma,\psi)_{\k^*}$. Then $\gamma=\alpha'$ with $\alpha\in \Gamma_{\k^*}^{>}\setminus \Gamma$. From $\gamma< (\Gamma^{>})'$ we get 
$0<\alpha<\Gamma^{>}$, but this
contradicts that by Lemma~\ref{ndha} 
we have $|\alpha |> \varepsilon$ for some $\varepsilon\in \Gamma^{>}$. 

Next, assume $\gamma^*$ is a gap in $(\Gamma,\psi)_{\k^*}$.
Then $\Psi < \gamma^* < (\Gamma^{>})'$, and for all 
$\varepsilon\in \Gamma^{>}$ there are $\alpha\in \Psi$ and
$\beta\in (\Gamma^{>})'$ (namely $\alpha:= \varepsilon^\dagger$
and $\beta:= \varepsilon'$) with $\beta-\alpha \le \varepsilon$.
In view of Lemma~\ref{ndha} this yields $\gamma^*\in \Gamma$. 
\end{proof}

\subsection*{Normalized $H$-couples} Let 
$(\Gamma, \psi)$ be an $H$-couple over $\k$. By \cite[Section~9.2]{ADH}, if~$\Psi\cap \Gamma^{>}\ne \emptyset$, then $\psi(\gamma)=\gamma$ for a unique $\gamma\in \Gamma^{>}$; this unique fixed point of 
$\psi$ on $\Gamma^{>}$ is then denoted by $1$. Referring to
$(\Gamma,\psi)$ as a {\em normalized $H$-couple\/} means that~${\Psi\cap \Gamma^{>}\ne \emptyset}$, and that
we consider $\Gamma$ as equipped with this fixed point $1$ as a distinguished element.  
(The term ``normalized'' is justified, because for any $H$-couple
over $\k$ with underlying ordered vector space $\Gamma\ne \{0\}$ we can arrange~${\Psi\cap \Gamma^{>}\ne \emptyset}$ by replacing its function $\psi$ with a suitable ``shift'' $\alpha+\psi$ where $\alpha\in \Gamma$.) 
For minor technical reasons it is convenient to restrict our attention in the remainder of this paper to 
normalized $H$-couples; this is hardly a loss of generality, as we saw. Note also that the $H$-couple of $\T$ is normalized 
by taking $1=v(x^{-1})$.

\medskip\noindent
Below we construe a normalized $H$-couple over $\k$ as a $2$-sorted structure
$$\Ga\ =\ \big((\Gamma,\psi), \k; \sc\!\big)$$ where $(\Gamma,\psi)$ is an 
$H$-couple as defined in the beginning of Section~\ref{prelim},
$\k$ is an ordered field,
and $\sc\colon \k\times \Gamma \to \Gamma$ is a scalar multiplication
that makes $\Gamma$ into an ordered vector space over $\k$
(but we shall write $c\gamma$ instead of $\sc(c,\gamma)$ for $c\in \k$ and $\gamma\in \Gamma$), such that $\psi(c\gamma)=\psi(\gamma)$
for $c\in \k^\times$, $\gamma\in \Gamma$; in addition we assume $\Gamma$ to be equipped with an element $1>0$ such that $\psi(1)=1$. Such $\Ga$ is said to be  {\em of Hahn type\/} if 
the $H$-couple $(\Gamma,\psi)$ over $\k$ is of Hahn type as
defined in Section~\ref{prelim}. 
In the same way we may consider a normalized $H$-triple over $\k$ as a $2$-sorted structure
$$\Ga=\big((\Gamma,\psi,P), \k; \sc\!\big).$$

\subsection*{The language and theory of normalized $H$-triples of Hahn type} We 
construe a normalized $H$-triple $\Ga=\big((\Gamma,\psi,P), \k; \sc\!\big)$ of Hahn type as an 
$\mathcal{L}_H$-structure, where $\mathcal{L}_H$ is the two-sorted language with the following non-logical symbols: \begin{enumerate}
\item[(i)] $P$, $<$, $0$, $1$, $\infty$, $-$, $+$, $\psi$, interpreted as usual in 
$\Gamma_{\infty}:=\Gamma\cup\{\infty\}$, the linear ordering on $\Gamma$ being extended to a linear order on $\Gamma_{\infty}$
by $\gamma<\infty$ for $\gamma\in \Gamma$, and
 with $\infty$ serving as a default value by setting $-\infty=\infty$,
$\gamma+\infty=\infty+\gamma=\infty+\infty=\psi(0)=\psi(\infty)=\infty$ for $\gamma\in \Gamma$;
\item[(ii)] $<$, $0$, $1$, $\infty$, $-$, $+$, $\,\cdot\,$, interpreted as usual  in 
$\k_{\infty}:=\k\cup\{\infty\}$, the linear ordering on $\k$ being extended to a linear order on $\k_{\infty}$
by $c<\infty$ for $c\in \k$, and
 with $\infty$ serving as a default value by setting $-\infty=\infty$,
$c+\infty=\infty+c=\infty+\infty=c\infty=\infty c= \infty\infty=\infty$ for $c\in \k$;
\item[(iii)] a symbol $\sc$ for the map $\k_{\infty}\times \Gamma_{\infty} \to \Gamma_{\infty}$ that is the scalar multiplication on $\k\times \Gamma$, and taking the value $\infty$ at all other points of $\k_{\infty}\times \Gamma_{\infty}$;
\item[(iv)]  a symbol $:$ for the function $\Gamma_{\infty}^2\to \k_{\infty}$ that assigns to every $(\alpha,\beta)\in \Gamma^2$ with~$[\alpha]_{\k}\le [\beta]_{\k}$ and $\beta\ne 0$ the unique scalar
$\alpha:\beta=c\in \k$ such that~$[{\alpha-c\beta}]_{\k}<[\beta]_{\k}$, and assigns to all other pairs in 
$\Gamma_{\infty}^2$ the value $\infty$.
\end{enumerate}

\noindent
The symbols in (i) should be distinguished from those in (ii) even though we use the same written signs for convenience. The two default values $\infty$ are included to make all primitives totally defined. Note that in (iv) we have $\alpha:\beta=0$ if $[\alpha]_{\k} < [\beta]_{\k}$.

Using $a1:b1=a/b$ for 
$a,b\in \k$ with $b\ne 0$, we see that a substructure of a normalized $H$-triple of Hahn type is also a normalized $H$-triple of Hahn type, with possibly smaller scalar field. 
  Thus the $\mathcal{L}_H$-theory of normalized $H$-triples
  of Hahn type has a universal axiomatization (which would be easy to specify). Let there be given normalized $H$-triples of Hahn type,
  $$ \Ga_0\ =\ \big((\Gamma_0,\psi_0,P_0), \k_0; \sc_0\!\big)\ \text{ and }\ 
\Ga\ =\ \big((\Gamma,\psi,P), \k; \sc\!\big).$$
An {\em embedding\/}
$\Ga_0 \to \Ga$ is a pair $i=(i_{\operatorname{v}}, i_{\operatorname{s}})$ whose vector part
$i_{\operatorname{v}}\colon \Gamma_0\to \Gamma$ is an embedding of ordered abelian 
group and whose scalar part $i_{\operatorname{s}}\colon \k_0 \to \k$ is an embedding of
ordered fields such that $i_{\operatorname{v}}(c\gamma)=i_{\operatorname{s}}(c)i_{\operatorname{v}}(\gamma)$ and
$\gamma\in P_0\Leftrightarrow i_{\operatorname{v}}(\gamma)\in P$  for all
$c\in \k_0$ and $\gamma\in \Gamma_0$, and  $i_{\operatorname{v}}\big(\psi_0(\gamma)\big)=\psi\big(i_{\operatorname{v}}(\gamma)\big)$ for all nonzero $\gamma\in \Gamma_0$ (and
so $i_{\operatorname{v}}(1)=1$ and $i_c(\alpha:\beta)=i_{\operatorname{v}}(\alpha):i_{\operatorname{v}}(\beta)$ for all $\alpha,\beta\in \Gamma$). If $\k_0=\k$, then an embedding
$e\colon(\Gamma_0, \psi_0, P_0) \to (\Gamma,\psi,P)$ 
of $H$-triples over $\k$ in the usual sense yields an embedding
$(e,\text{id}_{\k})\colon \Ga_0 \to \Ga$ as above.

\subsection*{Quantifier elimination} Let $T_H$ be the 
$\mathcal{L}_H$-theory of normalized closed
$H$-triples of Hahn type, and recall that the $H$-couple of $\T$ is naturally a model of $T_H$. In this subsection we let
$\Ga = \big((\Gamma,\psi,P), \k; \sc\!\big)$  and 
$\Ga^* = \big((\Gamma^*,\psi^*,P^*), \k^*; \sc^*\!\big)$ 
denote 
{\em normalized closed 
$H$-triples of Hahn type}, construed as models of $T_H$. 
The key embedding result is as follows:

\begin{prop}\label{propeq} Assume $\Ga^*$ is $\kappa$-saturated for $\kappa=|\Gamma|^+$. Let $\Ga_0$ be a substructure of $\Ga$ with scalar field $\k_0$. Let an embedding $i_0\colon \Ga_0\to \Ga^*$ be given, and an embedding~$e\colon \k \to \k^*$ of ordered fields such that $e|_{\k_0}=(i_0)_{\operatorname{s}}$. Then $i_0$ can be extended to an embedding $i\colon \Ga\to \Ga^*$ such that $i_{\operatorname{s}}=e$.
\end{prop}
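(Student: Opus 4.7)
My plan is a two-step reduction: first enlarge $\Ga_0$ inside $\Ga$ so that its scalar field becomes all of $\k$, then apply the one-sorted quantifier elimination of Theorem~\ref{hclQE} to extend to all of $\Ga$.

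For the scalar-extension step I set $\Gamma_1 := \k\cdot\Gamma_0 \subseteq \Gamma$, equip it with $\psi_1 := \psi|_{\Gamma_1^{\ne}}$ and $P_1 := P\cap\Gamma_1$, and observe that $\Ga_1 := \big((\Gamma_1,\psi_1,P_1),\k;\sc\!\big)$ is a substructure of $\Ga$ with scalar field $\k$ that contains $\Ga_0$. Hahn type is automatically inherited by $\Ga_0$ as a substructure (the division term $\alpha\!:\!\beta$ of $\mathcal{L}_H$ takes values in the scalar field of any substructure), and via it the archimedean-class map $[\Gamma_0]_{\k_0}\to[\Gamma]_{\k}$ is seen to be injective, so Corollary~\ref{ascha} identifies $\Gamma_1$ canonically with the scalar extension $(\Gamma_0)_{\k}$. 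The same injectivity, now for $[\Gamma_0]_{\k_0}\to[\Gamma^*]_{\k^*}$ obtained from $i_0$, $e$, and the Hahn-type property of $\Ga^*$, allows one more application of Corollary~\ref{ascha} to produce a unique $\k$-linear ordered embedding $i_1\colon\Gamma_1\to\Gamma^*$ extending $i_0$ with scalar part $e$. Preservation of $\psi$ under $i_1$ follows from Hahn normal-form expansions together with $\psi(c\gamma)=\psi(\gamma)$ and Hahn type of $\Ga^*$. For preservation of $P$, the set $i_1^{-1}(P^*)$ is automatically an $H$-cut in $(\Gamma_1,\psi_1)$ extending $P_0$; by Lemma~\ref{scpr} the gap-versus-grounded-versus-asymptotic-integration trichotomy of $(\Gamma_0,\psi_0)$ survives scalar extension, with any gap of $\Gamma_1$ already lying in $\Gamma_0$, so there is at most one $H$-cut in $(\Gamma_1,\psi_1)$ extending $P_0$, forcing $i_1^{-1}(P^*)=P_1$.

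For the second step I view $\Ga$ and $\Ga^*$ as $\mathcal{L}_{\k}$-structures (pulling back the $\k^*$-action on $\Ga^*$ along $e$); both are closed $H$-triples over $\k$, since closedness only requires $\Q$-divisibility of the underlying group. Because $\Ga$ is normalized, $c\mapsto c\cdot 1$ embeds $\k$ into $\Gamma$, whence $|\k|\le|\Gamma|$ and the $|\Gamma|^+$-saturation of $\Ga^*$ as an $\mathcal{L}_H$-structure transfers to $|\Gamma|^+$-saturation as an $\mathcal{L}_{\k}$-structure (each $\mathcal{L}_{\k}$-scalar symbol corresponds to an $\mathcal{L}_H$-parameter from $\k$). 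Theorem~\ref{hclQE} gives QE for $T_{\k}$, and the standard consequence of QE produces an extension of $i_1$ to an $\mathcal{L}_{\k}$-embedding $i\colon\Ga\to\Ga^*$; its scalar part is still $e$, yielding the required $\mathcal{L}_H$-embedding.

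The delicate point I expect is the $P$-preservation in Step~1. An element-by-element verification looks awkward --- in the grounded case, for instance, an element of $\Gamma_1$ strictly above $\max\Psi_0$ need not be a derivative of anything in $\Gamma_1$, so one cannot directly transport it into $((\Gamma^*)^{>})'$. The indirect argument above, combining Lemma~\ref{scpr} with uniqueness of $H$-cuts extending $P_0$, lets me sidestep such computations entirely.
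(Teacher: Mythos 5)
Your proposal is correct and follows exactly the same two-step reduction the paper uses: first use Corollary~\ref{ascha} together with Lemma~\ref{scpr} to extend $i_0$ to a substructure of $\Ga$ with scalar field all of $\k$, then apply the one-sorted embedding result from the proof of Theorem~\ref{hclQE} after pulling the $\k^*$-action back to $\k$ along $e$. Your write-up simply fills in the details—$\psi$-preservation via Hahn normal forms, $P$-preservation via uniqueness of $H$-cuts over $P_0$ using Lemma~\ref{scpr}, and the cardinality estimate $|\k|\le|\Gamma|$ needed for the saturation transfer—that the paper leaves implicit.
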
 
\begin{proof} By Corollary~\ref{ascha} on extending scalars,
the remarks following it, and (to handle the $P$-predicate) Lemma~\ref{scpr} we can reduce to the case $\k_0=\k$. It remains to appeal to
the embedding result established in the proof of Theorem~\ref{hclQE}.   
\end{proof}

\noindent
In what follows, {\em formula\/} means {\em $\mathcal{L}_H$-formula}. Let $x=(x_1,\dots, x_m)$ denote a tuple of distinct
scalar variables and $y=(y_1,\dots,y_n)$ a tuple of distinct vector variables. 

\begin{cor}\label{coreq} Suppose that $\Ga$ is a substructure of
$\Ga^*$. Then
$$\Ga\preceq \Ga^* \text{ \textup{(}as $\mathcal{L}_H$-structures\textup{)}}\quad \Longleftrightarrow\quad \k \preceq \k^* \text{ \textup{(}as ordered fields\textup{)}.}$$
\end{cor}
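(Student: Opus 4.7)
\smallskip
\noindent
\textbf{Plan.} The direction $(\Rightarrow)$ is immediate: the scalar-sort symbols of $\mathcal{L}_H$ are verbatim those of the language of ordered fields, so every ordered-field formula in variables ranging over the scalar sort is already an $\mathcal{L}_H$-formula. Thus an $\mathcal{L}_H$-elementary extension restricts to an elementary extension of the scalar field. I will therefore concentrate on $(\Leftarrow)$.

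Assume $\Ga \subseteq \Ga^*$ with $\k \preceq \k^*$. The plan is a standard two-sided elementary back-and-forth driven entirely by Proposition~\ref{propeq}. I will inductively construct an interleaved chain of models of $T_H$,
\[
\Ga\ =\ \Ga_0\ \subseteq\ \Ga^*_0\ =\ \Ga^*\ \subseteq\ \Ga_1\ \subseteq\ \Ga^*_1\ \subseteq\ \Ga_2\ \subseteq\ \Ga^*_2\ \subseteq\ \cdots,
\]
each inclusion being that of an $\mathcal{L}_H$-substructure, with $\Ga_n \preceq \Ga_{n+1}$ and $\Ga^*_n \preceq \Ga^*_{n+1}$ as $\mathcal{L}_H$-structures, and maintaining the invariant $\k_n \preceq \k^*_n$ on the scalar sort. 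Writing $\Ga_\omega$ for the union, we have $\bigcup_n \Ga_n = \Ga_\omega = \bigcup_n \Ga^*_n$; by the Tarski--Vaught chain theorem, $\Ga \preceq \Ga_\omega$ and $\Ga^* \preceq \Ga_\omega$, and since $\Ga \subseteq \Ga^* \subseteq \Ga_\omega$ one concludes $\Ga \preceq \Ga^*$ at once.

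The inductive step runs as follows. Given $\Ga_n \subseteq \Ga^*_n$ with $\k_n \preceq \k^*_n$, I choose $\Ga_{n+1}$ to be a $|\Gamma^*_n|^+$-saturated elementary extension of $\Ga_n$, so that $\k_{n+1}$ is a $|\k^*_n|^+$-saturated elementary extension of $\k_n$. By the standard saturation principle (if $M \preceq N$ and $M \preceq M'$ with $M'$ being $|N|^+$-saturated, then $N$ embeds elementarily into $M'$ over $M$), applied with $M = \k_n$, $N = \k^*_n$, $M' = \k_{n+1}$, I obtain an elementary embedding $e\colon \k^*_n \to \k_{n+1}$ fixing $\k_n$. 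Now Proposition~\ref{propeq}, applied with $\Ga_n$ as the substructure, the inclusion $\Ga_n \hookrightarrow \Ga_{n+1}$ in the role of ``$i_0$'', and $e$ as the prescribed scalar extension, delivers an $\mathcal{L}_H$-embedding $\Ga^*_n \to \Ga_{n+1}$ extending the inclusion on $\Ga_n$ with scalar part $e$. I identify $\Ga^*_n$ with its image, giving $\Ga^*_n \subseteq \Ga_{n+1}$ and, since $e$ is elementary, $\k^*_n \preceq \k_{n+1}$. Mirroring the construction with the roles of $\Ga_n$ and $\Ga^*_n$ swapped produces $\Ga^*_{n+1} \succeq \Ga^*_n$ a $|\Gamma_{n+1}|^+$-saturated elementary extension together with an $\mathcal{L}_H$-embedding $\Ga_{n+1} \hookrightarrow \Ga^*_{n+1}$ over $\Ga^*_n$ whose scalar part is elementary, which restores the invariant $\k_{n+1} \preceq \k^*_{n+1}$.

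The real work is already encapsulated in Proposition~\ref{propeq}; the ``main obstacle'' is therefore not new mathematics but the careful bookkeeping to keep the invariant $\k_n \preceq \k^*_n$ through every identification and to choose saturation cardinals sufficient both for producing the scalar-sort elementary embeddings and for feeding Proposition~\ref{propeq}. The assumption $\k \preceq \k^*$ of the corollary is precisely what seeds this invariant at stage $0$, and the back-and-forth guarantees it survives each extension.
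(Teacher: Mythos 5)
Your proof is correct, but takes a genuinely different route from the paper's. The paper proves $(\Leftarrow)$ by a single induction on formulas: for $\phi = \exists z\,\theta$ with $\Ga^* \models \phi(c,\gamma)$, it applies Proposition~\ref{propeq} once to embed $\Ga^*$ (with elementary scalar part) into a $|\Gamma^*|^+$-saturated $\Ga_1 \succeq \Ga$, invokes the inductive hypothesis on $\theta$ for the new pair $\big(i(\Ga^*), \Ga_1\big)$ to transfer the witness upward, and concludes from $\Ga \preceq \Ga_1$. The inductive hypothesis there is crucially a universal statement over \emph{all} substructure pairs with the elementary-scalar-field property, not just the fixed $\Ga \subseteq \Ga^*$; the paper flags this only briefly (``for all $\Ga$ and $\Ga^*$ as in the hypothesis of the lemma''). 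Your sandwich chain replaces that inner self-reference with an explicit iteration of Proposition~\ref{propeq} plus the Tarski--Vaught chain lemma, which is the standard alternative and is arguably cleaner to check, at the modest cost of committing to an infinite construction. Both arguments lean on exactly the same two ingredients, Proposition~\ref{propeq} and sufficient saturation of the scalar sort, and buy essentially the same thing. One small point worth making explicit in your version: $|\k| \le |\Gamma|$ via the injection $c \mapsto c1$, which is what guarantees that $|\Gamma^*_n|^+$-saturation of $\Ga_{n+1}$ makes its scalar field $|\k^*_n|^+$-saturated as you use.
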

\begin{proof} The direction $\Rightarrow$ being trivial, we assume
$\k\preceq \k^*$ and shall derive $\Ga\preceq \Ga^*$. By induction on formulas $\phi(x,y)$ (with $x$ and $y$ as above) we show that
for all $\Ga$ and $\Ga^*$ as in the hypothesis of the lemma and
all $c\in \k^m$ and $\gamma\in \Gamma^n$,
\begin{equation}\label{eq:ast}\tag{$\ast$} \Ga\models \phi(c,\gamma)\quad \Longleftrightarrow\quad \Ga^*\models \phi(c,\gamma).  
\end{equation}
For the inductive step, let $\phi=\exists z \theta$, where $\theta=\theta(x,y,z)$ is a formula and $z$ is a single variable of the scalar or vector sort. The direction $\Rightarrow$ in 
\eqref{eq:ast} holds by the (implicit) inductive asumption. Assume 
$\Ga^*\models \phi(c,\gamma)$ where $c\in \k^m$ and $\gamma\in \Gamma^n$. Take a $\kappa$-saturated elementary extension
$\Ga_1$ of $\Ga$, where
$\kappa=|\Gamma^*|^+$. Let $\k_1$ be the scalar field of 
$\Gamma_1$. Then we have an elementary embedding $e\colon \k^*\to \k_1$
that is the identity on $\k$. Proposition~\ref{propeq} (with $\Ga$, $\Ga^*$, $\Ga_1$ in the roles of $\Ga_0$, $\Ga$, $\Ga^*$) gives an embedding $i\colon \Ga^*\to \Ga_1$ where $i_{\operatorname{s}}=e$ and $i_{\operatorname{v}}$ is the identity on $\Gamma$. By the (tacit) inductive hypothesis on $\theta$
we obtain $\Ga_1\models \phi(c,\gamma)$, and thus 
$\Ga\models \phi(c,\gamma)$. 
\end{proof}

\noindent
With~$x$,~$y$ as above, call a formula $\eta(x,y)$ a {\em scalar formula\/}
if it has the form $\zeta\big(s_1(x,y),\dots, s_N(x,y)\big)$ where
$\zeta(z_1,\dots, z_N)$ is a formula in the language of
ordered rings (as specified in (ii) of the description of
$\mathcal{L}_H$), where $z_1,\dots, z_N$ are distinct scalar variables and $s_1(x,y),\dots, s_N(x,y)$ are scalar-valued terms of $\mathcal{L}_H$. 

\begin{theorem}\label{theq} Every formula $\phi(x,y)$ is $T_H$-equivalent to a boolean combination of scalar formulas $\eta(x,y)$ and atomic formulas $\alpha(x,y)$.
\end{theorem}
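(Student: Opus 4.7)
The plan is a routine back-and-forth argument that reduces to the two main substantive inputs already established: Proposition~\ref{propeq} and Corollary~\ref{coreq}. By a standard compactness/type-counting reduction, to prove the theorem it suffices to show that whenever $\Ga_1, \Ga_2$ are models of $T_H$ and $(c_1,\gamma_1)\in\k_1^m\times\Gamma_1^n$, $(c_2,\gamma_2)\in\k_2^m\times\Gamma_2^n$ satisfy the same scalar formulas and the same atomic formulas, they satisfy the same $\mathcal{L}_H$-formulas. Everything below establishes this.

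First I would pass to a suitable saturated elementary extension of $\Ga_2$ so that $\Ga_2$ is $|\Gamma_1|^+$-saturated; in particular, its scalar sort $\k_2$ is $|\Gamma_1|^+$-saturated as an ordered field. Let $A_i\subseteq\Ga_i$ be the $\mathcal{L}_H$-substructure generated by $(c_i,\gamma_i)$, with scalar sort $\k_0^{(i)}\subseteq\k_i$. Agreement of atomic types yields an $\mathcal{L}_H$-isomorphism $i_0\colon A_1\to A_2$ sending $(c_1,\gamma_1)\mapsto(c_2,\gamma_2)$, whose scalar component $(i_0)_{\operatorname{s}}\colon\k_0^{(1)}\to\k_0^{(2)}$ is an isomorphism of ordered rings.

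Next I would exploit the structure of scalar formulas: since every element of $\k_0^{(i)}$ has the form $s(c_i,\gamma_i)$ for some scalar-valued term $s(x,y)$, a scalar formula $\zeta\bigl(s_1(x,y),\dots,s_N(x,y)\bigr)$ holds of $(c_i,\gamma_i)$ in $\Ga_i$ if and only if the $\mathcal{L}_{or}$-formula $\zeta$ holds in $\k_i$ at $\bigl(s_1(c_i,\gamma_i),\dots,s_N(c_i,\gamma_i)\bigr)$. Hence the assumed equality of scalar formulas translates, via $(i_0)_{\operatorname{s}}$, into an elementary equivalence of parameter-expanded ordered fields $(\k_1,a)_{a\in\k_0^{(1)}}\equiv(\k_2,(i_0)_{\operatorname{s}}(a))_{a\in\k_0^{(1)}}$. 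Using saturation of $\k_2$, $(i_0)_{\operatorname{s}}$ then extends to an elementary embedding $e\colon\k_1\to\k_2$ of ordered fields.

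Finally I would apply Proposition~\ref{propeq} with $A_1$, $\Ga_1$, $\Ga_2$ in the roles of $\Ga_0$, $\Ga$, $\Ga^*$, using the compatible data $i_0$ and $e$, to obtain an embedding $i\colon\Ga_1\to\Ga_2$ extending $i_0$ with $i_{\operatorname{s}}=e$. Since $e$ is an elementary embedding of ordered fields, Corollary~\ref{coreq} applied to $i(\Ga_1)\subseteq\Ga_2$ promotes $i$ to an elementary embedding, and because $i(c_1,\gamma_1)=(c_2,\gamma_2)$ we conclude $(\Ga_1;c_1,\gamma_1)\equiv(\Ga_2;c_2,\gamma_2)$, as required.

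The genuinely novel content is entirely packaged in Propositions \ref{propeq} and \ref{coreq}; the main, but minor, obstacle in assembling the proof is the bookkeeping step that identifies the complete scalar type of $(c_i,\gamma_i)$ with the complete $\mathcal{L}_{or}$-type of $\k_i$ over parameters from $\k_0^{(i)}$. Once that identification is in place, the argument is the standard Robinson-style test for relative quantifier elimination down to the named class of formulas.
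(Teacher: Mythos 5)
Your proof is correct and follows essentially the same route as the paper: reduce to showing that tuples with the same scalar and atomic types have the same full type, pass to a saturated model, build a substructure embedding $i_0$ and an elementary embedding of scalar fields $e$ extending $(i_0)_{\operatorname{s}}$, then apply Proposition~\ref{propeq} and Corollary~\ref{coreq}. The only difference is that you spell out in more detail the observation (left implicit in the paper) that agreement of scalar formulas amounts to elementary equivalence of the parameter-expanded ordered scalar fields, which together with saturation yields $e$.
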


\noindent
As a consequence, extending $T_H$ by axioms that the scalar field is real closed gives outright QE, without requiring scalar formulas.

\begin{proof} Suppose $(c,\gamma)\in \k^m\times \Gamma^n$ and
$(c^*, \gamma^*)\in (\k^*)^m\times (\Gamma^*)^n$ satisfy the same scalar formulas $\eta(x,y)$ and atomic formulas $\alpha(x,y)$
in $\Ga$ and $\Ga^*$, respectively. It suffices to derive from this
assumption that $(c,\gamma)$ and $(c^*,\gamma^*)$ satisfy the same
formulas in~$\Ga$ and~$\Ga^*$. We may assume that $\Ga^*$ is 
$\kappa$-saturated where $\kappa=|\Gamma|^+$. Let $\Ga_0$ with scalar field $\k_0$ be
the substructure of $\Ga$ generated by $(c,\gamma)$.  Since~$(c,\gamma)$ and $(c^*,\gamma^*)$ realize the same atomic formulas 
$\alpha(x,y)$, we have an embedding $i_0\colon \Ga_0\to \Ga^*$ such that~$i_0(c)=c^*$ and $i_0(\gamma)=\gamma^*$. They also realize the same
scalar formulas~$\eta(x,y)$, so we have an elementary embedding
$e\colon \k\to \k^*$ agreeing with $(i_0)_{\operatorname{s}}$ on~$\k_0$.
Proposition~\ref{propeq} then yields an embedding $i\colon\Ga\to \Ga^*$
extending $i_0$ with $i_{\operatorname{s}}=e$. Then $i$ is an elementary
embedding by Corollary~\ref{coreq}, so $(c,\gamma)$ and $(c^*,\gamma^*)$ do indeed satisfy the same formulas in $\Ga$ and 
$\Ga^*$. 
\end{proof}

\subsection*{Discrete definable sets} We are finally ready to prove the theorem announced in the introduction. We state it here in its natural general setting:

\begin{theorem} Let $\Ga=\big((\Gamma,\psi,P), \k; \sc\!\big)$ be a normalized closed 
$H$-triple of Hahn type 
and let $X\subseteq \Gamma$ be definable in
$\Ga$. Then the following are equivalent:\begin{enumerate}
\item[(i)] $X$ is contained in a finite-dimensional $\k$-linear subspace of $\Gamma$;
\item[(ii)] $X$ is discrete;
\item[(iii)] $X$ has empty interior in $\Gamma$.
\end{enumerate}
\end{theorem}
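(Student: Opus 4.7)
The implications (ii)$\Rightarrow$(iii) and (i)$\Rightarrow$(ii) are straightforward. For (ii)$\Rightarrow$(iii), the group $\Gamma$ is a nontrivial densely ordered divisible abelian group, so its order topology has no isolated points and any discrete subset has empty interior. For (i)$\Rightarrow$(ii), I invoke Lemma~\ref{ovsdis1}, after first checking that $[\Gamma^{\ne}]_\k$ has no least element in a normalized closed $H$-couple of Hahn type: using the fixed point $1$ with $\psi(1)=1$, the downward unboundedness of $\Psi$ in $\Gamma$, and asymptotic integration, one produces for each $\delta>0$ a positive element of strictly smaller $\k$-archimedean class.

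The core of the theorem is (iii)$\Rightarrow$(i), which I prove by contrapositive: assuming $X$ is not contained in any finite-dimensional $\k$-linear subspace of $\Gamma$, I will find an open interval inside $X$. The plan is to pass to an elementary extension $\Ga^* = \big((\Gamma^*, \psi^*, P^*), \k^*; \sc^*\!\big) \succ \Ga$ chosen to be sufficiently saturated, meaning both $|\Gamma|^+$-saturated (so that Lemma~\ref{ovsdis2} applies) and, after a second saturation step if needed, $|\Gamma_{\k^*}|^+$-saturated (so that the cofinality hypothesis of Lemma~\ref{keyinterval} can be met). Lemma~\ref{ovsdis2}, applied to the reduct $(\Gamma, X, \k; \sc)$ of $\Ga$, produces an element $\beta \in X^* \setminus \k^*\Gamma = X^* \setminus \Gamma_{\k^*}$, where $\Gamma_{\k^*}=\k^*\Gamma$ is the $\k^*$-span of $\Gamma$ in $\Gamma^*$. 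By Lemma~\ref{scpr} and the remarks of the ``Scalar extension'' subsection, $\Gamma_{\k^*}$ inherits the structure of a normalized closed $H$-triple of Hahn type over $\k^*$, and crucially it is an $\mathcal{L}_H$-substructure of $\Ga^*$, since its scalar field $\k^*$ matches the scalar sort of $\Ga^*$ and hence the $:$-operation of $\Ga^*$ restricts correctly to it.

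I then apply Lemma~\ref{keyinterval} inside $\Ga^*$ with $\Gamma_{\k^*}$ as the ground $H$-triple and $\Gamma^*$ as the ambient, both over $\k^*$, the saturation arranging the cofinality hypothesis. This produces $\delta \in (\Gamma^*)^{>}$ such that every $\gamma \in \Gamma^*$ with $|\beta-\gamma|<\delta$ determines an isomorphism of $H$-triples over $\k^*$ fixing $\Gamma_{\k^*}$ and sending $\beta$ to $\gamma$; by the matching scalar sorts, this is in fact an $\mathcal{L}_H$-isomorphism between $\mathcal{L}_H$-substructures of $\Ga^*$. Invoking the quantifier elimination of Theorem~\ref{theq}, the formula $\phi(y,\bar c,\bar\alpha)$ defining $X$ is equivalent to a boolean combination of atomic formulas (preserved by any $\mathcal{L}_H$-isomorphism) and scalar formulas $\zeta(s_1(\bar c,y),\ldots,s_N(\bar c,y))$ (preserved because each term $s_i(\bar c,\beta)$ takes a value in $\k^*$ that the isomorphism fixes). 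Hence $(\beta-\delta,\beta+\delta)\subseteq X^*$, and since ``having nonempty interior'' is first-order in the parameters $\bar c,\bar\alpha$, the elementarity $\Ga\prec\Ga^*$ transfers this back to $X\subseteq\Gamma$. The main obstacle is the careful two-sorted bookkeeping: Lemma~\ref{keyinterval} must be applied over the scalar field $\k^*$ of the elementary extension (not the original $\k$), so that the resulting $H$-triple isomorphism is an $\mathcal{L}_H$-isomorphism and Theorem~\ref{theq} delivers preservation of $\phi$; and a two-step saturation is needed in order to meet the hypothesis of Lemma~\ref{ovsdis2} and the cofinality hypothesis of Lemma~\ref{keyinterval} simultaneously.
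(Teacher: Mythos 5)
Your proof is correct and follows essentially the same route as the paper: both reduce (iii)~$\Rightarrow$~(i) to showing $X^*\subseteq\Gamma_{\k^*}$ in a saturated elementary extension via Lemma~\ref{ovsdis2}, both use the scalar-extension machinery and Lemma~\ref{scpr} to see that $\Gamma_{\k^*}$ is a closed $H$-triple over $\k^*$ sitting inside $\Ga^*$, and both produce the contradictory open interval via Lemma~\ref{keyinterval} together with the QE of Theorem~\ref{theq}. One small economy you missed: the ``two-step saturation'' you envision is unnecessary (and as phrased is slightly circular, since $\Gamma_{\k^*}$ depends on $\k^*$, which changes under further extension); $|\Gamma|^+$-saturation alone already yields the non-cofinality hypothesis of Lemma~\ref{keyinterval}, because the type expressing ``$0<\varepsilon<\Gamma_{\k^*}^{>}$'' is equivalent, by Hahn type, to ``$\varepsilon>0$ and $\psi^*(\varepsilon)>\psi(\gamma)$ for all $\gamma\in\Gamma^{\ne}$,'' a type with parameters in $\Gamma$ only.
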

\begin{proof} The direction (i) $\Rightarrow$ (ii) holds by Lemma~\ref{ovsdis1}. The direction (ii) $\Rightarrow$ (iii) is obvious. (These two implications don't need $X$ to be definable.) 

As to (iii) $\Rightarrow$ (i), assume 
$X$ has empty interior. Take a formula $\phi(y)$ over $\Ga$ in a single vector variable $y$ that defines the set $X$ in $\Ga$.
We use Theorem~\ref{theq} to arrange that $\phi(y)$ is a boolean combination of scalar formulas over $\Ga$ and atomic formulas over
$\Ga$.   
Take a $|\Gamma|^+$-saturated elementary extension 
$\Ga^*=\big((\Gamma^*, \psi^*,P^*),\k^*;\sc^*\!\big)$ of~$\Ga$, and let $X^*\subseteq \Gamma^*$ be defined by $\phi(y)$ in $\Ga^*$.
We identify $\Gamma_{\k^*}$ with $\k^*\Gamma\subseteq \Gamma^*$ in the usual way. 
We Claim~that 
$X^*\subseteq \Gamma_{\k^*}$. (This gives (i) by Lemma~\ref{ovsdis2}.) Consider the substructure $\Ga_{\k^*}=\big((\Gamma_{\k^*},\psi_{\k^*}, P_{\k^*}),\k^*;\sc^*\!\big)$ of $\Ga^*$; it has asymptotic integration by Lemma~\ref{scpr}. Let $X_{\k^*}\subseteq \Gamma_{\k^*}$ be defined in
$\Ga_{\k^*}$ by $\phi(y)$. Then
$X_{\k^*}=X^*\cap \Gamma_{\k^*}$, so our claim
amounts to $X^*=X_{\k^*}$. Suppose towards a contradiction that 
$\gamma^*\in X^*\setminus X_{\k^*}$. In particular,
$\gamma^*\in \Gamma^*\setminus \Gamma_{\k^*}$. 
Saturation yields an $\varepsilon\in \Gamma^*$ such that
$0 < \varepsilon < c^*\gamma$ for all positive $c^*$ in $\k^*$ and
all positive $\gamma\in \Gamma$, so $0 < \varepsilon < \Gamma_{\k^*}^{>}$, and thus
$\Gamma_{\k^*}^{>}$ is not coinitial in $(\Gamma^*)^{>}$.
Lemma~\ref{keyinterval} then yields a $\delta>0$ in $\Gamma^*$ such that all
$\gamma\in \Gamma^*$ with $|\gamma-\gamma^*|< \delta$ yield an isomorphism 
$$\big(\Gamma_{\k^*}\<\gamma^*\>, \psi_{\gamma^*}, P_{\gamma^*}\big)\ \cong\ \big(\Gamma_{\k^*}\<\gamma\>, \psi_{\gamma}, P_{\gamma}\big)\ \subseteq\ \big(\Gamma^*, \psi^*, P^*\big)
$$
of $H$-triples over $\k^*$ sending $\gamma^*$ to $\gamma$.
Hence $s(\gamma^*)=s(\gamma)$ for such $\gamma$ and any scalar-valued $\mathcal{L}_H$-term $s(y)$ over $\Ga$, and
so $\Ga^*\models \phi(\gamma)$ for those $\gamma$. Thus
the interval~$(\gamma^*-\delta, \gamma^*+\delta)$ in $\Gamma^*$
lies entirely in $X^*$, contradicting that $X^*$ is discrete in
$\Gamma^*$.   
\end{proof}

\section{Further Results about Closed $H$-couples}

\noindent
We briefly return to the {\em one-sorted\/} setting of $H$-couples (or $H$-triples) and give two easy applications of Theorem~\ref{hclQE}.   

\subsection*{Definable closure} Let $\Ga^*=(\Gamma^*, \psi^*,P^*)$ be a closed $H$-triple over $\k$. Then we have the notion of the {\em definable closure\/} of a set
$\Gamma\subseteq \Gamma^*$ in $\Ga^*$, and thus of such a set $\Gamma$ being {\em definably closed\/} in $\Ga^*$.
If $\Gamma\subseteq \Gamma^*$ is definably closed in $\Ga^*$, then $\Gamma$ is (the underlying set of)
a subgroup of $\Gamma^*$ with $\psi^*(\Gamma^{\ne})\subseteq \Gamma$, and thus we have 
an $H$-triple $(\Gamma, \psi, P)$ over $\k$ 
with $(\Gamma, \psi, P)\subseteq \Ga^*$.

\begin{prop}\label{dcl} Let $(\Gamma, \psi, P)$ be an $H$-triple over $\k$
with $(\Gamma, \psi, P)\subseteq \Ga$. Then:
$$\text{$\Gamma$ is definably closed in $(\Gamma^*, \psi^*,P^*)$}\quad \Longleftrightarrow\quad    
 \text{$(\Gamma, \psi)$ has asymptotic integration.}$$
\end{prop}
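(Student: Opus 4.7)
I would prove the two implications separately: ($\Rightarrow$) by an explicit uniqueness argument for antiderivatives, and ($\Leftarrow$) by combining Theorem~\ref{hclQE} with the structural analysis of simple extensions from Section~\ref{sec:simple}.

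\textbf{Direction ($\Rightarrow$).} Arguing the contrapositive, suppose $(\Gamma,\psi)$ lacks asymptotic integration. Then there is some $\beta\in \Gamma\setminus(\Gamma^{\ne})'$. Since $\Ga^*$ is closed it does have asymptotic integration, so one can choose $\alpha^*\in(\Gamma^*)^{\ne}$ with $(\alpha^*)' = \beta$. The crux is uniqueness of such $\alpha^*$: if $\alpha_1,\alpha_2\in (\Gamma^*)^{\ne}$ both satisfy $\alpha_i' = \beta$ and $\alpha_1\ne \alpha_2$, then subtracting yields
\[
\alpha_1-\alpha_2\ =\ \psi^*(\alpha_2)-\psi^*(\alpha_1),
\]
whence $[\alpha_1-\alpha_2]_\k = [\psi^*(\alpha_1)-\psi^*(\alpha_2)]_\k$, contradicting the basic inequality $[\psi^*(\alpha)-\psi^*(\gamma)]_\k<[\alpha-\gamma]_\k$ for distinct nonzero $\alpha,\gamma$ recorded in the subsection ``$H$-couples over ordered fields'' of Section~\ref{prelim}. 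So $\alpha^*$ is the unique nonzero solution of the formula $y+\psi(y)=\beta$, hence is definable from $\beta\in \Gamma$; but $\beta\notin(\Gamma^{\ne})'$ forces $\alpha^*\notin \Gamma$. Thus $\Gamma$ is not definably closed in $\Ga^*$.

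\textbf{Direction ($\Leftarrow$).} Assume $(\Gamma,\psi)$ has asymptotic integration. Given $\gamma^*\in \Gamma^*\setminus\Gamma$, I pass to a $|\Gamma|^+$-saturated elementary extension $\Ga^{**}$ of $\Ga^*$ and seek $\gamma^{**}\in \Gamma^{**}$ with $\gamma^{**}\ne\gamma^*$ having the same type over $\Gamma$. By Theorem~\ref{hclQE} it suffices to exhibit an isomorphism of $H$-triples over $\k$
\[
\big(\Gamma\<\gamma^*\>,\psi_{\gamma^*},P_{\gamma^*}\big)\ \longrightarrow\ \big(\Gamma\<\gamma^{**}\>,\psi_{\gamma^{**}},P_{\gamma^{**}}\big)
\]
that is the identity on $\Gamma$ and sends $\gamma^*$ to $\gamma^{**}$. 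The asymptotic-integration hypothesis is exactly what is needed to apply Proposition~\ref{cases} to $\gamma^*$, giving one of the four structural cases (a), (b), (c)$_n$, (d)$_n$. In cases (b), (c)$_n$, (d)$_n$, the argument used in the corresponding cases of the proof of Lemma~\ref{keyinterval} --- resting on Lemmas~\ref{lemb},~\ref{lemcn1},~\ref{lemdn} --- shows that $\gamma^{**}:=\gamma^*+\varepsilon$ works for \emph{any} nonzero $\varepsilon\in \Gamma^{**}$ with $[\varepsilon]_\k<[\beta_0]_\k$, and such $\varepsilon$ exists by saturation. Case (a) I would handle via Lemma~\ref{noextradaggers} directly: saturation produces $\gamma^{**}\ne\gamma^*$ in $\Gamma^{**}$ realizing the same cut in $\Gamma$ as $\gamma^*$ (and the same $P$-side), and Lemma~\ref{noextradaggers} then delivers the required isomorphism.

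\textbf{Main obstacle.} The one awkward point in ($\Leftarrow$) is that Lemma~\ref{keyinterval} is stated under a Hahn-type hypothesis on $(\Gamma^*,\psi^*)$ which is not part of the present assumptions, so I cannot simply quote it. However, only its proof, not its statement, is needed: the perturbation arguments in cases (b), (c)$_n$, (d)$_n$ flow from Lemmas~\ref{lemb},~\ref{lemcn1},~\ref{lemdn}, none of which assumes Hahn type, and case (a) reduces directly to Lemma~\ref{noextradaggers}. The real work is therefore a careful inspection of the proof of Lemma~\ref{keyinterval} to verify that Hahn type is not used in the subcases relevant here.
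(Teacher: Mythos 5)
Your argument is correct and matches the paper's, which simply states that each $\gamma\in\Gamma$ has a unique $\alpha\in(\Gamma^*)^{\ne}$ with $\alpha'=\gamma$; you supply the detail (the basic inequality $[\psi^*(\alpha)-\psi^*(\beta)]_\k<[\alpha-\beta]_\k$) that underlies that uniqueness. Fine.

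\textbf{Direction ($\Leftarrow$).} Here you take a genuinely different and substantially heavier route than the paper, and it has a gap. The paper does \emph{not} analyze arbitrary $\gamma^*\in\Gamma^*\setminus\Gamma$ via Proposition~\ref{cases}. Instead it iterates Lemma~\ref{extpsi} inside $\Gamma^*$ to build a continuous chain $(\Gamma_\lambda,\psi_\lambda,P_\lambda)$ with union an $H$-closure $(\Gamma^{\operatorname{c}},\psi^{\operatorname{c}},P^{\operatorname{c}})\subseteq\Gamma^*$; since $(\Gamma^{\operatorname{c}},\psi^{\operatorname{c}},P^{\operatorname{c}})\preceq(\Gamma^*,\psi^*,P^*)$ by Theorem~\ref{hclQE}, the definable closure of $\Gamma$ in $\Ga^*$ lies in $\Gamma^{\operatorname{c}}$, and it remains to show no $\gamma^{\operatorname{c}}\in\Gamma^{\operatorname{c}}\setminus\Gamma$ is definable over $\Gamma$. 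But each such $\gamma^{\operatorname{c}}$ lives in some $\Gamma_{\lambda+1}=\Gamma_\lambda\oplus\k\alpha_\lambda$, and the universal property in Lemma~\ref{extpsi} supplies an automorphism of $(\Gamma_{\lambda+1},\psi_{\lambda+1},P_{\lambda+1})$ over $\Gamma_\lambda$ moving $\alpha_\lambda$ (hence $\gamma^{\operatorname{c}}$) to a different element of the same quantifier-free type over $\Gamma$; by QE this is the same full type, so $\gamma^{\operatorname{c}}\notin\operatorname{dcl}(\Gamma)$. This completely sidesteps Proposition~\ref{cases}, the perturbation lemmas, and all Hahn-type worries.

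\textbf{The gap in your version.} In Case~(a) you write that ``saturation produces $\gamma^{**}\ne\gamma^*$ in $\Gamma^{**}$ realizing the same cut in $\Gamma$ as $\gamma^*$ (and the same $P$-side), and Lemma~\ref{noextradaggers} then delivers the required isomorphism.'' This is not enough. In the non-Hahn-type situation, Cases~4--6 in the proof of Lemma~\ref{noextradaggers} can occur, and there the isomorphism type of $\Gamma\<\gamma^*\>$ over $\Gamma$ is \emph{not} determined by the cut realized in $\Gamma$ together with the $P$-side: one must also match the value $\psi^{**}(\gamma^{**}-\alpha)$ at the new archimedean class (Cases~5 and~6 use saturation precisely to achieve $\gamma_*^\dagger=\gamma^\dagger$, a constraint that the cut alone leaves free). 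So ``same cut'' does not yield the desired isomorphism, and hence does not yield $\operatorname{tp}(\gamma^{**}/\Gamma)=\operatorname{tp}(\gamma^*/\Gamma)$. To repair this you would need the same perturbation device used in (b), (c)$_n$, (d)$_n$: choose $\varepsilon$ with $[\varepsilon]_\k$ strictly below $[\gamma^*-\alpha]_\k$ for \emph{every} $\alpha\in\Gamma$ and argue, using the fact (valid without Hahn type) that equal archimedean classes force equal $\psi$-values, that $\gamma^*\pm\varepsilon$ realizes the same quantifier-free type. Even then you must check the $P$-predicate is preserved under the perturbation (this needs attention near the boundary of $P^{**}$, and the sign of the perturbation matters), a point you do not address at all. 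These are genuine holes, not cosmetic ones; your observation that Lemmas~\ref{lemb}, \ref{lemcn1}, \ref{lemdn} themselves do not assume Hahn type is correct, but Case~(a) is exactly where Hahn type was doing real work in Lemma~\ref{keyinterval}, and you have not replaced it.
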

\begin{proof} For $\Rightarrow$, note that
for every $\gamma\in \Gamma$ there is a unique $\alpha\in (\Gamma^*)^{\ne}$ with
$\gamma=\alpha'$.  

For the converse, assume that $(\Gamma, \psi)$ has asymptotic integration  (so $P=\Psi^{\downarrow}$).
Iterating the construction of Lemma~\ref{extpsi} we obtain an increasing continuous chain $$\big((\Gamma_\lambda,\psi_\lambda, P_\lambda)\big)_{\lambda<\nu} \qquad(\text{with $\nu$ an ordinal)}$$ of $H$-triples contained in $(\Gamma^*, \psi^*, P^*)$ as substructures, with $(\Gamma_0, \psi_0, P_0)=(\Gamma, \psi, P)$, such that every $(\Gamma_\lambda,\psi_\lambda, P_\lambda)$ has asymptotic integration with
$P_{\lambda}$ being the downward closure of $\Psi_0$ in $\Gamma_{\lambda}$, and such that the union
$$(\Gamma^{\operatorname{c}}, \psi^{\operatorname{c}}, P^{\operatorname{c}})\ :=\ \bigcup_{\lambda< \nu}(\Gamma_\lambda,\psi_\lambda, P_\lambda)$$
is closed. 
The reference to Lemma~\ref{extpsi} means that for 
$\lambda< \lambda+1<\nu$ we have~$\Gamma_{\lambda+1}=\Gamma_{\lambda}\oplus \k\alpha_{\lambda}$ with $\alpha_{\lambda}>0$ and $\alpha_{\lambda}^\dagger\in P_{\lambda}\setminus \psi_{\lambda}(\Gamma_{\lambda}^{\ne})$. That the chain is continuous means that $(\Gamma_\mu,\psi_\lambda, P_\mu)=\bigcup_{\lambda<\mu}(\Gamma_\lambda,\psi_\lambda, P_\lambda)$
for limit ordinals $\mu< \nu$. Any such~$(\Gamma^{\operatorname{c}}, \psi^{\operatorname{c}}, P^{\operatorname{c}})$ is clearly an 
$H$-closure of $(\Gamma, \psi, P)$, which explains the
superscript~$\operatorname{c}$.  Since $(\Gamma^{\operatorname{c}}, \psi^{\operatorname{c}}, P^{\operatorname{c}})\preceq (\Gamma^*,\psi^*,P^*)$, any element of $\Gamma^*$ that is definable in~$\Ga^*$ over $\Gamma$ must lie in $\Gamma^{\operatorname{c}}$. So let $\gamma^{\operatorname{c}}\in \Gamma^{\operatorname{c}}\setminus \Gamma$; to show that then $\gamma^{\operatorname{c}}$ is not definable in~$\Ga^*$ over $\Gamma$ it suffices by Theorem~\ref{hclQE}
that $\gamma^{\operatorname{c}}$ realizes in $\Ga^*$ the same quantifier-free type over $\Gamma$ as some $\gamma\in \Gamma^{\operatorname{c}}$ with $\gamma\ne \gamma^{\operatorname{c}}$. 
Take $\lambda$ with $\lambda< \lambda+1<\nu$ such that~$\gamma^{\operatorname{c}}\in \Gamma_{\lambda+1}\setminus \Gamma_{\lambda}$. Then
$$\gamma^{\operatorname{c}}\ =\ \gamma_{\lambda}+d\alpha_{\lambda} \qquad (\gamma_{\lambda}\in \Gamma_{\lambda},\ d\in \k^\times).$$
Take any $\alpha\ne \alpha_{\lambda}$ in $\Gamma_{\lambda+1}^{>}$ such that $[\alpha]_{\k}=[\alpha_{\lambda}]_{\k}$. Then $\gamma^{\operatorname{c}}\ne 
\gamma:=\gamma_{\lambda}+d\alpha$.  Lemma~\ref{extpsi}  gives an automorphism $\sigma$ of $(\Gamma_{\lambda+1}, \psi_{\lambda+1}, P_{\lambda+1})$ over $\Gamma_{\lambda}$ with $\sigma(\alpha)=\alpha_{\lambda}$, so
$\sigma(\gamma^{\operatorname{c}})=\gamma$. Thus $\gamma^{\operatorname{c}}$ and $\gamma$ realize in $\Ga^*$ the same quantifier-free type over $\Gamma$.
\end{proof}



\subsection*{A closure property of closed $H$-couples} We show here how \cite[Properties~A and~B]{AvdD} and its variant
 \cite[Section~9.9]{ADH} follow from our QE.

\medskip\noindent
Let $(\Gamma, \psi)$ be an $H$-couple over $\k$. We extend
${\psi\colon \Gamma^{\ne}\to \Gamma}$ to a function $\psi\colon \Gamma_{\infty} \to \Gamma_{\infty}$ by $\psi(0)=\psi(\infty):= \infty$. For $\alpha_1,\dots,\alpha_n\in \Gamma$, $n\ge 1$, we define   
$\psi_{\alpha_1,\dots,\alpha_n}\colon \Gamma_{\infty}\to \Gamma_{\infty}$ by
recursion on $n$: 
$$\psi_{\alpha_1}(\gamma)\ :=\ \psi(\gamma-\alpha_1), \qquad
\psi_{\alpha_1,\dots, \alpha_{n}}(\gamma)\ :=\ \psi\big(\psi_{\alpha_1,\dots, \alpha_{n-1}}(\gamma)-\alpha_{n}\big) 
\text{ for $n\ge 2$.}$$ 
Let $D$ be a subset of an ordered abelian group $\Delta$. Call $D$  {\em bounded\/} if $D\subseteq [p,q]$ for
some $p\le q$ in $\Delta$, and otherwise, call $D$ {\em unbounded}. (These notions and the next one are with respect to the ambient $\Delta$.)
A {\bf (convex) component of $D$} is by 
definition a nonempty convex subset $S$ of $\Delta$ such that 
$S\subseteq D$ and $S$ is maximal with these properties.  \index{component!convex}
The components of $D$ partition the set $D$: for $d\in D$ the unique component of 
$D$ containing $d$ is 
$$\big\{\gamma\in D^{\le d}:\,[\gamma,d]\subseteq D\big\} \cup \big\{\gamma\in D^{\ge d}:\,
[d,\gamma]\subseteq D\big\}.$$ 
Let
$n\ge 1$, and let $\alpha$ be a sequence $\alpha_1,\dots, \alpha_n$ from $\Gamma$. 
We set $$D_{\alpha}\ :=\ \big\{\gamma\in \Gamma:\,\psi_{\alpha}(\gamma)\ne \infty\big\}.$$
Thus 
\begin{align*}
D_{\alpha}\	&=\ \Gamma\setminus \{\alpha_1\} &&\text{for $n=1$, and} \\
D_{\alpha}\	&=\ \big\{\gamma\in D_{\alpha'}:\,\psi_{\alpha'}(\gamma)\ne \alpha_n\big\} &&\text{for $n>1$ and $\alpha'=\alpha_1,\dots, \alpha_{n-1}$.} 
\end{align*}
One checks easily by induction on $n$ that for distinct 
$\gamma, \gamma'\in  D_{\alpha}$,
$$\psi_{\alpha}(\gamma)- \psi_{\alpha}(\gamma')\ =\ o(\gamma-\gamma').$$ 
Let  $n\ge 1$, let $\alpha_1,\dots, \alpha_n\in \Gamma$, set
$\alpha:=(\alpha_1,\dots,\alpha_n)$, and let 
$c_1,\dots, c_n\in \k$.

\medskip\noindent
The next lemma is \cite[Lemma~9.9.3]{ADH}, generalized from 
$\k=\Q$ to arbitrary $\k$, with the same (easy) proof. 

\begin{lemma}\label{strinc} The function 
$$\gamma\mapsto \gamma + c_1\psi_{\alpha_1}(\gamma) + \cdots + c_n\psi_{\alpha_1,\dots,\alpha_n}(\gamma)\ :\,D_{\alpha} \to \Gamma$$
is strictly increasing. Moreover, this function has the intermediate value property on every component of $D_{\alpha}$.
\end{lemma}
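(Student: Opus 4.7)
For strict monotonicity, my plan is to read off the conclusion from the iterated $o$-estimate recorded in the paragraph immediately preceding the lemma: for distinct $\gamma, \gamma' \in D_\alpha$ we have $\psi_\alpha(\gamma) - \psi_\alpha(\gamma') = o(\gamma - \gamma')$ in the $\k$-archimedean sense, i.e.\ $[\psi_\alpha(\gamma) - \psi_\alpha(\gamma')]_\k < [\gamma - \gamma']_\k$, and the analogous estimate holds with $\alpha=(\alpha_1,\dots,\alpha_n)$ replaced by any prefix $(\alpha_1,\dots,\alpha_j)$. Thus for $\gamma < \gamma'$ in $D_\alpha$,
$$F(\gamma') - F(\gamma)\ =\ (\gamma' - \gamma)\ +\ \sum_{j=1}^n c_j\bigl[\psi_{\alpha_1,\dots,\alpha_j}(\gamma') - \psi_{\alpha_1,\dots,\alpha_j}(\gamma)\bigr],$$
and since scaling by $c_j \in \k$ preserves $\k$-archimedean class, each summand has strictly smaller $\k$-archimedean class than $\gamma'-\gamma$. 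Hence the total sum is dominated in absolute value by $\gamma' - \gamma$, which forces $F(\gamma') > F(\gamma)$.

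For the intermediate value property on a convex component $C$ of $D_\alpha$, I would argue by induction on $n$, combining the continuity of each $\psi_{\alpha_1,\dots,\alpha_j}$ on $C$ (a consequence of the same $o$-estimate) with a successive-approximation scheme. Given $p < q$ in $C$ and $v \in \Gamma$ with $F(p) \le v \le F(q)$, I would set $\gamma^{(0)} := p$ and $\gamma^{(k+1)} := \gamma^{(k)} + (v - F(\gamma^{(k)}))$. A short computation using the first paragraph shows
$$v - F(\gamma^{(k+1)})\ =\ -\sum_{j=1}^n c_j\bigl[\psi_{\alpha_1,\dots,\alpha_j}(\gamma^{(k+1)}) - \psi_{\alpha_1,\dots,\alpha_j}(\gamma^{(k)})\bigr],$$
so the error $v - F(\gamma^{(k)})$ strictly descends in $[\Gamma]_\k$ at each step, while the $\gamma^{(k)}$ remain trapped between $p$ and $q$; the hope is that this process either terminates or realizes $v$ in $F(C)$.

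The main obstacle will be to ensure that this iteration actually produces a $\gamma \in C$ with $F(\gamma) = v$: in an arbitrary ordered abelian group a descending chain of $\k$-archimedean classes need not stabilize and limits need not exist. To close the gap I expect to exploit the \emph{finite} nesting depth $n$ together with the detailed structure of $D_\alpha$: each additional $\psi$-layer can contribute only one new level of archimedean cancellation, so careful bookkeeping of the finitely many classes $[\psi_{\alpha_1,\dots,\alpha_j}(\gamma^{(k)})]_\k$ that can arise along the orbit should force the iteration to stabilize after finitely many steps. If needed, one can alternatively realize the cut $\{\gamma \in C : F(\gamma) < v\}$ in an elementary (or $H$-closure) extension by the extension lemmas of Section~\ref{ext}, and pull the witness back via the quantifier-elimination Theorem~\ref{hclQE}.
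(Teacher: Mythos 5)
Your argument for strict monotonicity is correct; it is just the displayed $o$-estimate, read for each prefix $(\alpha_1,\dots,\alpha_j)$ and combined via $[c_j\delta]_\k=[\delta]_\k$ for $c_j\in\k^\times$.

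The IVP half, however, has a genuine gap, and the suggested repairs do not close it. First, the claim that the iterates ``remain trapped between $p$ and $q$'' is unjustified and can already fail at $k=1$: nothing you have shown rules out $\gamma^{(1)}=p+(v-F(p))>q$, and then $\gamma^{(1)}$ need not lie in $D_\alpha$ at all, so $F(\gamma^{(1)})$ and hence $\gamma^{(2)}$ are undefined. (One repairs this by observing that $p+(v-F(p))$ and $q-(F(q)-v)$ cannot both lie outside $[p,q]$, because $(v-F(p))+(F(q)-v)=F(q)-F(p)<2(q-p)$ by the $o$-estimate; start the iteration from whichever endpoint keeps you in $[p,q]\subseteq C$.) Second, and more seriously, the termination step is missing: as you yourself note, a strictly $[\Gamma]_\k$-descending chain need not stabilize, and the ``finite nesting depth / finitely many archimedean classes'' heuristic does not supply the missing step. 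What does the job is a structural fact you never record: on $D_\alpha$ each $\psi_{\alpha_1,\dots,\alpha_j}$ factors through $\psi_{\alpha_1}$ (since $\psi_{\alpha_1,\dots,\alpha_j}(\gamma)=\psi\big(\psi_{\alpha_1,\dots,\alpha_{j-1}}(\gamma)-\alpha_j\big)$), so $h:=F-\id$ is constant on $\psi_{\alpha_1}$-level sets. With WLOG $\alpha_1<p<q$ and $\gamma^{(1)}=p+(v-F(p))\in[p,q]$, one then has $[\gamma^{(2)}-\gamma^{(1)}]_\k<[\gamma^{(1)}-p]_\k\le[\gamma^{(1)}-\alpha_1]_\k$, hence $\gamma^{(2)}-\alpha_1>0$ with $[\gamma^{(2)}-\alpha_1]_\k=[\gamma^{(1)}-\alpha_1]_\k$, hence $\psi_{\alpha_1}(\gamma^{(2)})=\psi_{\alpha_1}(\gamma^{(1)})$, $h(\gamma^{(2)})=h(\gamma^{(1)})$, and $F(\gamma^{(2)})=v$ with $\gamma^{(2)}\in C$: the iteration halts after two steps, with no transfinite limit needed. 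Finally, the fallback via Theorem~\ref{hclQE} is not available here: Lemma~\ref{strinc} is stated for an arbitrary $H$-couple over $\k$ and is invoked in the proof of Proposition~\ref{cac} for the ambient extension $(\Gamma^*,\psi^*)$, which carries no $H$-triple structure and is not assumed closed, whereas the QE theorem is only about closed $H$-triples.
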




\begin{prop}\label{cac} Suppose $(\Gamma, \psi)$ is closed,
$(\Gamma^*,\psi^*)$ is an $H$-couple over $\k$
extending $(\Gamma,\psi)$, and $\gamma\in \Gamma^*$ is such that
\begin{align*} \psi^*_{\alpha_1,\dots,\alpha_n}(\gamma)&\ne \infty\quad \text{\textup{(}so
$\psi^*_{\alpha_1,\dots,\alpha_i}(\gamma)\ne \infty$ for $i=1,\dots,n$\textup{)},}\ \text{ and}\\
\gamma + c_1\psi^*_{\alpha_1}(\gamma) &+ \cdots + c_n\psi^*_{\alpha_1,\dots,\alpha_n}(\gamma)\in \Gamma. \end{align*}
Then $\gamma\in \Gamma$.
\end{prop}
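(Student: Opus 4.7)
The plan is to combine the quantifier elimination of Theorem~\ref{hclQE} with the injectivity implicit in Lemma~\ref{strinc}. Set $f(y) := y + c_1\psi_{\alpha_1}(y) + \cdots + c_n\psi_{\alpha_1,\dots,\alpha_n}(y)$, a term of $\mathcal{L}_{\k}$ with parameters $\alpha_1,\dots,\alpha_n \in \Gamma$ and $c_1,\dots,c_n\in\k$, and let $\delta := f(\gamma)\in\Gamma$. I want to locate a preimage of $\delta$ under $f$ inside $\Gamma$ and then invoke injectivity to conclude $\gamma = $ that preimage.

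First I would view $(\Gamma,\psi)$ as a closed $H$-triple $(\Gamma, \psi, P)$ with its unique $H$-cut $P := \Psi$, and equip $(\Gamma^*,\psi^*)$ with the $H$-cut $P^* := (\Psi^*)^{\downarrow}$. The key bookkeeping step is verifying that $(\Gamma,\psi,P) \subseteq (\Gamma^*,\psi^*,P^*)$ as $\mathcal{L}_{\k}$-substructures, i.e., $P^*\cap\Gamma = \Psi$. The inclusion $\supseteq$ is trivial; for $\subseteq$, use $\Gamma = \Psi \sqcup (\Gamma^{>})'$ (a disjoint union since $(\Gamma,\psi)$ is closed), and observe that if $\beta' \in (\Gamma^{>})'$ were $\leq \psi^*(\alpha)$ for some $\alpha\in(\Gamma^*)^{\ne}$, then (AC3) in $\Gamma^*$ would yield $\beta' = \beta + \psi^*(\beta) > \psi^*(\alpha)$, a contradiction.

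Next, using Corollary~\ref{hcl} I would take an $H$-closure $(\Gamma^{**}, \psi^{**}, P^{**})$ of $(\Gamma^*, \psi^*, P^*)$; transitively $(\Gamma,\psi,P)$ is a substructure of the closed $H$-triple $(\Gamma^{**}, \psi^{**}, P^{**})$, and both triples are on the same side (since $P^{**}\cap\Gamma = P$ and $0\in\Gamma$, they agree on whether $0\in P$). Theorem~\ref{hclQE} together with the corollary identifying the completions of $T_{\k}$ then upgrades this substructure inclusion to an elementary one: $(\Gamma,\psi,P) \preceq (\Gamma^{**}, \psi^{**}, P^{**})$.

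To conclude: the sentence $\exists y\,[f(y) = \delta]$ with parameters from $\Gamma$ is witnessed by $\gamma$ in $\Gamma^{**}$, so by elementarity it is witnessed by some $\tilde\gamma \in \Gamma$. Since $\delta \ne \infty$, $f$ takes finite values at both $\gamma$ and $\tilde\gamma$, so both lie in $D^{**}_{\alpha}$. Lemma~\ref{strinc} applied in $(\Gamma^{**}, \psi^{**})$ says that $f$ is strictly increasing, and therefore injective, on $D^{**}_{\alpha}$; hence $\tilde\gamma = \gamma$ and $\gamma\in\Gamma$. The main obstacle, beyond the appeal to QE, is this small but essential verification aligning the $P$-predicates along $\Gamma \subseteq \Gamma^* \subseteq \Gamma^{**}$; once that is in hand the argument runs on autopilot.
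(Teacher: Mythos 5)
Your proof is correct and takes essentially the same route as the paper's: pass to a closed extension, use Theorem~\ref{hclQE} (via model-completeness) to find $\tilde\gamma\in\Gamma$ with $f(\tilde\gamma)=\delta$, then conclude $\tilde\gamma=\gamma$ by the strict monotonicity in Lemma~\ref{strinc}. You spell out the $P$-predicate bookkeeping (checking $P^*\cap\Gamma=\Psi$ via $\Gamma=\Psi\sqcup(\Gamma^{>})'$) which the paper leaves tacit, but this is the same argument; the aside about the two theories agreeing on whether $0\in P$ is harmless but unnecessary, since model-completeness already gives elementarity for any substructure inclusion of models.
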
 
\begin{proof} By extending $(\Gamma^*, \psi^*)$ we arrange it
to be closed. Then by Theorem~\ref{hclQE},  
$(\Gamma,\psi,\Psi)\preceq (\Gamma^*, \psi^*, \Psi^*)$, and so
we have $\beta\in \Gamma$ such that 
$\psi_{\alpha_1,\dots,\alpha_n}(\beta)\ne \infty$ and
$$\beta + c_1\psi_{\alpha_1}(\beta) + \cdots + c_n\psi_{\alpha_1,\dots,\alpha_n}(\beta)=\gamma + c_1\psi^*_{\alpha_1}(\gamma) + \cdots + c_n\psi^*_{\alpha_1,\dots,\alpha_n}(\gamma).$$ It remains to note that then $\beta=\gamma$ by Lemma~\ref{strinc}.
\end{proof}

\section{Final Remarks} 

\noindent
In \cite{AvdD} we adopted the $2$-sorted setting and ``Hahn type'' 
at the outset, and only observed in its last section that much went through in a one-sorted setting without Hahn type assumption and just rational scalars. Here we have reversed this order, since our proof of Theorem~\ref{thm} required various facts, such as Lemmas~\ref{cutinheritance} and \ref{keyinterval}, about ``one-sorted'' closed $H$-couples over an arbitrary ordered scalar field that are not readily available in \cite{AvdD}. 

There remain several parts in \cite{AvdD} that we have not
tried to cover or extend here. These concern
the definable closure of an $H$-couple in an ambient closed $H$-couple, the uniqueness of 
$H$-closures, the well-orderedness of $\Psi$ for finitely generated $H$-couples, the weak o-minimality of  closed 
$H$-couples, and the local o-minimality and o-minimality at infinity of models of $T_H$.
We alert the reader that our terminology (and notations) concerning asymptotic couples 
 have evolved since \cite{AvdD}, and are now in line with \cite{ADH}, 
and so comparisons with the material here and in \cite{AvdD} require careful attention to the exact meaning of words. 

We do intend to treat some of these topics in a follow-up, since our revisit  also uncovered errors in the 
alleged proofs of weak o-minimality and local o-minimality in~\cite{AvdD}. These can be corrected using the
present paper, but this is not entirely a routine matter.

\end{document}